\newlength{\leftstackrelawd}
\newlength{\leftstackrelbwd}
\def\leftstackrel#1#2{\settowidth{\leftstackrelawd}%
	{${{}^{#1}}$}\settowidth{\leftstackrelbwd}{$#2$}%
	\addtolength{\leftstackrelawd}{-\leftstackrelbwd}%
	\leavevmode\ifthenelse{\lengthtest{\leftstackrelawd>0pt}}%
	{\kerneln-.5\leftstackrelawd}{}\mathrel{\mathop{#2}\limits^{#1}}}
\newcommand{\bdd}[1]{ \boldsymbol{#1} }
\newcommand{\mathbbb}[1]{\pmb{\mathbb{#1}}}
\newcommand{\unitvec}[1]{\bdd{#1}}
\def\@tvsp{\mathchoice{{}\mkern-4.5mu}{{}\mkern-4.5mu}{{}\mkern-2.5mu}{}}
\def\ltrivert{\left|\@tvsp\left|\@tvsp\left|}
\def\rtrivert{\right|\@tvsp\right|\@tvsp\right|}
\crefname{hypothesis}{Hypothesis}{Hypotheses}
\title{Computing $H^2$-conforming finite element approximations without having to implement $C^1$-elements \thanks{Submitted to the editors DATE. \funding{The second author acknowledges that this material is based upon work supported by the National Science Foundation under Award No. DMS-2201487. }}}
\author{Mark Ainsworth\thanks{ Division of Applied Mathematics, Brown University, Providence, RI
		(\email{mark\_ainsworth@brown.edu})}. \and Charles Parker \thanks{ Mathematical Institute, University of Oxford, Andrew Wiles Building, Woodstock Road, Oxford OX2 6GG, UK  (\email{charles.parker@maths.ox.ac.uk})}}
\DeclareMathOperator{\grad}{\mathbf{grad}}
\DeclareMathOperator{\dive}{div}
\DeclareMathOperator{\rot}{rot}
\let\orgdescriptionlabel\descriptionlabel
\renewcommand*{\descriptionlabel}[1]{%
	\let\orglabel\label
	\let\label\@gobble
	\phantomsection
	\edef\@currentlabel{#1}%
	\let\label\orglabel
	\orgdescriptionlabel{#1}%
}
\begin{document}

\maketitle

\begin{abstract}		
	We develop a method to compute the $H^2$-conforming finite element approximation to planar fourth order elliptic problems without having to implement $C^1$ elements. The algorithm consists of replacing the original $H^2$-conforming scheme with pre-processing and post-processing steps that require only an $H^1$-conforming Poisson type solve and an inner Stokes-like problem that again only requires at most $H^1$-conformity. We then demonstrate the method applied to the Morgan-Scott elements with three numerical examples.
\end{abstract}

\begin{keywords}
  $H^2$-conforming finite elements, $C^1$ finite elements, Kirchhoff plate
\end{keywords}

\begin{MSCcodes}
	65N30, 65N12
\end{MSCcodes}

\section{Introduction}
\label{sec:intro}

Conforming Galerkin finite element schemes inherit the stability properties of the underlying continuous problem. As a consequence, they provide stable approximations to a range of problem, such as structural mechanics applications, and deliver optimal approximations measured in an energy norm. Classical finite element texts \cite{Cia02} abound with examples of elements that provide $H^1$-conforming schemes (for second order problems such as the Lam\'e-Navier equations of linear elasticity) and, to a lesser extent, $H^2$-conforming schemes (for fourth order problems such as the Kirchhoff plate). 

Nevertheless, while $H^1$-conforming schemes are routinely implemented in finite element packages, $H^2$-conforming schemes are a comparative rarity. For example, Firedrake \cite{FiredrakeUserManual} and scikit-fem \cite{skfem2020} only offer the lowest order (degree five) Argyris element and FreeFEM \cite{FreeFEM} provides only the Hsieh-Clough-Tocher (HCT) element, while other major packages offer no capability for $H^2$-conforming approximation at all. Part of the reason for the $H^1$-conforming case discrepancy stems from the fact that $H^2$-conforming elements require derivative degrees of freedom that have more complicated transformation properties compared with pointwise values needed for the $H^1$-conforming case. This also means that $H^2$-conforming elements require higher order basis functions than their $H^1$-conforming counterparts. 

On the theoretical side too, there seems to be a desire to avoid $C^1$-conforming elements through the use of mixed, discontinuous Galerkin, or non-conforming finite element schemes, etc. that can be implemented using more readily available elements. Although such schemes can often be shown to be effective, they come at a price such as sacrificing the stability of the original problem in favor of an indefinite mixed finite element scheme or require the selection of appropriate penalty parameters. Either way, one ends up with a non-conforming approximation of the original $H^2$-conforming problem. While such artifacts need not present a problem \emph{per se} in the context of approximating a given equation, issues can arise when the schemes form part of larger multi-physics applications where coupling arises through terms that involve post-processing of what should be $H^2$-conforming variables or where the stability of the continuous problem is lost through the use of a mixed or non-conforming scheme. 

The principled approach to these issues is to insist on using $H^2$-conforming schemes that preserve the conformity, stability, and structure of the original problem. However, such a position fails to recognize that there are non-trivial difficulties in implementing such schemes that has fueled the large body of research on avoiding $H^2$-conformity, quite apart from the fact that few codes cater for higher order smoothness. 

The current work seeks the best of both worlds. Namely, \emph{we show how to compute the actual $H^2$-conforming approximation itself without having to implement $C^1$-conforming elements} using only approximation schemes that are routinely available in existing finite element packages. The attractions of such an approach are clear: one preserves the stability and conformity of the original formulation while enjoying the possibility to utilize existing software packages. Of course, there is no free lunch: our scheme replaces the original $H^2$-conforming scheme by a sequence of pre-processing and
post-processing steps that require only an $H^1$-conforming Poisson type solve, combined with an inner Stokes-like problem that again only requires at most $H^1$-conformity. 

After describing the problem setting in \cref{sec:problem-setting}, we explain the method first in a simplified setting in \cref{sec:simple-gammacs-connected} before turning to the general case in \cref{sec:gen-boundary}. Implementation aspects of the Stokes-like solve are addressed in \cref{sec:implementation-numerics} where we present several numerical examples. Concluding remarks appear in \cref{sec:conclusion}.

\section{Problem setting}
\label{sec:problem-setting}

Let $\Omega \subset \mathbb{R}^2$ be a simply-connected polygonal domain whose boundary  $\Gamma := \partial \Omega$, labeled as in \cref{fig:domain-example}, is partitioned into disjoint sets $\Gamma_c$, $\Gamma_s$, and $\Gamma_f$ consisting of open edges of $\Gamma$ with $|\Gamma_c \cup \Gamma_s| > 0$. We consider variational problems that take the form: 
\begin{align}
	\label{eq:biharmonic-primal-variational}
	w \in H^2_{\Gamma}(\Omega) : \qquad a(\grad w, \grad v) = F(v) \qquad \forall v \in H^2_{\Gamma}(\Omega),
\end{align}
where $F(\cdot)$ is a bounded linear functional on the space
\begin{align}
	\label{eq:htwogamma-def}
	H^2_{\Gamma}(\Omega) := \{ v \in H^2(\Omega) : v|_{\Gamma_c \cup \Gamma_s} = 0 \ \ \text{and} \ \ \partial_n v|_{\Gamma_c} = 0 \}.
\end{align}
We assume that the bilinear form $a(\cdot, \cdot)$ is bounded and positive on $\bdd{H}^1(\Omega)$ and coercive on $\grad H^2_{\Gamma}(\Omega)$; i.e. there exist positive constants $M > 0$ and $\alpha > 0$ satisfying
\begin{subequations}
	\label{eq:a-bounded-elliptic}
	\begin{alignat}{2}
		\label{eq:a-bounded}
		|a(\bdd{\theta}, \bdd{\psi})| &\leq M \|\bdd{\theta} \|_1 \| \bdd{\psi} \|_1 \qquad & &\forall \bdd{\theta}, \bdd{\psi} \in \bdd{H}^1(\Omega), \\
		\label{eq:a-positive}
		a(\bdd{\theta}, \bdd{\theta}) & \geq 0 \qquad & &\forall \bdd{\theta} \in \bdd{H}^1(\Omega), \\
		\label{eq:a-elliptic}
		a(\grad u, \grad u) &\geq \alpha \|u\|_2^2 \qquad & & \forall u \in H^2_{\Gamma}(\Omega), 
	\end{alignat}
\end{subequations}
where $\|\cdot\|$ and $\|\cdot\|_s$ denote the $L^2(\Omega)$ and $H^s(\Omega)$ norms for $s \in \mathbb{N}$. Under these conditions, the Lax-Milgram lemma can be used to show problem \cref{eq:biharmonic-primal-variational} is well-posed.

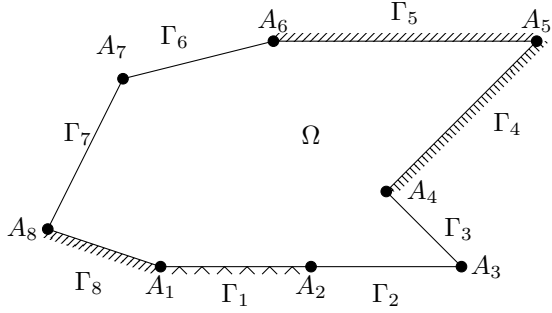
\begin{figure}[htb]
	\centering
	\begin{tikzpicture}		
		\coordinate (A1) at (-1, 0);
		\coordinate (A2) at (1, 0);
		\coordinate (A3) at (3, 0);
		\coordinate (A4) at (2, 1);
		\coordinate (A5) at (4, 3);
		\coordinate (A6) at (0.5, 3);
		\coordinate (A7) at (-1.5, 2.5);
		\coordinate (A8) at (-2.5, 0.5);
		\coordinate (Olabel) at (1, 1.75);
		
		\tikzstyle{ground}=[fill,pattern=north east 	lines,draw=none,minimum width=0.75cm,minimum height=0.3cm,inner sep=0pt,outer sep=0pt]
		
		\draw ($(A1) + (0.15, -0.1)$) -- ($(A1) + (0.25, 0)$) -- ($(A1) + (0.35, -0.1)$);
		\draw ($(A1) + (0.45, -0.1)$) -- ($(A1) + (0.55, 0)$) -- ($(A1) + (0.65, -0.1)$);
		\draw ($(A1) + (0.75, -0.1)$) -- ($(A1) + (0.85, 0)$) -- ($(A1) + (0.95, -0.1)$);
		\draw ($(A1) + (1.05, -0.1)$) -- ($(A1) + (1.15, 0)$) -- ($(A1) + (1.25, -0.1)$);
		\draw ($(A1) + (1.35, -0.1)$) -- ($(A1) + (1.45, 0)$) -- ($(A1) + (1.55, -0.1)$);
		\draw ($(A1) + (1.65, -0.1)$) -- ($(A1) + (1.75, 0)$) -- ($(A1) + (1.85, -0.1)$);
		
		\draw[ground] (A8) -- (A1) -- ($(A1) + (-0.1,-0.1)$) -- ($(A8) + (-0.1,-0.1)$) -- (A8);
		
		\draw[pattern=north west lines,draw=none,minimum width=0.75cm,minimum height=0.3cm,inner sep=0pt,outer sep=0pt] (A4) -- (A5) -- ($(A5) + (0.17, 0)$) -- ($(A4) + (0.17,0)$) -- (A4);
		
		\draw[pattern=north east lines,draw=none,minimum width=0.75cm,minimum height=0.3cm,inner sep=0pt,outer sep=0pt] (A5) -- (A6) -- ($(A6) + (0, 0.1)$) -- ($(A5) + (0,0.1)$) -- (A5);
		
		\draw (A8) -- (A1) -- (A2);
		\draw (A4) -- (A5) -- (A6);
		\draw (A2) -- (A3) -- (A4);
		\draw (A6) -- (A7) -- (A8) -- (A8);
		
		\filldraw (A1) circle (2pt) node[align=center,below]{${A}_1$};
		\filldraw (A2) circle (2pt) node[align=center,below]{${A}_2$};	
		\filldraw  (A3) circle (2pt) node[align=center,right]{${A}_3$};
		\filldraw (A4) circle (2pt) node[align=center,right]{};
		\filldraw (A5) circle (2pt) node[align=center,above]{${A}_5$};
		\filldraw (A6) circle (2pt) node[align=center,above]{${A}_6$};
		\filldraw (A7) circle (2pt) node[align=center,above]{};
		\filldraw (A8) circle (2pt) node[align=center,left]{${A}_8$};
		
		\draw ($(A4) + (0.15,0)$)  node[align=center,right]{${A}_4$};
		\draw ($(A7) + (-0.15,0.15)$)  node[align=center,above]{${A}_7$};

		\draw ($($(A1)!0.5!(A2)$) + (0, -0.1) $) node[align=center,below]{$\Gamma_1$};
		
		\draw ($($(A2)!0.5!(A3)$) + (0, -0.1) $) node[align=center,below]{$\Gamma_2$};
		
		\draw ($($(A3)!0.5!(A4)$) + (0.15, 0) $) 
		node[align=center,right]{$\Gamma_3$};
		
		\draw ($($(A4)!0.5!(A5)$) + (0.3, -0.1) $) node[align=center,right]{$\Gamma_4$};
		
		\draw ($($(A5)!0.5!(A6)$) + (0.0, 0.1) $) node[align=center,above]{$\Gamma_5$};
		
		\draw ($($(A6)!0.5!(A7)$) + (0.0, 0.3) $) node[align=center,left]{$\Gamma_6$};
		
		\draw ($($(A7)!0.5!(A8)$) + (-0.1, 0) $) node[align=center,above]{$\Gamma_7$};

		\draw ($($(A8)!0.5!(A1)$) + (-0.2, -0.2) $) node[align=center,below]{$\Gamma_8$};

		\draw (Olabel) node[align=center]{$\Omega$};
	\end{tikzpicture}
	\caption{Example domain $\Omega$ and boundary partition $\Gamma_c = \Gamma_4 \cup \Gamma_5 \cup \Gamma_8$, $\Gamma_s = \Gamma_1$, and $\Gamma_f = \Gamma_2 \cup \Gamma_3 \cup \Gamma_6 \cup \Gamma_7$.}
	\label{fig:domain-example}
\end{figure}

Despite the apparently rather special form of \cref{eq:biharmonic-primal-variational}, a number of problems that can be expressed in the form \cref{eq:biharmonic-primal-variational} do appear in practice. Perhaps the most familiar example is the biharmonic equation $\Delta^2 w = F$ whose weak form corresponds to the choice
\begin{align}
	\label{eq:a-biharmonic}
	a(\bdd{\theta}, \bdd{\psi}) = (\dive \bdd{\theta}, \dive \bdd{\psi}) \qquad \forall \bdd{\theta}, \bdd{\psi} \in \bdd{H}^1(\Omega).
\end{align}
The boundedness of $a(\cdot,\cdot)$ \cref{eq:a-bounded} with $M = \sqrt{2}$ follows from the Cauchy-Schwarz inequality, while the coercivity of $a(\grad \cdot, \grad \cdot)$ \cref{eq:a-elliptic} in the case $\Gamma = \Gamma_c$ or $\Gamma = \Gamma_s$ follows from standard arguments (see e.g. \cite[Theorem 2.2.3]{Grisvard92}).

Another application is the small deflection of an isotropic Kirchhoff plate under a transverse load $F$ (see e.g. Chapter 4 of \cite{Timoshenko59}), where  $\Omega$ represents the midsurface of a thin plate undergoing linear elastic deformation and $\Gamma_c$, $\Gamma_s$,and $\Gamma_f$ correspond to where clamped, simply supported, and free boundary conditions are applied. The rotation $\bdd{\theta}$ of the normal to the midsurface of the plate and the (symmetric) bending moments tensor $\bdd{M}$ are expressed in terms of the transverse displacement $w$ of the midsurface of the plate by the relations
\begin{align}
	\bdd{\theta}(w) = \grad w \quad \text{and} \quad \bdd{M}(\bdd{\theta}) := -D \{ (1-\nu) \bdd{\varepsilon}(\bdd{\theta}) +  \nu (\dive \bdd{\theta}) \bdd{I} \},
\end{align}
where $\bdd{\varepsilon}(\bdd{\theta}) = (\partial_j \theta_i + \partial_i \theta_j)/2$. The bending stiffness $D = E \tau^3/(12(1-\nu^2)) > 0$ of the plate depends on the Young's modulus $E > 0$ and Poisson ratio $0 < \nu \leq 1/2$ of the material, as well as the thickness $\tau > 0$ of the plate. The in-plane stresses of the plate at a distance $-\tau/2 \leq z \leq \tau/2$ above or below the midsurface of the plate are proportional to $\bdd{M}$:
\begin{align}
	\label{eq:kirchhoff-stresses}
	\bdd{\sigma} = \frac{12 z}{\tau^3} \bdd{M}(\bdd{\theta}) = \frac{12 z}{\tau^3} \bdd{M}(\grad w).
\end{align}

The transverse displacement of the plate is governed by a problem of the form \cref{eq:biharmonic-primal-variational} where
\begin{align}
	\label{eq:abilinear-def}
	a(\bdd{\theta}, \bdd{\psi}) = D\left[ (1-\nu)(\bdd{\varepsilon}(\bdd{\theta}), \bdd{\varepsilon}(\bdd{\psi})) + \nu (\dive \bdd{\theta}, \dive \bdd{\psi}) \right] \qquad \forall \bdd{\theta}, \bdd{\psi} \in \bdd{H}^1(\Omega) .
\end{align}
The boundedness of $a(\cdot, \cdot)$ \cref{eq:a-bounded} again follows from the Cauchy-Schwarz inequality, where $M$ depends on the material parameters. In the case $|\Gamma_c| > 0$,  the coercivity of $a(\grad \cdot, \grad \cdot)$ \cref{eq:a-elliptic} follows from Korn's inequality and Poincar\'{e}'s inequality:
\begin{align}
	\label{eq:a-elliptic-kirchhoff}
	a(\grad w, \grad w) \geq C \| \bdd{\varepsilon}(\grad w)\|^2 \geq C \| \grad w \|_{1}^2 \geq C \| w \|_{2}^2 \qquad \forall w \in H^2_{\Gamma}(\Omega),
\end{align}
where $C > 0$ is a positive constant. Condition \cref{eq:a-elliptic} also holds whenever $H^2_{\Gamma}(\Omega) \cap \mathcal{P}_1(\Omega) = \emptyset$ \cite[Theorem 5.9.5]{Brenner08}, and in particular when the plate is simply-supported ($\Gamma = \Gamma_s$).

Let $\mathcal{T}$ be a shape regular \cite[Definition (4.4.13)]{Brenner08} partitioning of $\Omega$ into simplices such that the nonempty intersection of any two distinct elements from $\mathcal{T}$ is a single common sub-simplex of both elements, and denote its mesh size by $h := \max_{K \in \mathcal{T}} h_K$ where $h_K := \mathrm{diam}(K)$. Let $\mathbb{W} \subset H^2(\Omega)$ denote any $H^2$-conforming finite element space on $\mathcal{T}$ and let $\mathbb{W}_{\Gamma} := \mathbb{W} \cap H^2_{\Gamma}(\Omega)$. A standard $H^2$-conforming Galerkin finite element scheme for \cref{eq:biharmonic-primal-variational} reads
\begin{align}
	\label{eq:biharmonic-primal-fem}
	w_{X} \in \mathbb{W}_{\Gamma} : \qquad a(\grad w_{X}, \grad v) = F(v) \qquad \forall v \in \mathbb{W}_{\Gamma}.
\end{align}
The use of a Galerkin scheme based on a $H^2$-conforming finite element space $\mathbb{W}_{\Gamma}$ means problem \cref{eq:biharmonic-primal-fem} inherits the stability of the original continuous problem, while a standard application of C\'{e}a's lemma shows that $w_X$ is, up to a multiplicative constant, the best approximation to $w$ in $\mathbb{W}_{\Gamma}$:
\begin{align}
	\| w - w_{X} \|_{2} \leq \frac{M}{\alpha} \inf_{v \in \mathbb{W}_{\Gamma}} \|w - v\|_{2}.
\end{align}
However, the spaces $\mathbb{W}$ are seldom employed in practice for various reasons alluded to in \cref{sec:intro}, not least of which is that few existing software packages offer $C^1$-continuous spaces, even for lower polynomial orders $p$. 

Our objective here is to seek an implementation scheme to compute $w_X$ which avoids the need to construct a $C^1$-conforming basis for the space $\mathbb{W}_{\Gamma}$ and which leads itself to implementation using standard packages.

\section{The case $\bar{\Gamma}_{c} \cup \bar{\Gamma}_s$ is connected}
\label{sec:simple-gammacs-connected}

In order to illustrate the basic idea, we first present a simple algorithm to compute the solution of \cref{eq:biharmonic-primal-fem} in the simplified setting when the portion of the boundary $\Gamma_{cs} := \bar{\Gamma}_{c} \cup \bar{\Gamma}_s$ on which $u|_{\Gamma_{cs}} = 0$ is connected. For definiteness, the space $\mathbb{W} = W^p$ is taken to be the Morgan-Scott space \cite{MorganScott75} of degree $p$:
\begin{align}
	\label{eq:wp-morgan-scott}
	W^p &:= \{ v \in C^1(\Omega) : v|_{K} \in \mathcal{P}_{p}(K) \ \forall K \in \mathcal{T} \} \subset H^2(\Omega),
\end{align}
where the degree $p$ is usually taken to be at least fifth order (although lower orders can also be considered). Later, we consider other choices for $\mathbb{W} \subset H^2(\Omega)$, but we start with the Morgan-Scott space which is generally regarded as particularly challenging to implement since, in addition to the $H^2$-conformity condition, difficulties can also arise from the topology of the mesh \cite{MorganScott75}. 

We present a finite element scheme that still produces the unique $C^1$ conforming approximation defined by \cref{eq:biharmonic-primal-fem} that one could obtain by implementing the space $W^p$, yet avoids both the need to generate a basis for the $C^1$ space and complications arising from mesh topology. The key idea is to view the Morgan-Scott space as an $H^2$-conforming subspace of a corresponding $H^1$-conforming space $\tilde{W}_{\Gamma}^{p}$ defined by
\begin{align}
	\label{eq:tildewp-morgan-scott}
	\tilde{W}_{\Gamma}^p := \{ v \in C(\Omega) : v|_{K} \in \mathcal{P}_{p}(K) \ \forall K \in \mathcal{T} \text{ and } v|_{\Gamma_{cs}} = 0 \}.
\end{align}
The fact that $\tilde{W}_{\Gamma}^p$ is only $H^1(\Omega)$-conforming means that standard packages can be used to compute the solution $\tilde{z}_p \in \tilde{W}_{\Gamma}^p$ of the problem:
\begin{align}
	\label{eq:zp-morgan-scott}
	(\grad \tilde{z}_p, \grad \tilde{v}) = F(\tilde{v}) \qquad \forall \tilde{v} \in \tilde{W}_{\Gamma}^p,
\end{align}
where for simplicity henceforth, we assume that $F$ is well-defined on  $\tilde{W}_{\Gamma}^p$. Thanks to the inclusion $\tilde{W}_{\Gamma}^p \subset W_{\Gamma}^p := W^p \cap H^2_{\Gamma}(\Omega)$, \cref{eq:biharmonic-primal-fem} may then be rewritten in the form:
\begin{align}
	\label{eq:morgan-scott-inter-with-zp}
	a(\grad w_p, \grad v) = (\grad \tilde{z}_p, \grad v) \qquad \forall v \in W_{\Gamma}^p,
\end{align}
where we use the notation $w_p := w_X$ to highlight the dependence on $p$. Observing that \cref{eq:morgan-scott-inter-with-zp} only involves gradients of functions in $W_{\Gamma}^p$, it is tempting to instead pose \cref{eq:morgan-scott-inter-with-zp} over the image space $\grad W_{\Gamma}^p$ and seek $\bdd{\theta}_{p-1} = \grad w_p$ which, in view of \cref{eq:morgan-scott-inter-with-zp}, must satisfy
\begin{align}
	\label{eq:morgan-scott-inter-a-conds}
	\bdd{\theta}_{p-1} \in \grad W_{\Gamma}^p : \qquad 
	a(\bdd{\theta}_{p-1}, \bdd{\psi}) = (\grad \tilde{z}_p, \bdd{\psi})  \qquad \forall \bdd{\psi} \in \grad W_{\Gamma}^p.
\end{align}
An immediate benefit of this approach is that $\grad W_{\Gamma}^p$ is a conforming subspace of 
\begin{align}
	\label{eq:thetagamma-def}
	\bdd{\Theta}_{\Gamma}(\Omega) &:= \{ \bdd{\theta} \in \bdd{H}^1(\Omega) : \bdd{\theta} |_{\Gamma_c} = \bdd{0} \text{ and } \unitvec{t} \cdot \bdd{\theta}|_{\Gamma_s} = 0   \},
\end{align}
for which only $\bdd{H}^1$-conformity is required. The snag in working with the space $\grad W_{\Gamma}^p$ is that, while the continuity requirements are weakened, the structure of the space means that one has to find a basis consisting of curl-free functions. Such a basis can be constructed by taking the gradients of a basis for the Morgan-Scott space which, of course, defeats the object. Instead, we relax the curl-free constraint and seek $\bdd{\theta}_{p-1}$ in the larger subspace
\begin{align}
	\label{eq:thetap-morgan-scott}
	\bdd{G}^{p-1}_{\Gamma} := \{ \bdd{\theta} \in \bdd{\Theta}_{\Gamma}(\Omega) : \bdd{\theta}|_{K} \in [\mathcal{P}_{p-1}(K)]^2 \ \forall K \in \mathcal{T} \}
\end{align}
for which it is straightforward to construct a basis: only $\bdd{C}^0$-continuity is required. However, relaxing the curl-free constraint comes at the expense of introducing an indeterminancy in $\bdd{\theta}_{p-1}$ stemming from the fact that $\grad W_{\Gamma}^p \neq \bdd{G}^{p-1}_{\Gamma}$ along with the fact that \cref{eq:morgan-scott-inter-a-conds} does not determine the components of $\bdd{\theta}_{p-1} \in \bdd{G}^{p-1}_{\Gamma} / \grad W_{\Gamma}^p$. Still undeterred, we seek additional conditions on $\bdd{\theta}_{p-1}$ that remove this non-uniqueness.

To this end, we observe that the spaces $W^p_{\Gamma}$ and $\bdd{G}^{p-1}_{\Gamma}$ form part of an \textit{exact sequence} (see \cref{lem:vp-specify-constants}):
\begin{align}
	\label{eq:stokes-complex-bcs-fem}
	0 \xrightarrow{\ \ \ \subset \ \ \ } W^p_{\Gamma} \xrightarrow{\ 	\ \ \grad \ \ \ } \bdd{G}^{p-1}_{\Gamma} \xrightarrow{ \ \ \rot \ \ } \rot \bdd{G}^{p-1}_{\Gamma} \xrightarrow{ \ \ \ 0 \ \ \ } 0,
\end{align}
where $\rot \bdd{\theta} = \partial_x \theta_2 - \partial_y \theta_1$. While $\grad W_{\Gamma}^p$ is a proper subspace of $\bdd{G}^{p-1}_{\Gamma}$, the sequence \cref{eq:stokes-complex-bcs-fem} is \textit{exact} in the sense the kernel of each operator is the range of the preceding operator; i.e. if $\bdd{\theta} \in \bdd{G}^{p-1}_{\Gamma}$ satisfies $\rot \bdd{\theta} \equiv 0$, then $\bdd{\theta} = \grad w$ for some $w \in W^p_{\Gamma}$. In other words, the following characterization holds:
\begin{align}
	\label{eq:morgan-scott-rotfree-grad}
	\bdd{\theta}_{p-1} \in \grad W_{\Gamma}^p \iff \rot \bdd{\theta}_{p-1} \equiv 0.
\end{align}
The significance of \cref{eq:morgan-scott-rotfree-grad} is that the condition $\bdd{\theta}_{p-1} \in \grad W_{\Gamma}^p$ can be enforced by augmenting \cref{eq:morgan-scott-inter-a-conds} with the additional, independent, condition that $\rot \bdd{\theta}_{p-1} \equiv 0$. This is most conveniently achieved through the use of a Lagrange multiplier $r_{p-2} \in \rot \bdd{\Theta}_{\Gamma}^{p-1}$, which gives rise to the following variational problem: Find $(\bdd{\theta}_{p-1}, r_{p-2}) \in \bdd{G}^{p-1}_{\Gamma} \times \rot \bdd{G}^{p-1}_{\Gamma}$ such that
\begin{subequations}
	\label{eq:stokes-system-morgan-scott}
	\begin{alignat}{2}
		\label{eq:stokes-system-morgan-scott-1}
		a(\bdd{\theta}_{p-1}, \bdd{\psi}) + (\rot \bdd{\psi}, r_{p-2}) &= (\grad \tilde{z}_p, \bdd{\psi}) \qquad & &\forall \bdd{\psi} \in \bdd{G}^{p-1}_{\Gamma}, \\
		\label{eq:stokes-system-morgan-scott-2}
		(\rot \bdd{\theta}_{p-1}, s) &= 0 \qquad & &\forall s \in \rot \bdd{G}^{p-1}_{\Gamma}.
	\end{alignat}
\end{subequations}
\Cref{lem:morgan-scott-simplified} below states that the system \cref{eq:stokes-system-morgan-scott} uniquely determines $\bdd{\theta}_{p-1}$, and guarantees that $\bdd{\theta}_{p-1} = \grad w_p$ while only requiring the use of $\bdd{C}^0$-continuous finite element spaces. Of course, the reader may quite reasonably object that \cref{eq:stokes-system-morgan-scott} only enables us to obtain the \textit{gradient} of the Morgan-Scott approximation $w_p$ of the displacement.

Remarkably, the \textit{$H^2$-conforming approximation to the actual displacement $w_p \in W_{\Gamma}^p \subset \tilde{W}_{\Gamma}^p$ can be recovered via a post-processing step that again only involves the $H^1$-conforming subspace} $\tilde{W}_{\Gamma}^p$ (rather than the $H^2$-conforming space $W_{\Gamma}^p$) as follows:
\begin{align}
	\label{eq:morgan-scott-final-projection}
	\tilde{w}_p \in \tilde{W}_{\Gamma}^p : \qquad (\grad \tilde{w}_p, \grad v) = (\bdd{\theta}_{p-1}, \grad v) \qquad \forall v \in \tilde{W}_{\Gamma}^p.
\end{align}
The following result confirms that the displacement $\tilde{w}_p \in \tilde{W}_{\Gamma}^p$ computed from the $H^1$-conforming scheme \cref{eq:morgan-scott-final-projection} is well-defined and \textit{coincides with the $H^2$-conforming Morgan-Scott displacement $w_p$} defined by \cref{eq:biharmonic-primal-fem}; i.e. $w_p = \tilde{w}_p$:
\begin{lemma}
	\label{lem:morgan-scott-simplified}
	Let $W_{\Gamma}^p$, $\tilde{W}_{\Gamma}^p$, and $\bdd{G}^{p-1}_{\Gamma}$ be given by \cref{eq:wp-morgan-scott,eq:tildewp-morgan-scott,eq:thetap-morgan-scott}, respectively, and let $\Gamma_{cs}$ be connected. Then, there exists unique $(\tilde{z}_p, \bdd{\theta}_{p-1}, r_{p-2}, \tilde{w}_p) \in \tilde{W}_{\Gamma}^p \times \bdd{G}^{p-1}_{\Gamma} \times \rot \bdd{G}^{p-1}_{\Gamma} \times \tilde{W}_{\Gamma}^p$ satisfying \cref{eq:zp-morgan-scott,eq:stokes-system-morgan-scott,eq:morgan-scott-final-projection}. Moreover, $\grad \tilde{w}_p = \bdd{\theta}_{p-1}$ and $\tilde{w}_p = w_p$, where $w_p$ is defined by \cref{eq:biharmonic-primal-fem}.
\end{lemma}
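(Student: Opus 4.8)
The plan is to work through the four coupled problems \cref{eq:zp-morgan-scott}, \cref{eq:stokes-system-morgan-scott}, and \cref{eq:morgan-scott-final-projection} in sequence, establishing existence and uniqueness at each stage, and then, working backwards, to identify the computed quantities with $\grad w_p$ and with $w_p$. The two scalar problems are routine Lax--Milgram arguments; the only substantive point is the well-posedness of the intermediate saddle-point system, after which the identifications follow by testing against gradients.

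First I would dispatch \cref{eq:zp-morgan-scott} and \cref{eq:morgan-scott-final-projection}. Both are posed on the $H^1$-conforming space $\tilde W_\Gamma^p$, every member of which vanishes on $\Gamma_{cs}$; since $|\Gamma_c \cup \Gamma_s| > 0$, a Poincar\'{e}--Friedrichs inequality shows that $\bdd\psi \mapsto (\grad\bdd\psi, \grad\bdd\psi)$ is coercive on $\tilde W_\Gamma^p$, so the Lax--Milgram lemma yields a unique $\tilde z_p$ (the functional $F$ being well-defined on $\tilde W_\Gamma^p$ by assumption) and, once $\bdd\theta_{p-1}$ is available, a unique $\tilde w_p$.

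The heart of the argument is the well-posedness of \cref{eq:stokes-system-morgan-scott}, which I would obtain from the Brezzi theory by verifying the two structural conditions. For the inf--sup (LBB) condition on the form $(\bdd\psi, s) \mapsto (\rot\bdd\psi, s)$ over $\bdd G^{p-1}_\Gamma \times \rot\bdd G^{p-1}_\Gamma$: by construction the multiplier space is exactly the image of $\rot$ acting on $\bdd G^{p-1}_\Gamma$, so for each $s$ one may choose a preimage $\bdd\psi \in \bdd G^{p-1}_\Gamma$ lying in the orthogonal complement of the kernel of $\rot$ on $\bdd G^{p-1}_\Gamma$; the spaces being finite-dimensional, this defines a bounded right inverse of $\rot$ and hence a positive inf--sup constant. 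For coercivity of $a(\cdot,\cdot)$ on the constraint kernel, testing \cref{eq:stokes-system-morgan-scott-2} with $s = \rot\bdd\psi$ identifies this kernel with $\{\bdd\psi \in \bdd G^{p-1}_\Gamma : \rot\bdd\psi \equiv 0\}$, which by the exactness of \cref{eq:stokes-complex-bcs-fem} (this is precisely where the hypothesis that $\Gamma_{cs}$ is connected enters) equals $\grad W_\Gamma^p$; on this set \cref{eq:a-elliptic} gives $a(\grad w, \grad w) \geq \alpha\|w\|_2^2 \geq \alpha\|\grad w\|_1^2$. Together with the boundedness \cref{eq:a-bounded} and positivity \cref{eq:a-positive} of $a$, Brezzi's theorem then delivers a unique pair $(\bdd\theta_{p-1}, r_{p-2})$.

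It remains to make the identifications. Choosing $s = \rot\bdd\theta_{p-1}$ in \cref{eq:stokes-system-morgan-scott-2} gives $\rot\bdd\theta_{p-1} \equiv 0$, so by \cref{eq:morgan-scott-rotfree-grad} we may write $\bdd\theta_{p-1} = \grad w$ for some $w \in W_\Gamma^p$. Testing \cref{eq:stokes-system-morgan-scott-1} with $\bdd\psi = \grad v$, $v \in W_\Gamma^p$, annihilates the multiplier term since $\rot\grad v \equiv 0$, and using $W_\Gamma^p \subset \tilde W_\Gamma^p$ together with \cref{eq:zp-morgan-scott} (so that $(\grad\tilde z_p, \grad v) = F(v)$) yields $a(\grad w, \grad v) = F(v)$ for all $v \in W_\Gamma^p$, i.e. exactly \cref{eq:morgan-scott-inter-with-zp}; uniqueness of \cref{eq:biharmonic-primal-fem} then forces $w = w_p$, whence $\bdd\theta_{p-1} = \grad w_p$. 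Substituting this into \cref{eq:morgan-scott-final-projection}, its right-hand side becomes $(\grad w_p, \grad v)$ with $w_p \in \tilde W_\Gamma^p$, so $w_p$ itself solves \cref{eq:morgan-scott-final-projection}; uniqueness forces $\tilde w_p = w_p$, and therefore $\grad\tilde w_p = \grad w_p = \bdd\theta_{p-1}$. The one genuinely non-routine step is the verification of the Brezzi conditions, and within that the coercivity on the kernel, which rests entirely on recognizing the discrete kernel of $\rot$ as $\grad W_\Gamma^p$ via the exact sequence \cref{eq:stokes-complex-bcs-fem}; the inf--sup condition, by contrast, is essentially automatic because $\rot\bdd G^{p-1}_\Gamma$ was deliberately chosen as the multiplier space.
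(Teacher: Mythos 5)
Your proof is correct, and it recovers the paper's conclusions; the difference is in how the well-posedness of the Stokes-like system is packaged. The paper does not prove \cref{lem:morgan-scott-simplified} directly: it obtains it as the $N=1$ case of \cref{thm:morgan-scott-simplified-ext} together with \cref{lem:vp-specify-constants}, and in \cref{thm:morgan-scott-simplified-ext} existence and uniqueness are established not via Brezzi's conditions but by showing the homogeneous system admits only the trivial solution and invoking uniqueness-implies-existence for a finite-dimensional square system; the multiplier is eliminated there by exhibiting a preimage of $r_{p-2}$ under $\rot$ (in the general case via \cref{lem:invert-rot-with-free-averages}), which is precisely the surjectivity your inf-sup verification encodes. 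The substance is therefore the same -- the constraint kernel is $\grad W_{\Gamma}^p$ by exactness of \cref{eq:stokes-complex-bcs-fem}, where \cref{eq:a-elliptic} gives coercivity, and the multiplier space is by construction the image of $\rot$ -- but your Brezzi route is self-contained for the connected case and yields a (possibly mesh-dependent) discrete inf-sup constant, anticipating the quantity $\beta_X$ used later for the iterated penalty analysis, whereas the paper's route extends verbatim to disconnected $\Gamma_{cs}$ with the additional $\mathbb{R}^{N-1}$ multipliers. One dependency to flag, though it is not a gap: like the paper, you take the reverse implication in \cref{eq:morgan-scott-rotfree-grad} as given; a fully self-contained argument would include the short proof from \cref{lem:vp-specify-constants} that a rot-free $\bdd{\theta}\in\bdd{G}^{p-1}_{\Gamma}$ has a potential (simple connectivity of $\Omega$) which is automatically piecewise $\mathcal{P}_p$, of class $C^1$, and constant on the connected $\Gamma_{cs}$, hence can be normalized to lie in $W_{\Gamma}^p$.
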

\Cref{lem:morgan-scott-simplified} is a consequence of \cref{thm:morgan-scott-simplified-ext} and \cref{lem:vp-specify-constants} which we discuss in the next section.

In summary, when $\Gamma_{cs}$ is connected, the $H^2$-conforming Morgan-Scott finite element approximation \cref{eq:biharmonic-primal-fem} can be computed by solving a sequence of three standard finite element problems:
\begin{enumerate}
	\item Pre-process the data $F$ by computing the $H^1$-conforming Riesz representer of the source data on $\tilde{W}_{\Gamma}^p$ \cref{eq:zp-morgan-scott}.
	
	\item Solve a Stokes-like problem using the mixed Scott-Vogelius element \cite{ScottVog84,ScottVog85} $\bdd{G}^{p-1}_{\Gamma} \times \rot \bdd{G}^{p-1}_{\Gamma}$ \cref{eq:stokes-system-morgan-scott}. 
	
	\item Post-process the gradient $\bdd{\theta}_{p-1}$ by computing an $H^1$-conforming elliptic projection on $\tilde{W}_{\Gamma}^p$ \cref{eq:morgan-scott-final-projection}.
\end{enumerate}
Crucially, steps 1-3 only require $H^1$-conforming finite element spaces that are commonly available in software packages. The limitation of the analysis to $\Gamma_{cs}$ connected here is unduly restrictive. In the next section, we modify the basic approach in this section to accommodate more general boundary conditions.

\section{General boundary conditions}
\label{sec:gen-boundary}

The foregoing discussion dealt with the case when $\Gamma_{cs} = \bar{\Gamma}_c \cup \bar{\Gamma}_s$ is simply-connected for reasons that only become in the general case in which $\Gamma_{cs} = \cup_{i=1}^{N} \Gamma_{cs}^{(i)}$ consists of $N \geq 1$ connected components. If $\mathbb{W}$ is again taken to be the Morgan-Scott space $W^p$ as in the previous section, then the same arguments of the previous section again show that the quantity $\bdd{\theta}_{p-1} \in \bdd{G}^{p-1}_{\Gamma}$  must satisfy
\begin{align}
	\label{eq:thetap-necessary-conds}
	\rot \bdd{\theta}_{p-1} \equiv 0 \quad \text{and} \quad a(\bdd{\theta}_{p-1}, \bdd{\psi}) = (\grad \tilde{z}_p, \bdd{\psi}) \qquad \forall \bdd{\psi} \in \grad W_{\Gamma}^p.
\end{align}
When $\Gamma_{cs}$ consists of a single connected component ($N=1$), the exactness of the sequence \cref{eq:stokes-complex-bcs-fem} meant that the reverse implication in \cref{eq:morgan-scott-rotfree-grad} holds so that conditions \cref{eq:thetap-necessary-conds} uniquely determine $\bdd{\theta}_{p-1}$. However, when $\Gamma_{cs}$ is not connected ($N > 1$), the corresponding implication is no longer valid:
\begin{align*}
	\rot \bdd{\theta}_{p-1} \equiv 0 \centernot\implies \bdd{\theta}_{p-1} \in \grad W_{\Gamma}^p.
\end{align*}
In other words, $\bdd{\Theta}_{\Gamma}^{p-1}$ contains rot-free fields that cannot be written as the gradient of a potential in $W_{\Gamma}^p$ and, as a consequence, \cref{lem:morgan-scott-simplified} does not extend to the case when $\Gamma_{cs}$ is not connected. We justify this claim below, but it is important to note that this issue is not specific to the Morgan-Scott space. In this section, we aim to extend the basic idea used in \cref{sec:simple-gammacs-connected} where (a) $\Gamma_{cs}$ need not be simply-connected and (b) $\mathbb{W}$ can be chosen to be a general $H^2$-conforming finite element space.

\subsection{General Theory}
\label{sec:gen-theory}

Let $\mathbb{W} \subseteq H^2(\Omega)$ be any $H^2$-conforming subspace and set $\mathbb{W}_{\Gamma} := \mathbb{W} \cap H^2_{\Gamma}(\Omega)$. Our objective is again to compute the $H^2$-conforming approximation $w_X$ defined in \cref{eq:biharmonic-primal-fem} while avoiding having to implement the space $\mathbb{W}$ directly. Let $\mathbbb{G}_{\Gamma} \subseteq \bdd{\Theta}_{\Gamma}(\Omega)$ denote any conforming subspace such that  $\grad \mathbb{W}_{\Gamma} \subseteq \mathbbb{G}_{\Gamma}$ and which satisfies the following property: \\
\begin{description}
	\item[(A1)\label{hp:wp-constants-boundary-assumption}] Given $\{ \mu_i \}_{i=1}^{N} \subset \mathbb{R}$, there exists $v \in \mathbb{W}$ satisfying
	\begin{align}
		\label{eq:wp-constants-boundary-assumption}
		v|_{\Gamma_{cs}^{(i)}} = \mu_i, \qquad 1 \leq i \leq N, \qquad  \partial_n v|_{\Gamma} = 0, \quad \text{and} \quad \grad v \in \mathbbb{G}_{\Gamma}. 	
	\end{align} 
	\vspace{0.125em}
\end{description}
With this notation, the generalized version of \cref{eq:zp-morgan-scott} reads
\begin{align}
	\label{eq:zp-abstract}
	\tilde{z}_X \in \tilde{\mathbb{W}}_{\Gamma} : \qquad (\grad \tilde{z}_X, \grad \tilde{v}) = F(\tilde{v}) \qquad \forall \tilde{v} \in \tilde{\mathbb{W}}_{\Gamma},
\end{align}
where $\tilde{\mathbb{W}}_{\Gamma}$ is chosen to be any space satisfying the condition \\
\begin{description}
	\item[(A2)\label{hp:tildewp-condition}] $\mathbb{W}_{\Gamma} \subseteq \tilde{\mathbb{W}}_{\Gamma} \subset H^1(\Omega).$ \\
\end{description}
As in the previous section, we will again assume that $F$ is well-defined on $\tilde{\mathbb{W}}_{\Gamma}$. Assumption \ref{hp:tildewp-condition} offers the flexibility to choose $\tilde{\mathbb{W}}_{\Gamma}$ to be a space that requires only $H^1$-conformity so that \cref{eq:zp-abstract} can be implemented using only $C^0$ finite element spaces. As before, $\bdd{\theta}_X := \grad w_X$ satisfies the following analogue of \cref{eq:thetap-necessary-conds}:
\begin{align}
	\label{eq:thetap-necessary-conds-abstract}
	\rot \bdd{\theta}_X \equiv 0 \quad \text{and} \quad a(\bdd{\theta}_X, \bdd{\psi}) = (\grad \tilde{z}_X, \bdd{\psi}) \qquad \forall \bdd{\psi} \in \grad \mathbb{W}_{\Gamma}.
\end{align}

\begin{figure}[htb]
	\centering
	\begin{tikzpicture}	[scale=0.5]	
		\coordinate (A1) at (-8, 7);
		\coordinate (A2) at (-11, 7);
		\coordinate (A3) at (-14, 5);
		\coordinate (A4) at (-12, 3);
		\coordinate (A5) at (-14, 1);
		\coordinate (A6) at (-12, -1.5);
		\coordinate (A7) at (-7.5, -1.5);
		\coordinate (A75) at (-5, 0);
		\coordinate (A80) at (-5, 2);
		\coordinate (A8) at (-4.5, 3);
		\coordinate (A9) at (-5, 6);	
		
		\draw[line width=1mm] (A1) -- (A2);
		\draw[line width=1mm] (A3) -- (A4) -- (A5);
		\draw[line width=1mm] (A6) -- (A7);
		\draw[line width=1mm] (A8) -- (A9);
		
		\draw[line width=0.1mm] (A2) -- (A3);
		\draw[line width=0.1mm] (A5) -- (A6);
		\draw[line width=0.1mm] (A7) -- (A75);

		\draw[line width=0.1mm] (A80) -- (A8);
		\draw[draw=none] (A75) -- (A80) node[pos=0.5, sloped]{...};
		\draw[line width=0.1mm] (A9) -- (A1);
		
		\filldraw (A1) circle (4pt);
		\filldraw (A2) circle (4pt);	
		\filldraw  (A3) circle (4pt);
		\filldraw (A4) circle (4pt);
		\filldraw (A5) circle (4pt);
		\filldraw (A6) circle (4pt);
		\filldraw (A7) circle (4pt);
		\filldraw (A8) circle (4pt);
		\filldraw (A9) circle (4pt);
		
		\draw ($($(A1)!0.5!(A2)$) + (0, 0) $) node[align=center,above]{$\Gamma_{cs}^{(1)}$};
		
		\draw ($($(A2)!0.5!(A3)$) + (0, 0) $) node[align=center,above]{$\Gamma_{f}^{(1)}$};
		
		\draw ($(A4) +(-1, 0)$) node[align=center,left]{$\Gamma_{cs}^{(2)}$};
		
		\draw ($($(A5)!0.5!(A6)$) + (-0.1, -0.5) $) node[align=center,left]{$\Gamma_{f}^{(2)}$};
		
		\draw ($($(A6)!0.5!(A7)$) + (0, 0) $) node[align=center,below]{$\Gamma_{cs}^{(3)}$};
		
		\draw ($($(A8)!0.5!(A9)$) + (0, 0) $) node[align=center,right]{$\Gamma_{cs}^{(N)}$};
		
		\draw ($($(A9)!0.5!(A1)$) + (0.3, 0) $) node[align=center,above]{$\Gamma_{f}^{(N)}$};
	\end{tikzpicture}
	\caption{Notation and ordering of $\Gamma_{cs}^{(i)}$ and $\Gamma_f^{(i)}$, $i = 1,\ldots,N$.}
	\label{fig:domain-example-bcs}
\end{figure}
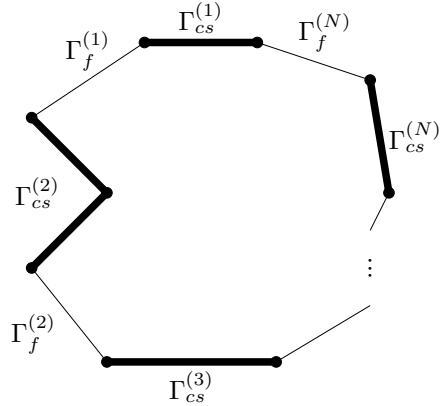

Assumption \ref{hp:wp-constants-boundary-assumption} means that the $\rot$ operator is invertible on the space $\mathbbb{G}_{\Gamma}$ in the following sense:
\begin{lemma}
	\label{lem:invert-rot-with-free-averages}
	Let $\mathbb{W}$ and $\mathbbb{G}_{\Gamma}$ satisfy \ref{hp:wp-constants-boundary-assumption}. Then, for every $r \in \rot \mathbbb{G}_{\Gamma}$ and $\vec{\omega} \in \mathbb{R}^{N}$ such that
	\begin{align}
		\label{eq:kappaij-constraint}
		\sum_{i=1}^{N} \omega_{i} = \int_{\Omega} r \ d\bdd{x},
	\end{align}
	there exists $\bdd{\theta} \in \mathbbb{G}_{\Gamma}$ satisfying
	\begin{align}
		\label{eq:invert-rot-averages-early}
		\rot \bdd{\theta} = r \quad \text{and} \quad \int_{\Gamma_f^{(i)}} \unitvec{t} \cdot \bdd{\theta} \ ds = \omega_i, \qquad 1 \leq i \leq N,
	\end{align}
	where $\{ \Gamma_{f}^{(i)} \}_{i=1}^{N}$ denote the $N$ connected components of $\Gamma_f = \partial \Omega \setminus \Gamma_{cs}$ that separate the $N$ connected components of $\Gamma_{cs}$ (e.g. in \cref{fig:domain-example-bcs}).
\end{lemma}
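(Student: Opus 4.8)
The plan is to reduce the existence of $\bdd{\theta}$ to an application of Assumption \ref{hp:wp-constants-boundary-assumption} together with the surjectivity of $\rot$ on $\mathbbb{G}_{\Gamma}$ onto $\rot \mathbbb{G}_{\Gamma}$ (which holds by definition of the target space). First I would fix $r \in \rot \mathbbb{G}_{\Gamma}$ and pick \emph{any} $\bdd{\theta}_0 \in \mathbbb{G}_{\Gamma}$ with $\rot \bdd{\theta}_0 = r$. The vector of line integrals $\eta_i := \int_{\Gamma_f^{(i)}} \unitvec{t} \cdot \bdd{\theta}_0 \, ds$, $1 \le i \le N$, is then some fixed element of $\mathbb{R}^N$, and I claim it automatically satisfies the compatibility relation $\sum_{i=1}^N \eta_i = \int_\Omega r \, d\bdd{x}$. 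This is the key structural identity: since $\bdd{\theta}_0 \in \bdd{\Theta}_{\Gamma}(\Omega)$, its tangential trace vanishes on $\Gamma_c$ and (being tangential) on each straight edge of $\Gamma_s$, so the full boundary integral $\int_{\partial\Omega} \unitvec{t} \cdot \bdd{\theta}_0 \, ds$ reduces to a sum of contributions over the free components $\Gamma_f^{(i)}$; on the other hand, Stokes' theorem (the divergence theorem applied to the rotated field, i.e. $\int_\Omega \rot \bdd{\theta}_0 \, d\bdd{x} = \oint_{\partial\Omega} \bdd{\theta}_0 \cdot \unitvec{t}\, ds$ with the standard orientation) gives $\int_\Omega r \, d\bdd{x} = \sum_i \eta_i$. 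I would need to be slightly careful about the tangential trace on $\Gamma_s$: the condition $\unitvec{t}\cdot\bdd{\theta}|_{\Gamma_s} = 0$ in \cref{eq:thetagamma-def} is exactly what kills those edges, and the corners between differently-labelled edges contribute nothing to the line integral.

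Next I would correct the line integrals without disturbing $\rot$. The difference $\vec{\omega} - \vec{\eta} \in \mathbb{R}^N$ sums to zero by the compatibility hypothesis \cref{eq:kappaij-constraint} combined with the identity just established. I want to produce a \emph{curl-free} correction $\grad v \in \mathbbb{G}_{\Gamma}$, $v \in \mathbb{W}$, whose tangential line integrals over the $\Gamma_f^{(i)}$ realize the prescribed shifts. This is where Assumption \ref{hp:wp-constants-boundary-assumption} enters: given any constants $\{\mu_i\}_{i=1}^N$ there is $v \in \mathbb{W}$ with $v|_{\Gamma_{cs}^{(i)}} = \mu_i$, $\partial_n v|_\Gamma = 0$, and $\grad v \in \mathbbb{G}_{\Gamma}$. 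For such a $v$, the tangential derivative of $v$ vanishes on $\Gamma_c$ (both traces vanish there) and on $\Gamma_s$ (the tangential component of $\grad v$ is $\partial_t v$, which is zero since $v$ is constant on each connected piece of $\Gamma_{cs}$), so the fundamental theorem of calculus along each $\Gamma_f^{(i)}$, which runs between the endpoint of $\Gamma_{cs}^{(i)}$ and the startpoint of $\Gamma_{cs}^{(i+1)}$ (indices mod $N$, cf. \cref{fig:domain-example-bcs}), gives $\int_{\Gamma_f^{(i)}} \unitvec{t}\cdot \grad v \, ds = \mu_{i+1} - \mu_i$. Thus choosing $\bdd{\theta} := \bdd{\theta}_0 + \grad v$ with $\{\mu_i\}$ solving the discrete "difference equation" $\mu_{i+1} - \mu_i = \omega_i - \eta_i$ does the job: this system is solvable on $\mathbb{R}^N$ (cyclically) precisely because $\sum_i (\omega_i - \eta_i) = 0$, and adding $\grad v$ leaves $\rot \bdd{\theta} = \rot \bdd{\theta}_0 = r$ unchanged while shifting the $i$-th line integral by exactly $\omega_i - \eta_i$. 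Finally $\bdd{\theta} \in \mathbbb{G}_{\Gamma}$ since both $\bdd{\theta}_0$ and $\grad v$ lie in $\mathbbb{G}_{\Gamma}$ and it is a linear space.

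The main obstacle I anticipate is not the algebra but getting the boundary-trace bookkeeping airtight: verifying that $\int_{\partial\Omega}\unitvec{t}\cdot\bdd{\theta}_0\,ds$ genuinely collapses to $\sum_i \int_{\Gamma_f^{(i)}}\unitvec{t}\cdot\bdd{\theta}_0\,ds$ requires knowing that $\Gamma_{cs}$ contributes nothing, which uses \emph{both} pieces of the definition of $\bdd{\Theta}_{\Gamma}(\Omega)$ and the geometry that $\Gamma_f$ has exactly $N$ components interleaved with the $N$ components of $\Gamma_{cs}$ (so the endpoints of the $\Gamma_f^{(i)}$ match up correctly with the $\mu_i$ in the telescoping sum). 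A secondary point worth stating carefully is that the orientation conventions for $\rot$, for $\unitvec{t}$, and for the Stokes/divergence theorem are mutually consistent so that the sign in $\sum_i \eta_i = \int_\Omega r\,d\bdd{x}$ is $+$ and not $-$; once those conventions are pinned down the rest is immediate.
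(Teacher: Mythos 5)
Your proposal is correct and follows essentially the same route as the paper's proof: pick any $\bdd{\psi}\in\mathbbb{G}_{\Gamma}$ with $\rot\bdd{\psi}=r$, use \ref{hp:wp-constants-boundary-assumption} to build a gradient correction $\grad v$ whose boundary constants $\mu_i$ solve the cyclic difference equation $\mu_{i+1}-\mu_i=\omega_i-\int_{\Gamma_f^{(i)}}\unitvec{t}\cdot\bdd{\psi}\,ds$, with solvability guaranteed by Stokes' theorem together with \cref{eq:kappaij-constraint}, and conclude via the telescoping identity \cref{eq:proof:ftc-wp}. Your only reorganization is to verify the compatibility identity $\sum_i\eta_i=\int_\Omega r\,d\bdd{x}$ up front (making explicit that the tangential trace vanishes on $\Gamma_{cs}$, a step the paper uses implicitly) rather than checking $\mu_{N+1}=\mu_1$ afterwards, which is an equivalent bookkeeping choice.
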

\begin{proof}
	Let $r \in \rot \mathbbb{G}_{\Gamma}$ and $\vec{\omega} \in \mathbb{R}^{N}$ satisfying \cref{eq:kappaij-constraint} be given. By labeling the components $\Gamma_f^{(i)}$ and $\Gamma_{cs}^{(i)}$ appropriately, we may arrange $\Gamma_f^{(i)}$ to be located between $\Gamma_{cs}^{(i)}$ and $\Gamma_{cs}^{(i+1)}$, $i = 1,2,\ldots, N-1$, and for the components to be ordered counterclockwise as in \cref{fig:domain-example-bcs} so that $\Gamma_{cs}^{(1)}$ and $\Gamma_{f}^{(N)}$ share a common endpoint. Let $\bdd{\psi} \in \mathbbb{G}_{\Gamma}$ be any function satisfying $\rot \bdd{\psi} = r$. Thanks to \ref{hp:wp-constants-boundary-assumption}, there exists $v \in \mathbb{W}$ satisfying $\partial_n v|_{\Gamma} = 0$ and $v|_{\Gamma_{cs}^{(i)}} = \mu_i$, $i=1,2,\ldots, N$, with $\grad v \in \mathbbb{G}_{\Gamma}$, where $\mu_1 = 0$ and
	\begin{align*}
		\mu_{i+1} = \mu_i + \omega_i - \int_{\Gamma_f^{(i)}} \unitvec{t} \cdot \bdd{\psi} \ ds, \qquad 1 \leq i \leq N.
	\end{align*}
	These relations imply that
	\begin{align}
		\label{eq:proof:mu_np1_vanish}
		\mu_{N+1} = \sum_{i=1}^{N} (\mu_{i+1} - \mu_{i}) 
		= \sum_{i=1}^{N} \omega_i - \int_{\Gamma} \unitvec{t} \cdot \bdd{\psi} \ ds = \sum_{i=1}^{N} \omega_i - \int_{\Omega} \rot \bdd{\psi} \ d\bdd{x} = 0 = \mu_1,
	\end{align}
	where we used Stokes Theorem, that $\rot \bdd{\psi} = r$, and the compatibility condition \cref{eq:kappaij-constraint}.  
	Thus, $\bdd{\theta} := \bdd{\psi} + \grad v$ satisfies $\bdd{\theta} \in \mathbbb{G}_{\Gamma}$ and $\rot \bdd{\theta} = \rot \bdd{\psi} = r$. Moreover, there holds
	\begin{align*}
		\int_{\Gamma_f^{(i)}} \unitvec{t} \cdot \bdd{\theta} \ ds = \int_{\Gamma_f^{(i)}} \unitvec{t} \cdot \bdd{\psi} \ ds + v|_{\Gamma_{cs}^{(i+1)}} - v|_{\Gamma_{cs}^{(i)}} = \int_{\Gamma_f^{(i)}} \unitvec{t} \cdot \bdd{\psi} \ ds + \mu_{i+1} - \mu_i = \omega_i,
	\end{align*}
	for $1 \leq i \leq N$, where we use the convention $\Gamma_{cs}^{(N+1)} := \Gamma_{cs}^{(1)}$ and we used \cref{eq:proof:mu_np1_vanish} and the identity
	\begin{align}
		\label{eq:proof:ftc-wp}
		\int_{\Gamma_f^{(i)}} \unitvec{t} \cdot \grad v \ ds = v|_{\Gamma_{cs}^{(i+1)}} - v|_{\Gamma_{cs}^{(i)}}, \qquad 1 \leq i \leq N.
	\end{align}
\end{proof}

\Cref{lem:invert-rot-with-free-averages} shows that there exist rot-free fields $\bdd{\theta} \in \mathbbb{G}_{\Gamma}$ that cannot be expressed as the gradient of a potential $v \in \mathbb{W}_{\Gamma}$. A counterexample can be constructed as follows. Choose $r \equiv 0$ and, when $\Gamma_{cs}$ is not connected ($N > 1$), let $\omega_i$ (not all zero) satisfy $\sum_{i=1}^{N} \omega_i = 0$. \Cref{lem:invert-rot-with-free-averages} asserts the existence of a rot-free function $\bdd{\theta} \in \mathbbb{G}_{\Gamma}$ satisfying $\int_{\Gamma_{f}^{(i)}} \unitvec{t} \cdot \bdd{\theta} = \omega_i$. Moreover, every $v \in \mathbb{W}_{\Gamma}$ satisfies $v|_{\Gamma_{cs}} = 0$ so that $\int_{\Gamma_f^{(i)}} \unitvec{t} \cdot \grad v \ ds = 0$  and hence $\bdd{\theta} \notin \grad \mathbb{W}_{\Gamma}$. In other words, \cref{eq:morgan-scott-rotfree-grad} no longer holds in the case when $\Gamma_{cs}$ is not connected ($N > 1$) and the arguments used in \cref{sec:simple-gammacs-connected} break down.

Nevertheless, this counterexample suggests that the following modified version of \cref{eq:morgan-scott-rotfree-grad} may hold: 
\begin{align}
	\label{eq:morgan-scott-rot0-gradv}
	\bdd{\theta}\in \mathbbb{G}_{\Gamma} : \quad \rot \bdd{\theta} \equiv 0 \iff \bdd{\theta} \in \grad \mathbb{V}_{\Gamma},
\end{align}
where
\begin{align}
	\label{eq:vp-def-morganscott}
	\mathbb{V}_{\Gamma} := \{ v \in \mathbb{W} : v|_{\Gamma_{cs}^{(1)}} = 0, \ v|_{\Gamma_{cs}^{(i)}} \in \mathbb{R}, \ 2 \leq i \leq N, \text{ and } \partial_n v|_{\Gamma_{c} } = 0 \}.
\end{align}
Condition \cref{eq:morgan-scott-rot0-gradv} means that replacing $\mathbb{W}_{\Gamma}$ by $\mathbb{V}_{\Gamma}$ in the sequence \cref{eq:stokes-complex-bcs-fem} generates an exact sequence, which leads us to make an additional assumption: \\

\begin{description}
	\item[(A3)\label{hp:stokes-complex-bcs-discon-fem}] The following sequence is exact:
	\begin{align}
		\label{eq:stokes-complex-bcs-discon-fem}
		0 \xrightarrow{\ \ \ \subset \ \ \ } \mathbb{V}_{\Gamma} \xrightarrow{\ 	\ \ \grad \ \ \ } \mathbbb{G}_{\Gamma} \xrightarrow{ \ \ \rot \ \ } \rot \mathbbb{G}_{\Gamma} \xrightarrow{ \ \ \ 0 \ \ \ } 0. 
	\end{align}
	\vspace{0.125em}
\end{description}

The space $\mathbb{V}_{\Gamma}$ differs from $\mathbb{W}_{\Gamma}$ in that it admits functions whose traces on $\Gamma_{cs}^{(i)}$, $i > 1$, are (arbitrary) constants, whereas $\mathbb{W}_{\Gamma}$ imposes constraints on these spaces, viz:
\begin{align}
	\label{eq:wp-integral-zero-chara}
	\mathbb{W}_{\Gamma} = \left\{ w \in \mathbb{V}_{\Gamma} : \int_{\Gamma_{f}^{(i)}} \unitvec{t} \cdot \grad w \ ds = 0, \ 1 \leq i \leq N-1 \right\}.
\end{align}
Characterizing $\mathbb{W}_{\Gamma}$ as a subspace of $\mathbb{V}_{\Gamma}$ in this way is helpful in removing the indeterminancy in $\bdd{\theta}_X$ present in \cref{eq:thetap-necessary-conds-abstract}. In particular, the identity \cref{eq:proof:ftc-wp} means that if a function $w \in \mathbb{V}_{\Gamma}$ satisfies the conditions appearing in \cref{eq:wp-integral-zero-chara}, then $w|_{\Gamma_{cs}^{(i)}} = w|_{\Gamma_{cs}^{(1)}} = 0$ for all $i$, and hence $w \in \mathbb{W}_{\Gamma}$. Consequently, we can ensure that $\bdd{\theta}_X = \grad w_X$ belongs to $\grad \mathbb{W}_{\Gamma}$ by enforcing the constraints
\begin{align}
	\label{eq:thetap-gammaf-integral-constr}
	\int_{\Gamma_{f}^{(i)}} \unitvec{t} \cdot \bdd{\theta}_{X}  \ ds = 0, \qquad 1 \leq i \leq N-1.
\end{align}
These constraints are imposed using Lagrange multipliers which, in conjunction with again treating the rotation free constraint in \cref{eq:thetap-necessary-conds} as in the previous section, gives the following generalization of the scheme \cref{eq:zp-morgan-scott,eq:stokes-system-morgan-scott,eq:morgan-scott-final-projection} to the case $\Gamma_{cs}$ is not connected ($N > 1$): 

\begin{algorithm}[htb]
	\caption{}
	\label{alg:abstract-method}
	\begin{algorithmic}[1]
		\Ensure{$(\tilde{z}_X, \bdd{\theta}_{X}, r_{X}, \vec{\kappa}, \tilde{w}_X) \in \tilde{\mathbb{W}}_{\Gamma} \times \mathbbb{G}_{\Gamma} \times \rot \mathbbb{G}_{\Gamma} \times \mathbb{R}^{N-1} \times \tilde{\mathbb{W}}_{\Gamma}$}
		
		\State{Pre-process the data $F$:
			\begin{align}
				\label{eq:zp-morgan-scott-2}
				(\grad \tilde{z}_X, \grad \tilde{v}) &= F(\tilde{v}) \qquad \forall \tilde{v} \in \tilde{\mathbb{W}}_{\Gamma}.
		\end{align}
		}
		
		\State{Solve a Stokes-like problem: 
			\begin{subequations}
				\label{eq:stokes-system-gen-morgan-scott}
				\begin{alignat}{2}
					\label{eq:stokes-system-gen-morgan-scott-1}
					a(\bdd{\theta}_{X}, \bdd{\psi}) + (\rot \bdd{\psi}, r_{X}) + \sum_{i=1}^{N-1} (\unitvec{t} \cdot \bdd{\psi}, \kappa_i)_{\Gamma_f^{(i)}} &= (\grad \tilde{z}_X, \bdd{\psi}) \qquad & &\forall \bdd{\psi} \in \mathbbb{G}_{\Gamma}, \\
					\label{eq:stokes-system-gen-morgan-scott-2}
					(\rot \bdd{\theta}_{X}, s)  &= 0 \qquad & &\forall s \in \rot \mathbbb{G}_{\Gamma}, \\
					\label{eq:stokes-system-gen-morgan-scott-3}
					\sum_{i=1}^{N-1}(\unitvec{t} \cdot \bdd{\theta}_{X}, \mu_i)_{\Gamma_f^{(i)}} &= 0 \qquad & &\forall \vec{\mu} \in \mathbb{R}^{N-1}.
				\end{alignat}
		\end{subequations}
		}
	
		\State{Post-process the gradient $\bdd{\theta}_{X}$:
			\begin{align}
				\label{eq:morgan-scott-final-projection-2}
				(\grad \tilde{w}_X, \grad v) &= (\bdd{\theta}_{X}, \grad v) \qquad \forall v \in \tilde{\mathbb{W}}_{\Gamma}.
		\end{align}}
	\end{algorithmic}
\end{algorithm}

In the case where $\Gamma_{cs}$ is connected ($N=1$), the sums in \cref{eq:stokes-system-gen-morgan-scott} are not present and \cref{alg:abstract-method} reduces to \cref{eq:zp-morgan-scott,eq:stokes-system-morgan-scott,eq:morgan-scott-final-projection}. Similarly to the previous section, \cref{eq:stokes-system-gen-morgan-scott} implies that $\bdd{\theta}_{X} = \grad w_X$ satisfies \cref{eq:morgan-scott-inter-with-zp}. Equally well, the displacement $w_X \in \mathbb{W}_{\Gamma}$ is again given by $w_X = \tilde{w}_X$, where $\tilde{w}_X$ is obtained by post-processing as in \cref{eq:morgan-scott-final-projection-2}. The following generalization of \cref{lem:morgan-scott-simplified} holds:
\begin{theorem}
	\label{thm:morgan-scott-simplified-ext}
	Let $\mathbb{W} \subseteq H^2(\Omega)$, $\mathbbb{G}_{\Gamma} \subseteq \bdd{\Theta}_{\Gamma}(\Omega)$, and $\tilde{\mathbb{W}}_{\Gamma} \subset H^1(\Omega)$ be any conforming subspaces satisfying conditions \ref{hp:wp-constants-boundary-assumption}, \ref{hp:tildewp-condition}, and \ref{hp:stokes-complex-bcs-discon-fem}. Then, \cref{alg:abstract-method} delivers a unique function $\tilde{w}_X \in \mathbb{W}_{\Gamma}$ that satisfies $\grad \tilde{w}_X = \bdd{\theta}_{X}$ and $\tilde{w}_X = w_X$, where $w_X$ is the $H^2$-conforming $\mathbb{W}_{\Gamma}$ approximation defined by \cref{eq:biharmonic-primal-fem}.
\end{theorem}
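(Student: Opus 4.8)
The plan is to work through the three steps of \Cref{alg:abstract-method} in order, establishing unique solvability of each, and then to read off the stated identities $\grad\tilde w_X=\bdd\theta_X$ and $\tilde w_X=w_X$. Steps~1 and~3 are routine: \cref{eq:zp-morgan-scott-2} and \cref{eq:morgan-scott-final-projection-2} are coercive $H^1$-conforming problems — the Dirichlet form $(\grad\cdot,\grad\cdot)$ is an inner product on $\tilde{\mathbb W}_{\Gamma}$ since its members vanish on $\Gamma_{cs}$ (a set of positive measure), so Poincar\'e's inequality applies — whose right-hand sides are bounded linear functionals on $\tilde{\mathbb W}_{\Gamma}$ (that for \cref{eq:morgan-scott-final-projection-2} being bounded once $\bdd\theta_X\in\bdd H^1(\Omega)$ is in hand). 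The Lax--Milgram lemma therefore produces a unique $\tilde z_X$, and, for any given $\bdd\theta_X$, a unique $\tilde w_X$.

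The crux is the unique solvability of the Stokes-like saddle-point system \cref{eq:stokes-system-gen-morgan-scott}, which I would verify through the Babu\v{s}ka--Brezzi conditions with constraint form $b(\bdd\psi,(s,\vec\mu)):=(\rot\bdd\psi,s)+\sum_{i=1}^{N-1}(\unitvec t\cdot\bdd\psi,\mu_i)_{\Gamma_f^{(i)}}$ on $\mathbbb{G}_{\Gamma}\times(\rot\mathbbb{G}_{\Gamma}\times\mathbb R^{N-1})$ (the form $a$ being bounded on $\mathbbb{G}_{\Gamma}\subseteq\bdd H^1(\Omega)$ by \cref{eq:a-bounded}). The first point is that $\Kernel b=\grad\mathbb{W}_{\Gamma}$: if $\bdd\psi\in\Kernel b$, then choosing $s=\rot\bdd\psi$ and then $\vec\mu$ arbitrary forces $\rot\bdd\psi\equiv0$ and $\int_{\Gamma_f^{(i)}}\unitvec t\cdot\bdd\psi\,ds=0$ for $1\le i\le N-1$; by the exactness in \ref{hp:stokes-complex-bcs-discon-fem}, $\bdd\psi=\grad v$ with $v\in\mathbb V_{\Gamma}$, and then identity \cref{eq:proof:ftc-wp} together with $v|_{\Gamma_{cs}^{(1)}}=0$ forces $v|_{\Gamma_{cs}^{(i)}}=0$ for all $i$, i.e.\ $v\in\mathbb{W}_{\Gamma}$ by \cref{eq:wp-integral-zero-chara}; the reverse inclusion is immediate from \cref{eq:proof:ftc-wp} and $v|_{\Gamma_{cs}}=0$. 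On this kernel, \cref{eq:a-elliptic} and the elementary bound $\|\grad v\|_1\le\sqrt2\,\|v\|_2$ give $a(\grad v,\grad v)\ge\alpha\|v\|_2^2\ge\tfrac{\alpha}{2}\|\grad v\|_1^2$, the required kernel coercivity. The second point is the inf-sup condition for $b$: given $(s,\vec\mu)$, set $\vec\omega:=(\mu_1,\dots,\mu_{N-1},\int_\Omega s\,d\bdd x-\sum_{i=1}^{N-1}\mu_i)$, which satisfies \cref{eq:kappaij-constraint}, and apply \Cref{lem:invert-rot-with-free-averages} with $r=s$ to obtain $\bdd\psi\in\mathbbb{G}_{\Gamma}$ with $\rot\bdd\psi=s$ and $\int_{\Gamma_f^{(i)}}\unitvec t\cdot\bdd\psi=\mu_i$ for $i<N$; then $b(\bdd\psi,(s,\vec\mu))=\|s\|^2+|\vec\mu|^2$ with $\|\bdd\psi\|_1$ bounded in terms of $\|s\|+|\vec\mu|$. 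The Brezzi theory then yields a unique solution $(\bdd\theta_X,r_X,\vec\kappa)$.

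To identify the outputs, first test \cref{eq:stokes-system-gen-morgan-scott-2} with $s=\rot\bdd\theta_X$ and \cref{eq:stokes-system-gen-morgan-scott-3} with arbitrary $\vec\mu$ to get $\bdd\theta_X\in\Kernel b=\grad\mathbb{W}_{\Gamma}$, say $\bdd\theta_X=\grad w$ with $w\in\mathbb{W}_{\Gamma}$. Next test \cref{eq:stokes-system-gen-morgan-scott-1} with $\bdd\psi=\grad\phi$, $\phi\in\mathbb{W}_{\Gamma}$: since $\rot\grad\phi\equiv0$ and, by \cref{eq:proof:ftc-wp}, $\int_{\Gamma_f^{(i)}}\unitvec t\cdot\grad\phi=\phi|_{\Gamma_{cs}^{(i+1)}}-\phi|_{\Gamma_{cs}^{(i)}}=0$, the multiplier terms vanish and $a(\grad w,\grad\phi)=(\grad\tilde z_X,\grad\phi)$. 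Since $\mathbb{W}_{\Gamma}\subseteq\tilde{\mathbb W}_{\Gamma}$ by \ref{hp:tildewp-condition}, \cref{eq:zp-morgan-scott-2} gives $(\grad\tilde z_X,\grad\phi)=F(\phi)$, so $a(\grad w,\grad\phi)=F(\phi)$ for all $\phi\in\mathbb{W}_{\Gamma}$; the well-posedness of \cref{eq:biharmonic-primal-fem} (coercivity \cref{eq:a-elliptic}) then forces $w=w_X$, i.e.\ $\bdd\theta_X=\grad w_X$. Finally, \cref{eq:morgan-scott-final-projection-2} reads $(\grad\tilde w_X,\grad v)=(\grad w_X,\grad v)$ for all $v\in\tilde{\mathbb W}_{\Gamma}$, and since $w_X\in\mathbb{W}_{\Gamma}\subseteq\tilde{\mathbb W}_{\Gamma}$ is an admissible competitor, the uniqueness established for Step~3 gives $\tilde w_X=w_X\in\mathbb{W}_{\Gamma}$ and hence $\grad\tilde w_X=\bdd\theta_X$.

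The main obstacle is the inf-sup estimate in Step~2: \Cref{lem:invert-rot-with-free-averages} already supplies surjectivity of the operator associated with $b$, but to obtain a genuine inf-sup bound one must also check that the construction in its proof ($\bdd\theta=\bdd\psi+\grad v$) can be arranged so that $\|\bdd\theta\|_1$ is controlled by a constant times $\|s\|+|\vec\mu|$, i.e.\ that it furnishes a bounded right inverse; for finite-dimensional $\mathbbb{G}_{\Gamma}$ this is automatic, but in general it requires stable choices of the two ingredients. Everything else is bookkeeping around the exact sequence \ref{hp:stokes-complex-bcs-discon-fem} and the characterization \cref{eq:wp-integral-zero-chara}.
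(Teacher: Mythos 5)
Your argument is correct, and its second half (testing \cref{eq:stokes-system-gen-morgan-scott} to get $\bdd{\theta}_X\in\grad\mathbb{W}_{\Gamma}$ via \ref{hp:stokes-complex-bcs-discon-fem} and \cref{eq:wp-integral-zero-chara}, then identifying $w=w_X$ through \cref{eq:zp-morgan-scott-2} and the uniqueness of \cref{eq:biharmonic-primal-fem}, then reading off $\tilde{w}_X=w_X$ from \cref{eq:morgan-scott-final-projection-2}) is essentially identical to the paper's. Where you diverge is the solvability of the Stokes-like system: you invoke the full Brezzi machinery, establishing kernel coercivity on $\Kernel b=\grad\mathbb{W}_{\Gamma}$ and an inf-sup condition for $b$ built from \cref{lem:invert-rot-with-free-averages} plus a compactness argument in finite dimensions, whereas the paper sidesteps inf-sup entirely: since all spaces are finite dimensional, it suffices to show the homogeneous system has only the trivial solution, which follows by testing with $\bdd{\psi}=\grad u_X$ and \cref{eq:a-elliptic} to kill $\bdd{\theta}_X$, and then using \cref{lem:invert-rot-with-free-averages} to choose $\bdd{\psi}$ realizing $(r_X,\vec{\kappa})$, forcing $r_X\equiv 0$ and $\vec{\kappa}=\vec{0}$. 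Your route is heavier but buys a quantitative stability constant, which the paper only needs later (the constant $\beta_X$ of \cref{eq:stokes-morgan-scott-gen-inf-sup}, shown positive by exactly your compactness observation, and bounded uniformly for Morgan--Scott in \cref{cor:stokes-morgan-scott-gen-inf-sup}) for the iterated penalty analysis; you correctly flag that \cref{lem:invert-rot-with-free-averages} alone gives surjectivity but not a norm bound, and that finite dimensionality closes that gap here. One small caveat: your appeal to Poincar\'e for steps 1 and 3 assumes functions in $\tilde{\mathbb{W}}_{\Gamma}$ vanish on $\Gamma_{cs}$, which is not literally part of \ref{hp:tildewp-condition}; this holds for every concrete choice in the paper (and the paper's own uniqueness step, $F\equiv 0\Rightarrow\tilde{z}_X\equiv 0$, tacitly needs the same definiteness), so it is a shared implicit assumption rather than a defect of your proof, but it would be worth stating explicitly.
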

\begin{proof}
	Assume first that $(\tilde{z}_X, \bdd{\theta}_{X}, r_{X}, \vec{\kappa}, \tilde{w}_X)$ appearing in \cref{alg:abstract-method} exist and are unique. Choosing $s = \rot \bdd{\theta}_{X}$ in \cref{eq:stokes-system-gen-morgan-scott-2} and $\mu_i = (\unitvec{t} \cdot \bdd{\theta}_{X}, 1)_{\Gamma_{f}^{(i)}}$ in \cref{eq:stokes-system-gen-morgan-scott-3} gives $\rot \bdd{\theta}_{X} \equiv 0$ and $(\unitvec{t} \cdot \bdd{\theta}_{X}, 1)_{\Gamma_{f}^{(i)}} = 0$, $1 \leq i \leq N-1$. Consequently, $\bdd{\theta}_{X} = \grad u_X$ for some $u_X \in \mathbb{W}_{\Gamma}$ by the exactness of \cref{eq:stokes-complex-bcs-discon-fem} and identity \cref{eq:wp-integral-zero-chara}. Thus, \cref{eq:morgan-scott-final-projection-2} reads
	\begin{align*}
		(\grad \tilde{w}_X, \grad v) = (\grad u_X, \grad v) \qquad \forall v \in \tilde{\mathbb{W}}_{\Gamma}.
	\end{align*} 
	Since $u_X \in \mathbb{W}_{\Gamma} \subseteq \tilde{\mathbb{W}}_{\Gamma}$, $\tilde{w}_X = u_X$. Choosing $\bdd{\psi} \in \grad \mathbb{W}_{\Gamma}$ in \cref{eq:stokes-system-gen-morgan-scott-2} and using \cref{eq:zp-morgan-scott-2} then gives
	\begin{align*}
		a(\grad \tilde{w}_X, \grad v) = (\grad \tilde{z}_X, \grad v) = F(v) \qquad \forall v \in \mathbb{W}_{\Gamma},
	\end{align*}
	and hence, thanks to the uniqueness of \cref{eq:biharmonic-primal-fem}, we conclude that $\tilde{w}_X = w_X$.	
	
	We now show that $(\tilde{z}_X, \bdd{\theta}_{X}, r_{X}, \vec{\kappa}, \tilde{w}_X)$ appearing in \cref{alg:abstract-method} exist and are unique. It suffices to show that the only solution to \cref{eq:zp-morgan-scott-2,eq:stokes-system-gen-morgan-scott,eq:morgan-scott-final-projection-2} in the case $F \equiv 0$, vanishes identically. If $F \equiv 0$, then \cref{eq:zp-morgan-scott-2} means that $\tilde{z}_X \equiv 0$. 
	As a result, the triplet $(\bdd{\theta}_{X}, r_{X}, \vec{\kappa}) \in \mathbbb{G}_{\Gamma} \times \rot \mathbbb{G}_{\Gamma} \times \mathbb{R}^{N-1}$ satisfies \cref{eq:stokes-system-gen-morgan-scott} with $\tilde{z}_X \equiv 0$. As shown above, $\bdd{\theta}_{X} = \grad u_X$ for some $u_X \in \mathbb{W}_{\Gamma}$.
	 Choosing $\bdd{\psi} = \grad u_X$ in \cref{eq:stokes-system-gen-morgan-scott-1} and using the ellipticity of $a(\grad \cdot,\grad \cdot)$ \cref{eq:a-elliptic}, we conclude $u_X \equiv 0$. Thanks to \cref{lem:invert-rot-with-free-averages}, we may choose $\bdd{\psi} \in \mathbbb{G}_{\Gamma}$ in \cref{eq:stokes-system-gen-morgan-scott-1} such that $\rot \bdd{\psi} = r_{X}$ and $(\unitvec{t}\cdot \bdd{\psi}, 1)_{\Gamma_f^{(i)}} = \kappa_i$. Thus, $r_{X} \equiv 0$ and $\vec{\kappa} = \vec{0}$. Consequently, solutions to \cref{eq:stokes-system-gen-morgan-scott} are unique and hence exist since the spaces $\mathbbb{G}_{\Gamma}$, $\rot \mathbbb{G}_{\Gamma}$, and $\mathbb{R}^{N-1}$ are finite dimensional. Hence, the data for \cref{eq:morgan-scott-final-projection} vanishes and we conclude that $\tilde{w}_X \equiv 0$.
\end{proof}

\noindent In the remainder of this section, we apply \cref{thm:morgan-scott-simplified-ext} to various common choices of $H^2$-conforming finite element spaces.

\subsection{Morgan-Scott elements}
\label{sec:morgan-scott}

We use \cref{thm:morgan-scott-simplified-ext} to extend the result for the Morgan-Scott element in \cref{lem:morgan-scott-simplified} to the case $\Gamma_{cs}$ is not connected ($N > 1$):
\begin{lemma}
	\label{lem:vp-specify-constants}
	Let $\mathbb{W} = W^p$ \cref{eq:wp-morgan-scott}, $\mathbbb{G}_{\Gamma} = \bdd{G}_{\Gamma}^{p-1}$ \cref{eq:thetap-morgan-scott}, and $\tilde{\mathbb{W}} = \tilde{W}_{\Gamma}^p$ \cref{eq:tildewp-morgan-scott}. If $p \geq 5$, then \ref{hp:wp-constants-boundary-assumption}, \ref{hp:tildewp-condition}, and \ref{hp:stokes-complex-bcs-discon-fem} hold. 
\end{lemma}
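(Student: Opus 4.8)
The plan is to verify the three conditions \ref{hp:wp-constants-boundary-assumption}, \ref{hp:tildewp-condition}, and \ref{hp:stokes-complex-bcs-discon-fem} in turn, relying on the fact that for $p \geq 5$ the Morgan--Scott space $W^p$ is a genuine $C^1$ finite element space admitting a nodal basis \cite{MorganScott75}: its degrees of freedom comprise the value, gradient, and Hessian at each vertex, values and normal derivatives at interior points of each edge, and interior moments on each triangle. In particular, on a boundary edge $e$ with unit tangent $\unitvec{t}$ and outward unit normal $\unitvec{n}$, the trace $v|_e$ and normal derivative $\partial_n v|_e$ of $v \in W^p$ are the polynomials determined by the values of $v$, $\partial_t v$, $\partial_{tt} v$, $\partial_n v$, $\partial_t \partial_n v$ at the two endpoints of $e$ together with the value and normal-derivative degrees of freedom associated with $e$. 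Condition \ref{hp:tildewp-condition} is then immediate, since $\tilde{W}^p_{\Gamma}$ consists of continuous (hence $H^1$-conforming) piecewise polynomials while every $v \in W^p_{\Gamma} = W^p \cap H^2_{\Gamma}(\Omega)$ is a $C^1$ piecewise $\mathcal{P}_p$ function vanishing on $\Gamma_{cs}$, which is exactly the defining property of $\tilde{W}^p_{\Gamma}$; the same observations also confirm the standing inclusion $\grad W^p_{\Gamma} \subseteq \bdd{G}^{p-1}_{\Gamma}$, because $v|_{\Gamma_{cs}} = 0$ forces $\partial_t v|_{\Gamma_{cs}} = 0$ and $\partial_n v|_{\Gamma_c} = 0$, so $\grad v|_{\Gamma_c} = \bdd{0}$ and $\unitvec{t} \cdot \grad v|_{\Gamma_s} = 0$.

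For \ref{hp:wp-constants-boundary-assumption}, I would first observe that the side condition $\grad v \in \bdd{G}^{p-1}_{\Gamma}$ is automatic once $v \in W^p$ is constant on each $\Gamma_{cs}^{(i)}$ and satisfies $\partial_n v|_{\Gamma} = 0$, by the same computation as in the previous paragraph; so it suffices to produce $v \in W^p$ with $v|_{\Gamma_{cs}^{(i)}} = \mu_i$ and $\partial_n v|_{\Gamma} = 0$. I would obtain $v$ by Morgan--Scott nodal interpolation of a suitable smooth function. Concretely, choose $\phi \in C^2(\bar{\Omega})$ that equals $\mu_i$ in a relative neighborhood of each $\Gamma_{cs}^{(i)}$ and satisfies $\partial_n \phi|_{\Gamma} = 0$ --- such $\phi$ exists by a routine construction (take $\sum_i \mu_i \chi_i$ with smooth cutoffs $\chi_i \equiv 1$ near $\Gamma_{cs}^{(i)}$ and supported away from the other components, then add a smooth correction supported near $\Gamma_f$, vanishing on $\Gamma$, that removes the normal derivative there) --- and set $v := I_h \phi$. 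On an edge $e \subset \Gamma_{cs}^{(i)}$, every datum used by $I_h$ to reconstruct $v|_e$ and $\partial_n v|_e$ coincides with the corresponding datum of the constant $\mu_i$ and of $0$ (since $\phi$ and all the derivatives in question agree there with those of $\mu_i$), so $v|_e \equiv \mu_i$ and $\partial_n v|_e \equiv 0$; on an edge $e \subset \Gamma_f$, the data reconstructing $\partial_n v|_e$ --- namely the endpoint values of $\partial_n \phi$ and $\partial_t \partial_n \phi$ and the interior edge values of $\partial_n \phi$ --- all vanish because $\partial_n \phi|_{\Gamma} = 0$, so $\partial_n v|_e \equiv 0$. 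Hence $v|_{\Gamma_{cs}^{(i)}} = \mu_i$ and $\partial_n v|_{\Gamma} = 0$, as required.

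For \ref{hp:stokes-complex-bcs-discon-fem}, the sequence \cref{eq:stokes-complex-bcs-discon-fem} is a complex because $\rot \grad = 0$, and $\rot : \bdd{G}^{p-1}_{\Gamma} \to \rot \bdd{G}^{p-1}_{\Gamma}$ is surjective by definition, so only two points need checking. First, $\grad$ is injective on $\mathbb{V}_{\Gamma}$: if $\grad v = \bdd{0}$ then $v$ is constant, and $v|_{\Gamma_{cs}^{(1)}} = 0$ forces $v \equiv 0$. Second, $\Kernel(\rot|_{\bdd{G}^{p-1}_{\Gamma}}) \subseteq \grad \mathbb{V}_{\Gamma}$: given $\bdd{\theta} \in \bdd{G}^{p-1}_{\Gamma}$ with $\rot \bdd{\theta} \equiv 0$, simple-connectedness of $\Omega$ yields $v \in H^1(\Omega)$ with $\grad v = \bdd{\theta}$; since $\grad v$ is continuous and piecewise $\mathcal{P}_{p-1}$, $v$ is continuous and piecewise $\mathcal{P}_{p}$, and being a piecewise polynomial lying in $H^2(\Omega)$ it is $C^1$, so $v \in W^p$. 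Moreover $\unitvec{t} \cdot \grad v = \unitvec{t} \cdot \bdd{\theta}$ vanishes on $\Gamma_{cs}$ (on $\Gamma_c$ because $\bdd{\theta}|_{\Gamma_c} = \bdd{0}$, on $\Gamma_s$ because $\unitvec{t} \cdot \bdd{\theta}|_{\Gamma_s} = 0$), so $v$ is constant on each connected component $\Gamma_{cs}^{(i)}$; after normalizing the potential so that this constant is $0$ on $\Gamma_{cs}^{(1)}$, and using $\partial_n v|_{\Gamma_c} = \unitvec{n} \cdot \bdd{\theta}|_{\Gamma_c} = 0$, we obtain $v \in \mathbb{V}_{\Gamma}$ and hence $\bdd{\theta} = \grad v \in \grad \mathbb{V}_{\Gamma}$, which gives exactness.

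I expect the main obstacle to be \ref{hp:wp-constants-boundary-assumption}, or rather the Morgan--Scott bookkeeping behind it: one must be sure that, for $p \geq 5$, the boundary trace and boundary normal derivative of a function in $W^p$ really are governed by the boundary (vertex and edge) degrees of freedom, and that $I_h \phi$ is a well-defined element of $W^p$. This is exactly the content of the Morgan--Scott nodal-basis construction, where the dependence of the nodal set on singular vertices has to be accounted for; since those complications are confined to interior vertices, they do not interfere with the prescribed boundary data, but this point should be invoked carefully from \cite{MorganScott75}. The remaining pieces --- \ref{hp:tildewp-condition} and the exactness in \ref{hp:stokes-complex-bcs-discon-fem} --- amount to topology together with the standard equivalence between $H^2$-conformity and $C^1$-continuity for piecewise polynomials.
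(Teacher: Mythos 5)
Your proof is correct and takes essentially the same route as the paper: \ref{hp:tildewp-condition} and the inclusion $\grad W^p_{\Gamma} \subset \bdd{G}^{p-1}_{\Gamma}$ hold by definition, \ref{hp:wp-constants-boundary-assumption} is verified through the Morgan--Scott nodal degrees of freedom, and \ref{hp:stokes-complex-bcs-discon-fem} follows by producing a potential for a rot-free field via simple-connectedness, noting it is a $C^1$ piecewise polynomial constant on each $\Gamma_{cs}^{(i)}$, and normalizing it on $\Gamma_{cs}^{(1)}$. The only cosmetic difference is in \ref{hp:wp-constants-boundary-assumption}, where the paper directly assigns the function-value degrees of freedom on $\Gamma_{cs}^{(i)}$ the value $\mu_i$ and sets all remaining degrees of freedom to zero, thereby avoiding your auxiliary smooth function $\phi$ and its normal-derivative correction near $\Gamma_f$.
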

\begin{proof}
	By definition, $\grad W_{\Gamma}^p \subset \bdd{G}_{\Gamma}^{p-1}$. Now let $\{ \mu_i \}_{i=1}^{N} \subset \mathbb{R}$, $N \geq 1$, be given. We define a function belonging to the Morgan-Scott space of order 5 by setting all function value degrees of freedom \cite{MorganScott75} corresponding to points located on $\Gamma_{cs}^{(i)}$ to $\mu_i$, $1 \leq i \leq N$, and setting the remaining degrees of freedom in \cite{MorganScott75} to zero. The resulting function $w \in W^5$ satisfies $w|_{\Gamma_{cs}^{(i)}} = \mu_i$, $1 \leq i \leq N$, $\partial_n w|_{\Gamma} = 0$, and $\grad w \in \bdd{G}_{\Gamma}^{4}$. Thus, \ref{hp:wp-constants-boundary-assumption} holds.
	
	Condition \ref{hp:tildewp-condition} follows by definition. We now turn to the exactness of \cref{eq:stokes-complex-bcs-discon-fem}. Let $\bdd{\theta} \in  \bdd{G}^{p-1}_{\Gamma}$ satisfy $\rot \bdd{\theta} \equiv 0$. Since $\Omega$ is simply-connected, $\bdd{\theta} = \grad w$ for some $w \in H^1(\Omega)$ by \cite[Theorem 3.1]{GiraultRaviart86}. Since $\bdd{G}^{p-1}_{\Gamma} \subset \bdd{H}^1(\Omega)$, we have
	\begin{align*}
		w \in \{ u \in H^2(\Omega) : u|_{K} \in \mathcal{P}_p(K) \ \forall K \in \mathcal{T}, \ u|_{\Gamma_{cs}^{(i)}} \in \mathbb{R}, \ 1 \leq i \leq N \}.	
	\end{align*}
	Consequently, the function $v = w - w|_{\Gamma_{cs}^{(1)}}$ satisfies $v \in V_{\Gamma}^p$ and $\grad v = \bdd{\theta}$. The exactness of \cref{eq:stokes-complex-bcs-discon-fem} now follows.
\end{proof}
In view of \cref{lem:vp-specify-constants}, \cref{thm:morgan-scott-simplified-ext} shows that \cref{alg:abstract-method} produces the $H^2$-conforming Morgan-Scott approximation defined by \cref{eq:biharmonic-primal-fem} whenever $p \geq 5$.

\begin{remark}
	\label{rem:morgan-scott-low-order}
	\Cref{lem:vp-specify-constants} also holds for $1 \leq p \leq 4$ when $\Gamma_{cs}$ is connected ($N = 1$). In particular, $\grad W_{\Gamma}^p \subset \bdd{G}_{\Gamma}^{p-1}$ holds for any $p \geq 1$, and, given $\mu \in \mathbb{R}$, the function $w=\mu \in W^p$ satisfies \ref{hp:wp-constants-boundary-assumption}. Moreover, 	\ref{hp:tildewp-condition} follows by definition, and the same arguments in the proof of \cref{lem:vp-specify-constants} show that \ref{hp:stokes-complex-bcs-discon-fem} holds. This means that \cref{alg:abstract-method} can be used to compute the $H^2$-conforming Morgan-Scott approximation in the case $p < 5$. An example is presented in \cref{sec:low-order-example}.
\end{remark}

\subsection{Argyris and TUBA elements}
\label{sec:argyris}

The classic $H^2$-conforming element is the Argyris element \cite{Argyris68} which consists of degree 5 polynomials that have $C^2$ degrees of freedom element vertices. This is the lowest order element in the TUBA family that extends to polynomials of degree $p \geq 5$ as follows:
\begin{subequations}
	\label{eq:wp-argyris}
	\begin{align}
		A^p &:= \{ v \in C^1(\Omega) : v|_{K} \in \mathcal{P}_{p}(K) \ \forall K \in \mathcal{T}, \text{ $v$ is $C^2$ at element vertices} \}, \\
		A_{\Gamma}^p &:= A^p \cap H^2_{\Gamma}(\Omega).
	\end{align}
\end{subequations}
If we choose $\mathbb{W} = A^p$, the extra $C^2$-smoothness at the element vertices is inherited by the gradients of functions in $A_{\Gamma}^p$ which suggests choosing the corresponding gradient space $\mathbbb{G}_{\Gamma}$ to be the Hermite finite element space:
\begin{align}
	\label{eq:thetap-argyris}
	\bdd{H}^{p-1}_{\Gamma} := \{ \bdd{\theta} \in \bdd{\Theta}_{\Gamma}(\Omega) : \bdd{\theta}|_{K} \in [\mathcal{P}_{p-1}(K)]^2 \ \forall K \in \mathcal{T}, \text{  $\bdd{\theta}$ is $\bdd{C}^1$ at element vertices} \}.
\end{align}  
As before, we choose $\tilde{\mathbb{W}}_{\Gamma}$ to be the $H^1$-conforming space $\tilde{W}_{\Gamma}^p$ defined in \cref{eq:tildewp-morgan-scott}. The following lemma shows that this choice satisfies \ref{hp:wp-constants-boundary-assumption}, \ref{hp:tildewp-condition}, and \ref{hp:stokes-complex-bcs-discon-fem}:
\begin{lemma}
	\label{lem:argyris-satis-conds13}
	The spaces $\mathbb{W} = A^p$, $\mathbbb{G}_{\Gamma} = \bdd{H}_{\Gamma}^{p-1}$, and $\tilde{\mathbb{W}}_{\Gamma} = \tilde{W}_{\Gamma}^p$, $p \geq 5$,  satisfy \ref{hp:wp-constants-boundary-assumption}, \ref{hp:tildewp-condition}, and \ref{hp:stokes-complex-bcs-discon-fem}.
\end{lemma}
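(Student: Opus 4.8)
The plan is to mimic the proof of \cref{lem:vp-specify-constants} for the Morgan-Scott space, with the Argyris/TUBA vertex and edge degrees of freedom taking the place of the Morgan-Scott function-value degrees of freedom, and with the elementary observation that $\grad w$ is $\bdd{C}^1$ at a vertex precisely when $w$ is $C^2$ there. So I would verify \ref{hp:wp-constants-boundary-assumption}, \ref{hp:tildewp-condition}, \ref{hp:stokes-complex-bcs-discon-fem} in turn.

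For \ref{hp:wp-constants-boundary-assumption}, given $\{\mu_i\}_{i=1}^N\subset\mathbb{R}$ I would take $v$ to be the element of the degree-$5$ Argyris space $A^5\subseteq A^p$ determined by setting every vertex-value degree of freedom at a vertex of $\Gamma_{cs}^{(i)}$ equal to $\mu_i$ and setting all the remaining degrees of freedom -- the vertex gradient and Hessian values and the edge normal-derivative values -- equal to zero. Since the trace of a quintic on an edge is fixed by the two endpoint values together with the endpoint first and second tangential derivatives, this forces $v|_{\Gamma_{cs}^{(i)}}=\mu_i$; since the normal derivative along an edge is fixed by the endpoint normal-derivative data and the edge-midpoint normal derivative, it forces $\partial_n v|_\Gamma=0$. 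It then remains to check $\grad v\in\bdd{H}_\Gamma^{p-1}$: $v\in H^2(\Omega)$ gives $\grad v\in\bdd{H}^1(\Omega)$; $v$ being $C^2$ at the vertices makes $\grad v$ $\bdd{C}^1$ there; and constancy of $v$ on each $\Gamma_{cs}^{(i)}$ together with $\partial_n v|_\Gamma=0$ gives $\unitvec{t}\cdot\grad v|_{\Gamma_s}=0$ and $\grad v|_{\Gamma_c}=\bdd{0}$, so $\grad v\in\bdd{\Theta}_\Gamma(\Omega)$ and hence $\grad v\in\bdd{H}_\Gamma^{p-1}$. Condition \ref{hp:tildewp-condition} requires nothing: $A_\Gamma^p$ consists of $C^0$ piecewise-$\mathcal{P}_p$ functions vanishing on $\Gamma_{cs}$, so $A_\Gamma^p\subseteq\tilde{W}_\Gamma^p\subset H^1(\Omega)$.

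For the exactness of \cref{eq:stokes-complex-bcs-discon-fem}, with $\mathbb{V}_\Gamma$ the space \cref{eq:vp-def-morganscott} associated with $\mathbb{W}=A^p$, I would note that $\rot$ is surjective onto $\rot\bdd{H}_\Gamma^{p-1}$ by definition, that $\grad$ is injective on $\mathbb{V}_\Gamma$ since a constant in $\mathbb{V}_\Gamma$ must vanish on $\Gamma_{cs}^{(1)}$, and that $\grad\mathbb{V}_\Gamma\subseteq\bdd{H}_\Gamma^{p-1}$ by the same trace computation just used (applied now to $v\in A^p$ that is $C^2$ at vertices, constant on each $\Gamma_{cs}^{(i)}$, with $\partial_n v|_{\Gamma_c}=0$). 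The only substantive step is exactness at $\bdd{H}_\Gamma^{p-1}$: showing that $\bdd{\theta}\in\bdd{H}_\Gamma^{p-1}$ with $\rot\bdd{\theta}\equiv0$ lies in $\grad\mathbb{V}_\Gamma$. Here, since $\Omega$ is simply connected, $\bdd{\theta}=\grad w$ for some $w\in H^1(\Omega)$ by \cite[Theorem 3.1]{GiraultRaviart86}; since $\bdd{\theta}$ is piecewise $[\mathcal{P}_{p-1}]^2$ and lies in $\bdd{H}^1(\Omega)$, the potential $w$ is piecewise $\mathcal{P}_p$ with $\grad w$ continuous, so $w\in C^1(\Omega)\cap H^2(\Omega)$, and since $\bdd{\theta}$ is $\bdd{C}^1$ at vertices the Hessian of $w$ is continuous at the vertices, i.e. $w\in A^p$; finally $\unitvec{t}\cdot\bdd{\theta}|_{\Gamma_s}=0$ and $\bdd{\theta}|_{\Gamma_c}=\bdd{0}$ make $w$ constant on each connected component $\Gamma_{cs}^{(i)}$ (using continuity of $w$ across the junctions of the $\Gamma_c$ and $\Gamma_s$ pieces) with $\partial_n w|_{\Gamma_c}=0$, so $v:=w-w|_{\Gamma_{cs}^{(1)}}\in\mathbb{V}_\Gamma$ and $\grad v=\bdd{\theta}$, which gives the exactness and hence \ref{hp:stokes-complex-bcs-discon-fem}.

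The hardest part will be the bookkeeping in the first step -- pinning down which Argyris/TUBA degrees of freedom govern the edge trace and the edge normal derivative so that the prescription above genuinely yields $v|_{\Gamma_{cs}^{(i)}}=\mu_i$ and $\partial_n v|_\Gamma=0$ -- together with the regularity equivalence ``$\grad w$ is $\bdd{C}^1$ at a vertex $\iff$ $w$ is $C^2$ there'' used in the exactness argument; with those two points in hand the rest is routine and parallels \cref{lem:vp-specify-constants}.
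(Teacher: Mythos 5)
Your proposal is correct and follows essentially the same route as the paper: the same Argyris degree-of-freedom construction (vertex values $\mu_i$ on $\Gamma_{cs}^{(i)}$, all other dofs zero) for \ref{hp:wp-constants-boundary-assumption}, the trivial inclusion for \ref{hp:tildewp-condition}, and the same potential-recovery argument via \cite[Theorem 3.1]{GiraultRaviart86} plus the vertex $C^2$/$\bdd{C}^1$ correspondence for \ref{hp:stokes-complex-bcs-discon-fem}. Your extra bookkeeping on edge traces and normal derivatives simply spells out what the paper leaves implicit.
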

\begin{proof}
	$\grad \bdd{A}_{\Gamma}^p \subset \bdd{H}_{\Gamma}^{p-1}$ and $A_{\Gamma}^p \subset \tilde{W}_{\Gamma}^p$ by definition, which verifies \ref{hp:tildewp-condition}.  Now let $\{ \mu_i \}_{i=1}^{N} \subset \mathbb{R}$, $N \geq 1$, be given. We define a function belonging to the Argyris space by setting all function value degrees of freedom \cite{Argyris68} corresponding to points located on $\Gamma_{cs}^{(i)}$ to $\mu_i$, $1 \leq i \leq N$, and setting the remaining degrees of freedom in \cite{Argyris68} to zero. The resulting function $w \in A^5$ satisfies $w|_{\Gamma_{cs}^{(i)}} = \mu_i$, $1 \leq i \leq N$, $\partial_n w|_{\Gamma} = 0$, and $\grad w \in \bdd{H}_{\Gamma}^{4}$. Thus, \ref{hp:wp-constants-boundary-assumption} holds.
	
	Now let $\bdd{\theta} \in  \bdd{G}^{p-1}_{\Gamma}$ satisfy $\rot \bdd{\theta} \equiv 0$. Arguing as in the proof of \cref{lem:vp-specify-constants}, we have that $\bdd{\theta} = \grad w$ for some $w \in H^1(\Omega)$. Since $\bdd{H}^{p-1}_{\Gamma} \subset \bdd{H}^1(\Omega)$, there holds
	\begin{multline*}
		w \in \{ u \in H^2(\Omega) : u|_{K} \in \mathcal{P}_p(K) \ \forall K \in \mathcal{T}, \ u \text{ is $C^2$ at element vertices and} \\
		u|_{\Gamma_{cs}^{(i)}} \in \mathbb{R}, \ 1 \leq i \leq N \}.	
	\end{multline*}
	Consequently, the function $v = w - w|_{\Gamma_{cs}^{(1)}}$ satisfies $v \in \mathbb{V}_{\Gamma}$ and $\grad v = \bdd{\theta}$, and so \ref{hp:stokes-complex-bcs-discon-fem} follows.
\end{proof}
\Cref{thm:morgan-scott-simplified-ext} shows that \cref{alg:abstract-method} can be used to compute the $H^2$-conforming Argyris/TUBA approximation defined by \cref{eq:biharmonic-primal-fem}.

\subsection{Hsieh-Clough-Tocher (HCT) element}
\label{sec:hct}

The archetypal low-order $H^2$-conforming element $(p=3)$ is the HCT macroelement \cite{Clough65} defined as follows:
\begin{align}
	\label{eq:wp-hct}
	HCT &:= \{ v \in C^1(\Omega) : v|_{K} \in \mathcal{P}_{3}(K) \ \forall K \in \mathcal{T}^{*} \},
\end{align}
where $\mathcal{T}^{*}$ denotes the mesh obtained from performing a barycentric refinement on every triangle in $\mathcal{T}$. The corresponding gradient space is chosen to be $\mathbbb{G}_{\Gamma} = \{ \bdd{\theta} \in \bdd{\Theta}_{\Gamma}(\Omega) : \bdd{\theta}|_{K} \in [\mathcal{P}_{2}(K)]^2 \ \forall K \in \mathcal{T}^{*} \}$. while $\tilde{\mathbb{W}}_{\Gamma}$ is chosen to be $\tilde{\mathbb{W}}_{\Gamma} = \{ v \in C(\Omega) : v|_{K} \in \mathcal{P}_3(K) \ \forall K \in \mathcal{T}^{*} \text{ and } v|_{\Gamma_{cs}} = 0 \}$. One may verify that taking $\mathbb{W} = HCT$ with the above choices satisfies \ref{hp:wp-constants-boundary-assumption}, \ref{hp:tildewp-condition}, and \ref{hp:stokes-complex-bcs-discon-fem} using the similar arguments as in the previous two sections -- we omit the details. Once again, \cref{thm:morgan-scott-simplified-ext} means that \cref{alg:abstract-method} can be used to compute the $H^2$-conforming HCT approximation of \cref{eq:biharmonic-primal-fem}.

\section{Implementation and numerical examples}
\label{sec:implementation-numerics}

Let $\mathbb{W}$, $\mathbbb{G}_{\Gamma}$, and $\tilde{\mathbb{W}}_{\Gamma}$ be given spaces satisfying \ref{hp:wp-constants-boundary-assumption}, \ref{hp:tildewp-condition}, and \ref{hp:stokes-complex-bcs-discon-fem}. One remaining obstacle to computing the solution to \cref{eq:zp-morgan-scott-2} is the appearance of the space $\rot \mathbbb{G}_{\Gamma}$. In principle, one can construct a basis for this finite dimensional space and formulate \cref{eq:zp-morgan-scott-2} as a linear algebraic system. However, it is not straightforward to find an explicit basis for this type of space \cite{AinCP22SCIP,AinCP21LE,ScottVog85}. 
 
An alternative is to use the iterated penalty method \cite{Brenner08,FortinGlow83,Glowinski84} which enables one to compute the solution to \cref{eq:stokes-system-gen-morgan-scott} without the need for a basis for the space $\rot \mathbbb{G}_{\Gamma}$. Specifically we define the following bilinear forms on $\mathbbb{G}_{\Gamma} \times \mathbbb{G}_{\Gamma}$ for a given $\lambda \geq 0$:
\begin{subequations}
	\begin{align}
		b(\bdd{\theta}, \bdd{\psi}) &:= (\rot \bdd{\theta}, \rot \bdd{\psi}) + \sum_{i=1}^{N-1} (\unitvec{t} \cdot \bdd{\theta}, 1)_{\Gamma_f^{(i)}} (\unitvec{t} \cdot \bdd{\psi}, 1)_{\Gamma_f^{(i)}}, \\
		a_{\lambda}(\bdd{\theta}, \bdd{\psi}) &:= a(\bdd{\theta}, \bdd{\psi})  + \lambda b(\bdd{\theta}, \bdd{\psi}).
	\end{align}
\end{subequations}
Given $\lambda \geq 0$ and $\bdd{\phi}^0 \in \mathbbb{G}_{\Gamma}$, the iterated penalty method for \cref{eq:stokes-system-gen-morgan-scott} is as follows: For $n = 0,1,\ldots$, define $ \bdd{\theta}^n, \bdd{\phi}^{n+1} \in \mathbbb{G}_{\Gamma}$ by
\begin{subequations}
	\label{eq:iter-penalty-morgan-scott}
	\begin{alignat}{2}
		a_{\lambda}(\bdd{\theta}^{n}, \bdd{\psi})   &= (\grad \tilde{z}_X, \bdd{\psi}) - b(\bdd{\phi}^n, \bdd{\psi}) \qquad & & \forall \bdd{\psi} \in \mathbbb{G}_{\Gamma}, \\
		\bdd{\phi}^{n+1} &= \bdd{\phi}^{n} + \lambda \bdd{\theta}^n. \qquad & & 
	\end{alignat}
\end{subequations}
Under appropriate conditions, given below, the iterates converge to the discrete solution to \cref{eq:stokes-system-gen-morgan-scott} as $n\to \infty$: 
\begin{align*}
	\bdd{\theta}^n \to \bdd{\theta}_{X}, \quad \rot \bdd{\phi}^n \to r_{X}, \quad \text{and} \quad (\unitvec{t}\cdot \bdd{\phi}^n, 1)_{\Gamma_f^{(i)}} \to \kappa_i, \quad 1 \leq i \leq N-1.
\end{align*}
To quantify the rate of convergence, we let $B$ denote a positive constant satisfying
\begin{align*}
	|b(\bdd{\theta}, \bdd{\psi})| \leq B \| \bdd{\theta}\|_1 \|\bdd{\psi}\|_1 \qquad \bdd{\theta}, \bdd{\psi} \in \bdd{\Theta}_{\Gamma}.
\end{align*}
 The existence of such constant follows from the trace theorem. The effect of the choice of finite elements spaces on the rate of convergence is measured by a constant $\beta_X$ defined as follows:
\begin{align}
	\label{eq:stokes-morgan-scott-gen-inf-sup}
	\beta_X := \inf_{ \substack{ (r, \vec{\kappa}) \in \rot \mathbbb{G}_{\Gamma} \times \mathbb{R}^{N-1} \\ (r, \vec{\kappa}) \neq (0, \vec{0}) } } \sup_{ \substack{ \bdd{\theta} \in \mathbbb{G}_{\Gamma} \\ \bdd{\theta} \neq \bdd{0} } } \frac{ (\rot \bdd{\theta}, r) + \sum_{i=1}^{N-1} (\unitvec{t} \cdot \bdd{\theta}, \kappa_i)_{\Gamma_f^{(i)}} }{ (\|r\| + |\vec{\kappa}|) \|\bdd{\theta}\|_{1} },
\end{align} 
where $|\vec{\kappa}| := \sum_{i=1}^{N-1} |\kappa_i|$. For each $(r, \vec{\kappa}) \in \rot \mathbbb{G}_{\Gamma} \times \mathbb{R}^{N-1}$, applying \cref{lem:invert-rot-with-free-averages} with $\omega_i = \kappa_i$, $1 \leq i \leq N-1$, and $\omega_N$ chosen so that \cref{eq:kappaij-constraint} holds shows that there exists a $\bdd{\theta} \in \mathbbb{G}_{\Gamma}$ for which the numerator is strictly positive. As a consequence. the parameter $\beta_X$ is strictly positive, but may depend on $N$ and the dimension of $\mathbbb{G}_{\Gamma}$. The rate of convergence of the iterated penalty scheme \cref{eq:iter-penalty-morgan-scott} depends on the above constants as follows:
\begin{lemma}
	\label{lem:iter-penalty-morgan-scott}
	Let $(\bdd{\theta}_{X}, r_{X}, \vec{\kappa}) \in \mathbbb{G}_{\Gamma} \times \rot \mathbbb{G}_{\Gamma} \times \mathbb{R}^{N-1}$ satisfy \cref{eq:stokes-system-gen-morgan-scott}. Then, for all
	\begin{align*}
		\lambda > \lambda_0 := \frac{M}{\beta_X^2} \left( 1 + \frac{M}{\alpha} \right)^2,
	\end{align*}
	 the iterates $\bdd{\theta}^n, \bdd{\phi}^{n+1} \in \mathbbb{G}_{\Gamma}$ given by \cref{eq:iter-penalty-morgan-scott} are well-defined and satisfy:
	\begin{subequations}
		\label{eq:iter-penalty-morgan-scott-convergence-stokes}
		\begin{align}
				\|\bdd{\theta}^n - \bdd{\theta}_{X} \|_1 &\leq \frac{1}{\beta_X} \left( 1 + \frac{M}{\alpha} \right) \epsilon_n \\
				\| \rot \bdd{\phi}^n - r_{X}\| + \sum_{i=1}^{N-1} |(\unitvec{t} \cdot \bdd{\phi}^n, 1)_{\Gamma_f^{(i)}} - \kappa_i| &\leq \left( \frac{M}{\beta_X} \left( 1 + \frac{M}{\alpha} \right) + \lambda B \right) \epsilon_n,
		\end{align}
	\end{subequations}
	where
	\begin{align}
		\label{eq:iter-penalty-morgan-scott-rot}
		\epsilon_n := \|\rot \bdd{\theta}^n\| + \sum_{i=1}^{N-1} |(\unitvec{t} \cdot \bdd{\theta}^n, 1)_{\Gamma_f^{(i)}}| \leq B \left( \frac{\lambda_0}{\lambda} \right)^n \| \bdd{\theta}^0 - \bdd{\theta}_{X} \|_1.
	\end{align}	
\end{lemma}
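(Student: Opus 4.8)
The plan is to run the classical Fortin--Glowinski iterated-penalty analysis, adapted to the facts that here $a(\cdot,\cdot)$ is only positive semidefinite on $\mathbbb{G}_\Gamma$ and coercive merely on the subspace $\grad\mathbb{W}_\Gamma$, and that the Lagrange multiplier in \cref{eq:stokes-system-gen-morgan-scott} lives in $\rot\mathbbb{G}_\Gamma\times\mathbb{R}^{N-1}$. First I would fold the two constraints into the single symmetric positive semidefinite form $b$: writing $c(\bdd{\theta}):=(\rot\bdd{\theta},((\unitvec{t}\cdot\bdd{\theta},1)_{\Gamma_f^{(i)}})_{i=1}^{N-1})$, one has $b(\bdd{\theta},\bdd{\psi})=\langle c(\bdd{\theta}),c(\bdd{\psi})\rangle$ for the obvious $L^2\times$Euclidean inner product, $\Kernel c=\grad\mathbb{W}_\Gamma$ (by \ref{hp:stokes-complex-bcs-discon-fem} and \cref{eq:wp-integral-zero-chara}), and \cref{eq:stokes-system-gen-morgan-scott} is equivalent to finding $\bdd{\theta}_X\in\mathbbb{G}_\Gamma$ together with a representative $\bdd{\phi}_X\in\mathbbb{G}_\Gamma$ with $c(\bdd{\phi}_X)=(r_X,\vec{\kappa})$ and $a(\bdd{\theta}_X,\bdd{\psi})+b(\bdd{\phi}_X,\bdd{\psi})=(\grad\tilde{z}_X,\bdd{\psi})$ for all $\bdd{\psi}$; existence of such a $\bdd{\phi}_X$ is precisely \cref{lem:invert-rot-with-free-averages}, and the fact that it is fixed only modulo $\Kernel c$ is harmless because only $c(\bdd{\phi}^n-\bdd{\phi}_X)$ ever enters the estimates. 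I would also record that $a_\lambda=a+\lambda b$ is positive definite on the finite-dimensional space $\mathbbb{G}_\Gamma$ for every $\lambda>0$ (if $a_\lambda(\bdd{\theta},\bdd{\theta})=0$ then $a(\bdd{\theta},\bdd{\theta})=0$ by \cref{eq:a-positive} and $c(\bdd{\theta})=0$, whence $\bdd{\theta}=\grad w$ with $w\in\mathbb{W}_\Gamma$ and then $w=0$ by \cref{eq:a-elliptic}), so the iterates $\bdd{\theta}^n$ in \cref{eq:iter-penalty-morgan-scott} are well-defined.

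Next I would derive the error relation: rewriting the $n$th step of \cref{eq:iter-penalty-morgan-scott} using $\bdd{\phi}^{n+1}=\bdd{\phi}^n+\lambda\bdd{\theta}^n$ gives $a(\bdd{\theta}^n,\bdd{\psi})+b(\bdd{\phi}^{n+1},\bdd{\psi})=(\grad\tilde{z}_X,\bdd{\psi})$, and subtracting the exact equation yields $a(\bdd{\theta}^n-\bdd{\theta}_X,\bdd{\psi})+b(\bdd{\phi}^{n+1}-\bdd{\phi}_X,\bdd{\psi})=0$ for all $\bdd{\psi}\in\mathbbb{G}_\Gamma$, while the penalty update telescopes as $c(\bdd{\phi}^{n+1}-\bdd{\phi}_X)=c(\bdd{\phi}^n-\bdd{\phi}_X)+\lambda c(\bdd{\theta}^n)$ with $c(\bdd{\theta}_X)=0$. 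From the error relation I extract: (a) testing with $\bdd{\psi}\in\grad\mathbb{W}_\Gamma$ kills the $b$-term, so the error is $a$-orthogonal to $\grad\mathbb{W}_\Gamma$; splitting $\bdd{\theta}^n-\bdd{\theta}_X=\grad v_n+\bdd{\zeta}_n$ with $v_n\in\mathbb{W}_\Gamma$ and $\bdd{\zeta}_n$ a bounded right-inverse image of the constraint defect ($\|\bdd{\zeta}_n\|_1\lesssim\beta_X^{-1}\epsilon_n$ from \cref{eq:stokes-morgan-scott-gen-inf-sup} and \cref{lem:invert-rot-with-free-averages}), $a$-orthogonality with \cref{eq:a-elliptic} and \cref{eq:a-bounded} gives $\|v_n\|_2\le(M/\alpha)\|\bdd{\zeta}_n\|_1$, hence the first bound in \cref{eq:iter-penalty-morgan-scott-convergence-stokes}, $\|\bdd{\theta}^n-\bdd{\theta}_X\|_1\le\beta_X^{-1}(1+M/\alpha)\epsilon_n$; (b) testing with $\bdd{\psi}=\bdd{\theta}^n-\bdd{\theta}_X$ and using the polarization identity for the symmetric positive semidefinite form $b$ (noting $b(\cdot,\bdd{\theta}_X)=0$) produces the energy identity $a(\bdd{\theta}^n-\bdd{\theta}_X,\bdd{\theta}^n-\bdd{\theta}_X)+\tfrac{\lambda}{2}b(\bdd{\theta}^n,\bdd{\theta}^n)=\tfrac{1}{2\lambda}[\,b(\bdd{\phi}^n-\bdd{\phi}_X,\bdd{\phi}^n-\bdd{\phi}_X)-b(\bdd{\phi}^{n+1}-\bdd{\phi}_X,\bdd{\phi}^{n+1}-\bdd{\phi}_X)\,]$, so, dropping the nonnegative $a$-term, $\lambda\,b(\bdd{\theta}^n,\bdd{\theta}^n)^{1/2}\le b(\bdd{\phi}^n-\bdd{\phi}_X,\bdd{\phi}^n-\bdd{\phi}_X)^{1/2}$; (c) the error relation at step $n-1$ reads $b(\bdd{\phi}^n-\bdd{\phi}_X,\bdd{\psi})=-a(\bdd{\theta}^{n-1}-\bdd{\theta}_X,\bdd{\psi})$, so taking the supremum over $\bdd{\psi}$ and invoking \cref{eq:stokes-morgan-scott-gen-inf-sup} and then (a) at step $n-1$ bounds $\|\rot\bdd{\phi}^n-r_X\|+\sum_{i=1}^{N-1}|(\unitvec{t}\cdot\bdd{\phi}^n,1)_{\Gamma_f^{(i)}}-\kappa_i|$ by $(M/\beta_X)\|\bdd{\theta}^{n-1}-\bdd{\theta}_X\|_1\le(M/\beta_X^2)(1+M/\alpha)\epsilon_{n-1}\le\lambda_0\epsilon_{n-1}$. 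Chaining (b) and (c) gives the geometric recursion $\lambda\epsilon_n\lesssim\lambda_0\epsilon_{n-1}$, hence \cref{eq:iter-penalty-morgan-scott-rot}, the starting bound $\epsilon_0\le B\|\bdd{\theta}^0-\bdd{\theta}_X\|_1$ following from the boundedness of $b$; and the multiplier bound in \cref{eq:iter-penalty-morgan-scott-convergence-stokes} then follows from (c) applied at step $n+1$ combined with $c(\bdd{\phi}^n-\bdd{\phi}_X)=c(\bdd{\phi}^{n+1}-\bdd{\phi}_X)-\lambda c(\bdd{\theta}^n)$ and the boundedness of $b$ (which is where the $\lambda B$ term appears).

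I expect this architecture to be routine; the main obstacle will be the norm bookkeeping needed to land on precisely the stated constants. The quantity $\epsilon_n$ is an $\ell^1$-type combination, $b(\cdot,\cdot)^{1/2}$ is $\ell^2$-type, and the relevant $H^1$-type norms are only equivalent to these, while the normalization built into \cref{eq:stokes-morgan-scott-gen-inf-sup} uses yet another (the $\ell^1$) convention; moreover the inf-sup is used twice — once to build the bounded right inverse $\bdd{\zeta}_n$ of the constraint operator and once to control $\bdd{\phi}^n-\bdd{\phi}_X$ from the error equation — and carrying both uses through consistently, while keeping the $a$-orthogonality/ellipticity step in (a) sharp, is the delicate part of the computation.
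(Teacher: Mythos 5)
Your proposal is correct and, for the well-definedness of the iterates, coincides exactly with the paper's argument: both show that $a_\lambda(\bdd{\theta},\bdd{\theta})=0$ forces $\rot\bdd{\theta}\equiv 0$ and vanishing tangential averages via the positivity \cref{eq:a-positive}, then use the exactness \ref{hp:stokes-complex-bcs-discon-fem} together with \cref{eq:wp-integral-zero-chara} to write $\bdd{\theta}=\grad w$ with $w\in\mathbb{W}_{\Gamma}$, and conclude $w\equiv 0$ from the coercivity \cref{eq:a-elliptic}. Where you genuinely diverge is in the convergence estimates: the paper simply invokes the standard iterated penalty theory (equation (13.1.18) and Theorem 13.2.2 of \cite{Brenner08}), whereas you re-derive it from scratch --- introducing a representative $\bdd{\phi}_X$ of the multiplier via \cref{lem:invert-rot-with-free-averages}, forming the error equation $a(\bdd{\theta}^n-\bdd{\theta}_X,\cdot)+b(\bdd{\phi}^{n+1}-\bdd{\phi}_X,\cdot)=0$, splitting the error into a $\grad\mathbb{W}_{\Gamma}$ component (controlled by $a$-orthogonality and \cref{eq:a-elliptic}) plus a bounded right-inverse of the constraint residual (controlled by \cref{eq:stokes-morgan-scott-gen-inf-sup}), and closing the recursion with the telescoping energy identity. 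This buys a self-contained proof that makes transparent where each hypothesis enters, at the cost of the norm bookkeeping you yourself flag: $\epsilon_n$ is an $\ell^1$-type combination while $b(\cdot,\cdot)^{1/2}$ is the $\ell^2$-type one, so your chained recursion and multiplier bound come out with equivalent-norm factors (e.g.\ a $\sqrt{N}$ in the geometric ratio, $\sqrt{B}$ versus $B$, and $M/\beta_X^2$ versus $M/\beta_X$ in the multiplier estimate) rather than literally the stated constants --- exactly the discrepancies that the paper's citation to \cite{Brenner08} absorbs. None of this is a genuine gap; the architecture is sound and delivers the lemma up to that constant-level bookkeeping.
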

\begin{proof}[Proof of \cref{lem:iter-penalty-morgan-scott}]
	Let $\lambda > \lambda_0 > 0$. To show that the iterates defined by \cref{eq:iter-penalty-morgan-scott} are well-defined, it suffices to show that if $\bdd{\theta} \in \mathbbb{G}_{\Gamma}$ satisfies $a_{\lambda}(\bdd{\theta}, \bdd{\psi}) = 0$ for all $\bdd{\psi} \in \mathbbb{G}_{\Gamma}$, then $\bdd{\theta} \equiv \bdd{0}$. Choosing $\bdd{\psi} = \bdd{\theta}$ and using the positivity of $a(\cdot,\cdot)$ \cref{eq:a-positive} gives
	\begin{align*}
		\| \rot \bdd{\theta}\|^2 + \sum_{i=1}^{N-1} \left|(\unitvec{t} \cdot \bdd{\theta}, 1)_{\Gamma_f^{(i)}}\right|^2 \leq \lambda^{-1} a_{\lambda}( \bdd{\theta}, \bdd{\theta} ) = 0.
	\end{align*}
	Arguing as in the proof of \cref{thm:morgan-scott-simplified-ext}, it follows that $\bdd{\theta} = \grad w$ for some $w \in \mathbb{W}_{\Gamma}$ and so $0 = a_{\lambda}(\grad w, \grad w) = a(\grad w, \grad w) \geq \alpha \|w\|_2^2$ by the coercivity of $a(\cdot, \cdot)$ on $\grad H^2_{\Gamma}$ \cref{eq:a-elliptic}. Consequently $w \equiv 0$ and hence $\bdd{\theta} \equiv \bdd{0}$.	
	
	\Cref{eq:iter-penalty-morgan-scott-convergence-stokes,eq:iter-penalty-morgan-scott-rot} now follow from standard iterated penalty convergence results; see e.g. equation (13.1.18) and Theorem 13.2.2 in \cite{Brenner08}.
\end{proof}

\noindent \Cref{lem:iter-penalty-morgan-scott} shows that the iterated penalty method converges at a geometric rate provided that the parameter $\lambda$ is chosen to be sufficiently large while \cref{eq:iter-penalty-morgan-scott-rot} provides a numerical stopping criterion for the iteration. 

\begin{remark}
	\label{rem:morgan-scott-constant-iter}
	In the special case when $\mathbb{W} = W^p$, $p \geq 5$, is taken to be the Morgan-Scott space \cref{eq:wp-morgan-scott}, \cref{cor:stokes-morgan-scott-gen-inf-sup} shows that $\beta_{X}$ is independent of both the mesh size and polynomial degree $p$. As a result, the iterated penalty method converges at a rate independent of the mesh size and polynomial order.
\end{remark}

\subsection{Example 1: Simply-supported plate with point load}
\label{sec:lshape}

For the first example, we consider an L-shaped Kirchhoff plate $\Omega = (0, 1)^2 \setminus [0.5, 1]^2$ with simple supports $(\Gamma_s = \Gamma)$ subject to a point load at $(0.66,0.33)$. The material parameters in the bilinear form $a(\cdot,\cdot)$ \cref{eq:abilinear-def} are chosen to be $E = 1.4\mathrm{e}6$, $\nu = 0.3$, and $\tau = 0.01$. We take $\mathbb{W} = W^p$, $p \geq 5$, to be the Morgan-Scott space \cref{eq:wp-morgan-scott} on the mesh displayed in \cref{fig:lshape-mesh-and-solution} and apply \cref{alg:abstract-method} with the spaces chosen as in \cref{sec:morgan-scott}. We use the iterated penalty method \cref{eq:iter-penalty-morgan-scott} with $\lambda = 10^3$ to compute the solution to \cref{eq:stokes-system-gen-morgan-scott}. Iterations are terminated when $\epsilon_n < 10^{-10}$. The iterated penalty method converged in $4$ iterations for $p=5$ and $3$ iterations for $p=6,\ldots,10$, in agreement with \cref{rem:morgan-scott-constant-iter}.  \Cref{fig:lshape-mesh-and-solution} displays the displacement $\tilde{w}_X = w_X$ for the $p=10$ solution. In particular, one observes the well-known phenomenon whereby the upper-left part of the plate $(0, 0.5) \times (0.5, 1)$ is deflected in the \textit{opposite direction} to that in which the point load acts. Moreover, $\tilde{w}_X$  is $C^1$-conforming as seen in \cref{fig:lshape-gradient}.

\begin{figure}[htb]
	\centering
	\begin{subfigure}[b]{0.48\linewidth}
		\centering
		\includegraphics[width=\linewidth]{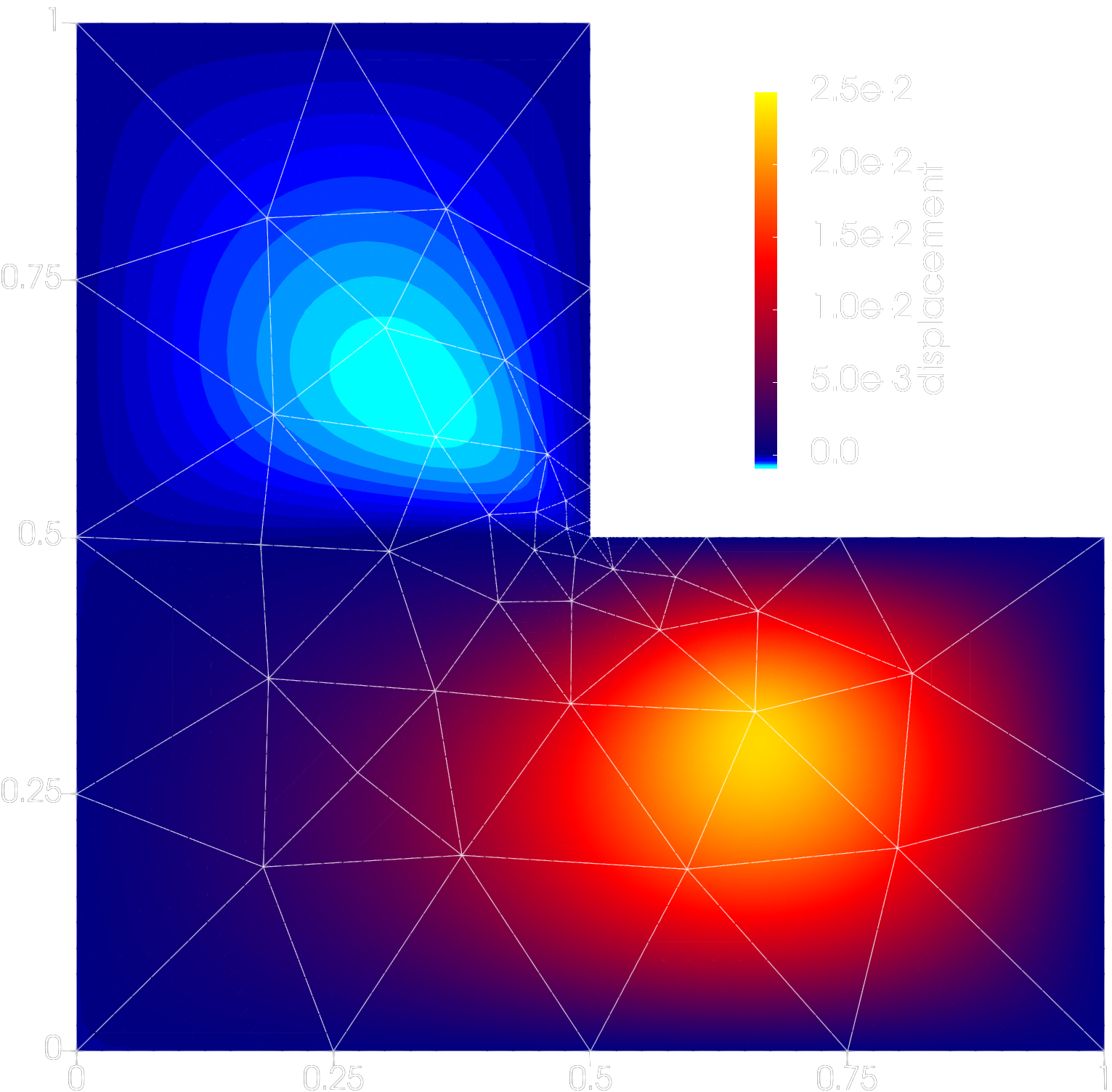}
		\caption{}
		\label{fig:lshape-mesh-and-solution}
	\end{subfigure}
	\hfill
	\begin{subfigure}[b]{0.48\linewidth}
		\centering
		\includegraphics[width=\linewidth]{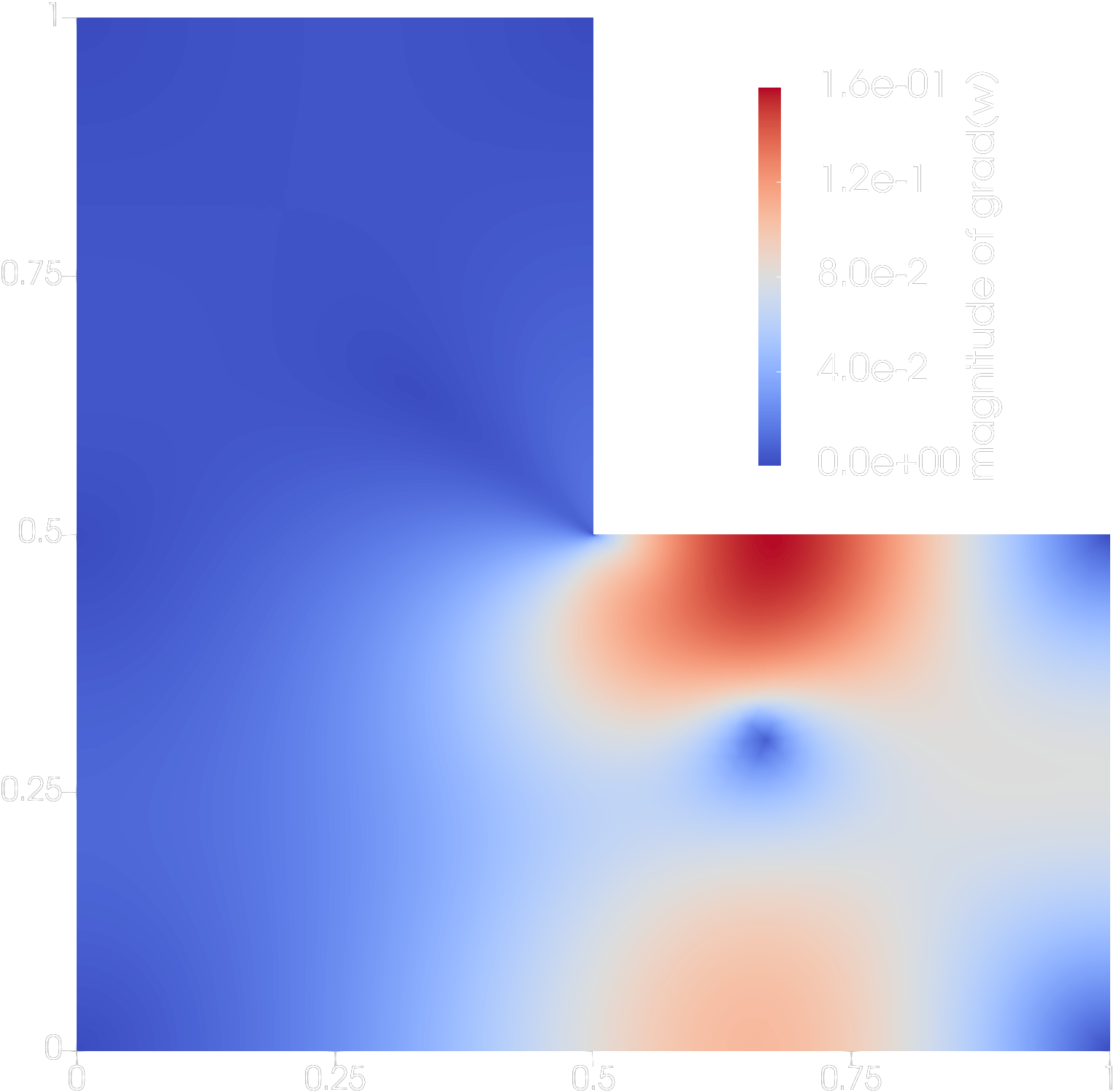}
		\caption{}
		\label{fig:lshape-gradient}
	\end{subfigure}
	\caption{(a) The $p=10$ Morgan-Scott approximation $\tilde{w}_X$ on the computational mesh (white) to the simply-supported L-shaped plate with point load  and (b) $|\grad \tilde{w}_X|$ (different color scale). Note that the upper left quadrant deflects in the opposite direction to which the load acts and that $\tilde{w}_X$ is indeed $C^1$-conforming.}
\end{figure}

One quantity of interest in elastic simulations is the von Mises stress \cite{vonmises13} 
\begin{align}
	\label{eq:von-mises}
	\sigma_v^2 := \sigma_{11}^2 + \sigma_{22}^2 - \sigma_{11} \sigma_{22} + 3 \sigma_{12}^2,
\end{align}
which is used to predict yielding. The value of $\sigma_v$ computed from the $p=10$ solution at the top of the plate is shown in \cref{fig:lshape-vonmises}. The true solution contains a singularity at the reentrant corner of the plate which leads to oscillations in the numerical approximation. However, these oscillations are confined to the elements that touch the reentrant corner and do not pollute the numerical solution away from the reentrant corner, as seen e.g. in the zoom on the point load in \cref{fig:lshape-vonmises-load}.

\begin{figure}[htb]
	\centering
	\begin{subfigure}{0.48\linewidth}
		\centering
		\includegraphics[width=\linewidth]{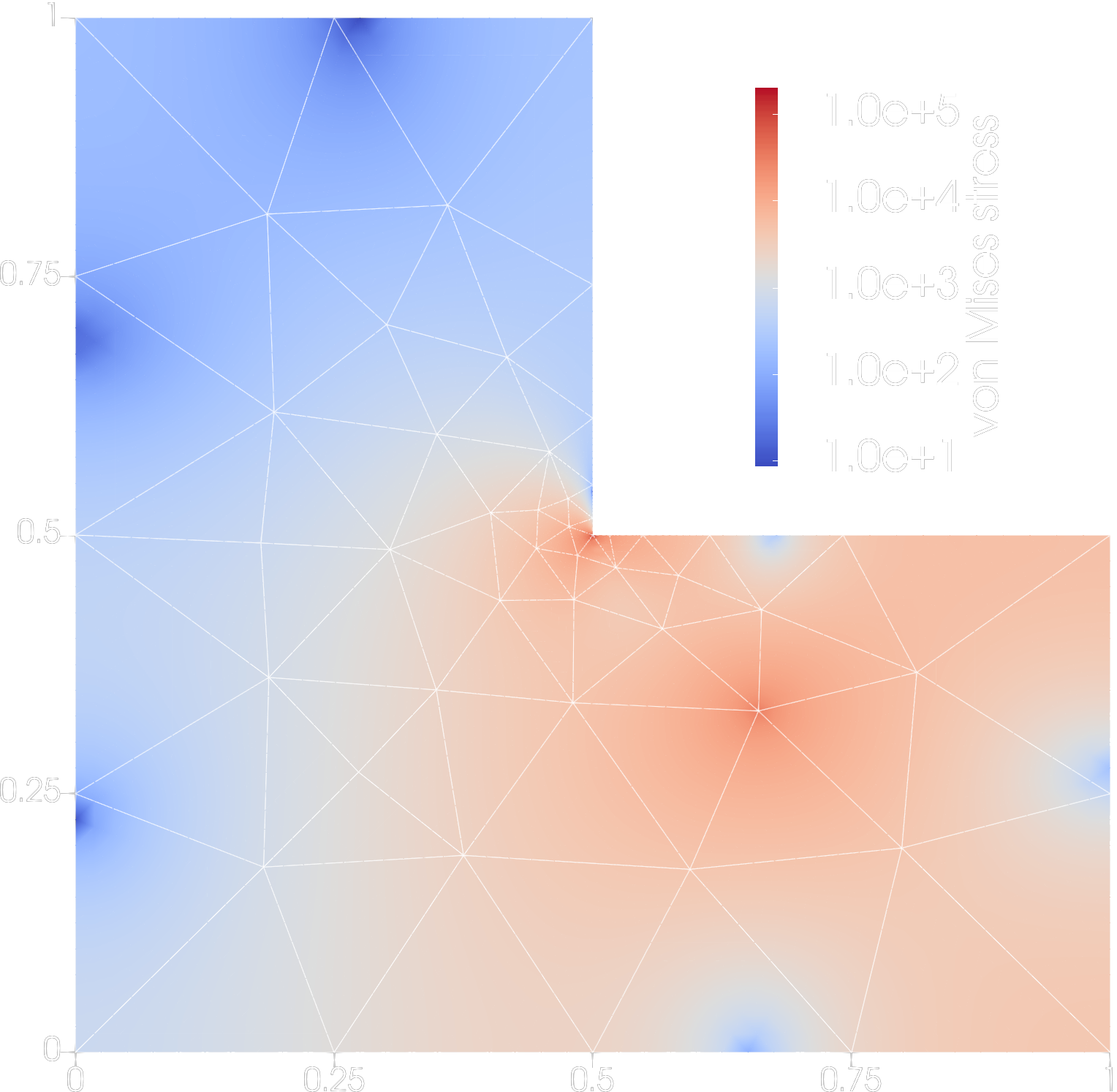}
		\caption{}
		\label{fig:lshape-vonmises}
	\end{subfigure}
	\hfill
	\begin{subfigure}{0.48\linewidth}
		\centering
		\includegraphics[width=\linewidth]{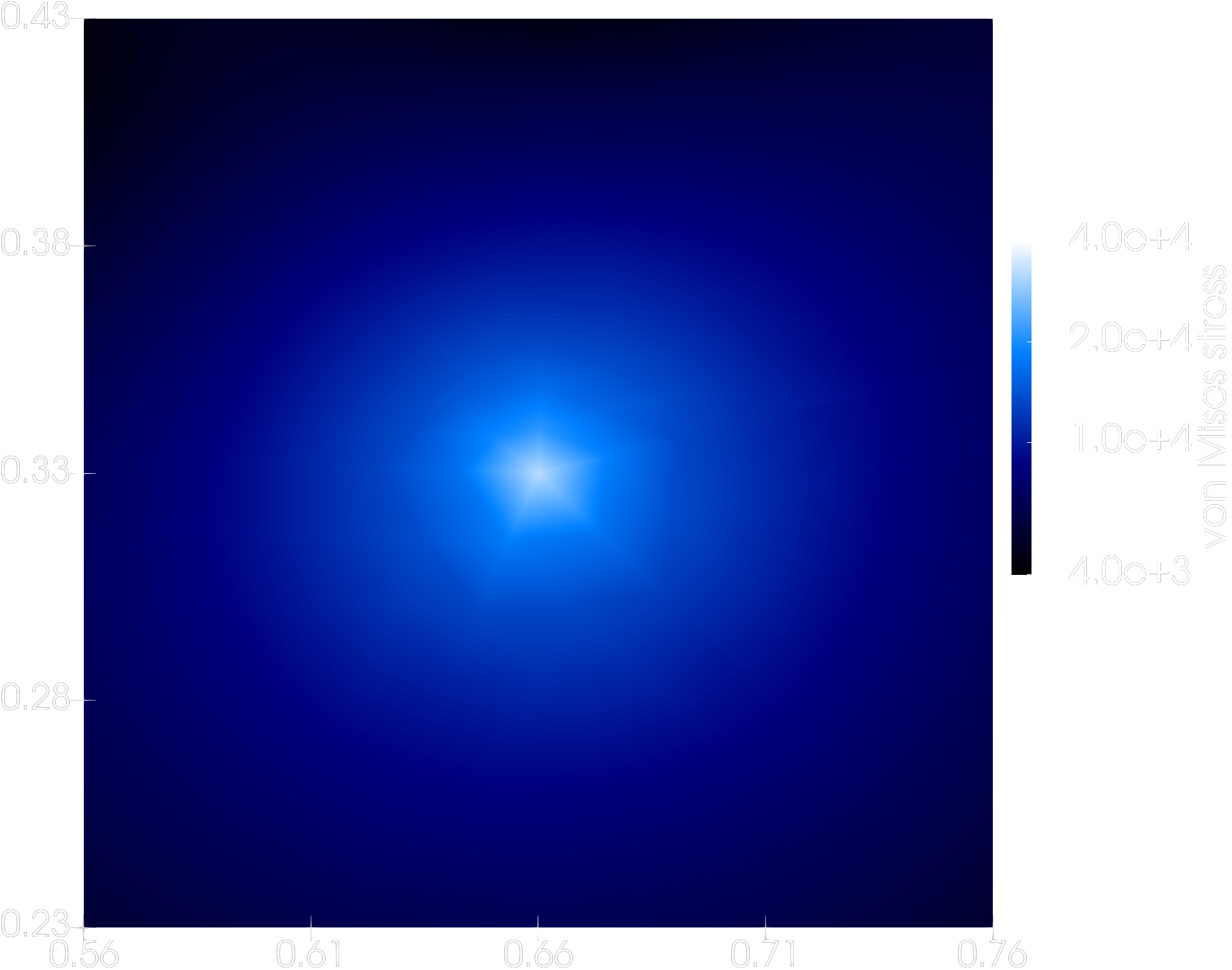}
		\caption{}
		\label{fig:lshape-vonmises-load}
	\end{subfigure}
	\caption{(a) The von Mises stress $\sigma_v$ \cref{eq:von-mises} computed from the $p=10$ Morgan-Scott approximation to the simply-supported L-shaped plate with point load and (b) zoom on the point load (different color scale).}
\end{figure}

\subsection{Example 2: G-shaped domain}
\label{sec:gshape}

We now turn to a G-shaped domain shown in \cref{fig:gshape-disp} subject to a uniform load $F \equiv 1$ with clamped vertical edges and free horizontal edges so that $\Gamma_{cs}$ consists of $N = 7$ connected components. We choose the material parameters, use the Morgan-Scott elements, and solve the Stokes-like system  \cref{eq:stokes-system-gen-morgan-scott} using the iterated penalty method \cref{eq:iter-penalty-morgan-scott} as in \cref{sec:lshape}. For $p=5,\ldots,10$, the iterated penalty method converged in 9 iterations. The $p=10$ approximation of the displacement and $|\grad \tilde{w}_X|$ are displayed in \cref{fig:gshape}. \Cref{alg:abstract-method} enables us to compute the Morgan-Scott solution and the iterated penalty method converges in a number of iterations independent of the polynomial degree in accordance with \cref{rem:morgan-scott-constant-iter}. The stresses are singular at every corner of the domain and, ordinarily, one would refine the mesh in the neighborhood of the corner to resolve the singularities. However, for illustrative purposes, no such refinements are carried out, and \cref{fig:gshape-vms} illustrates some pollution in the von Mises stress. Nevertheless, the computed approximation $\tilde{w}_{X}$ is seen to be $C^1$-conforming.

\begin{figure}[htb]
	\centering
	\begin{subfigure}[b]{0.48\linewidth}
		\centering
		\includegraphics[width=\linewidth]{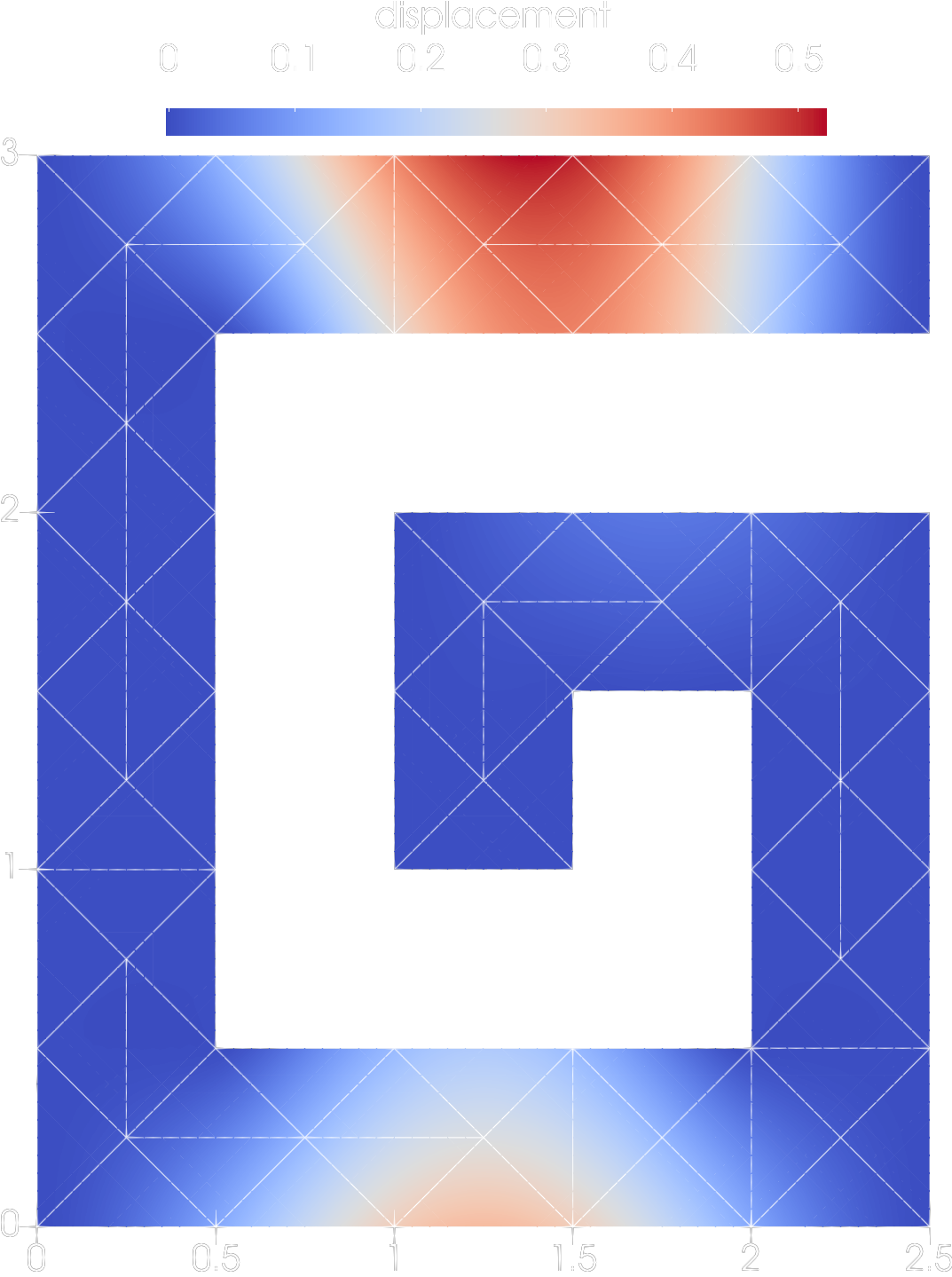}
		\caption{}
		\label{fig:gshape-disp}
	\end{subfigure}
	\hfill
	\begin{subfigure}[b]{0.48\linewidth}
		\centering
		\includegraphics[width=\linewidth]{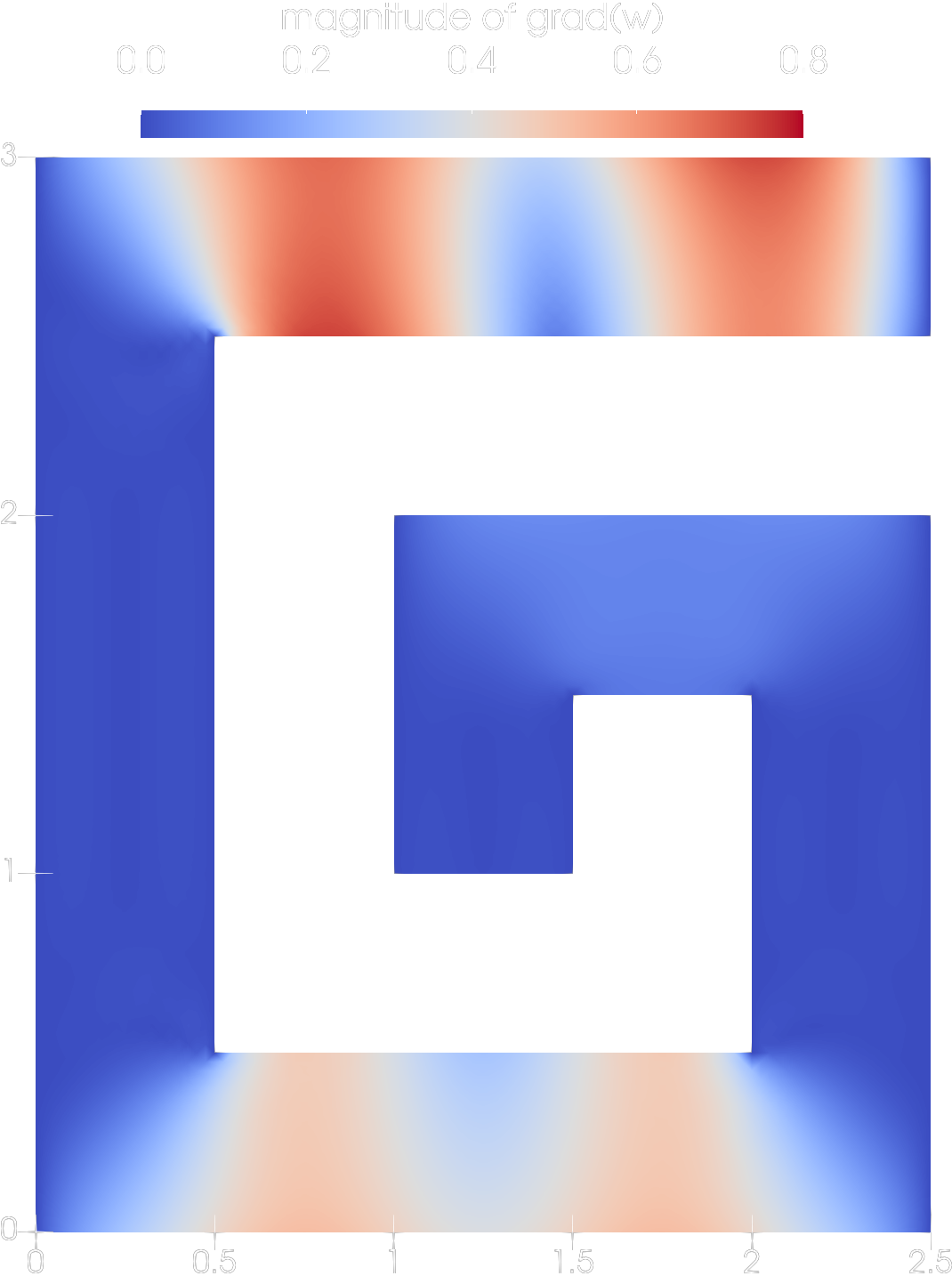}
		\caption{}
	\end{subfigure}
	\caption{(a) The $p=10$ Morgan-Scott approximation on the computational mesh (white) to the G-shaped plate with uniform load  and (b) $|\grad \tilde{w}_X|$ computed from the approximation (different color scale).}
	\label{fig:gshape}
\end{figure}

\begin{figure}[htb]
	\centering
	\begin{subfigure}[b]{0.48\linewidth}
		\centering
		\includegraphics[width=\linewidth]{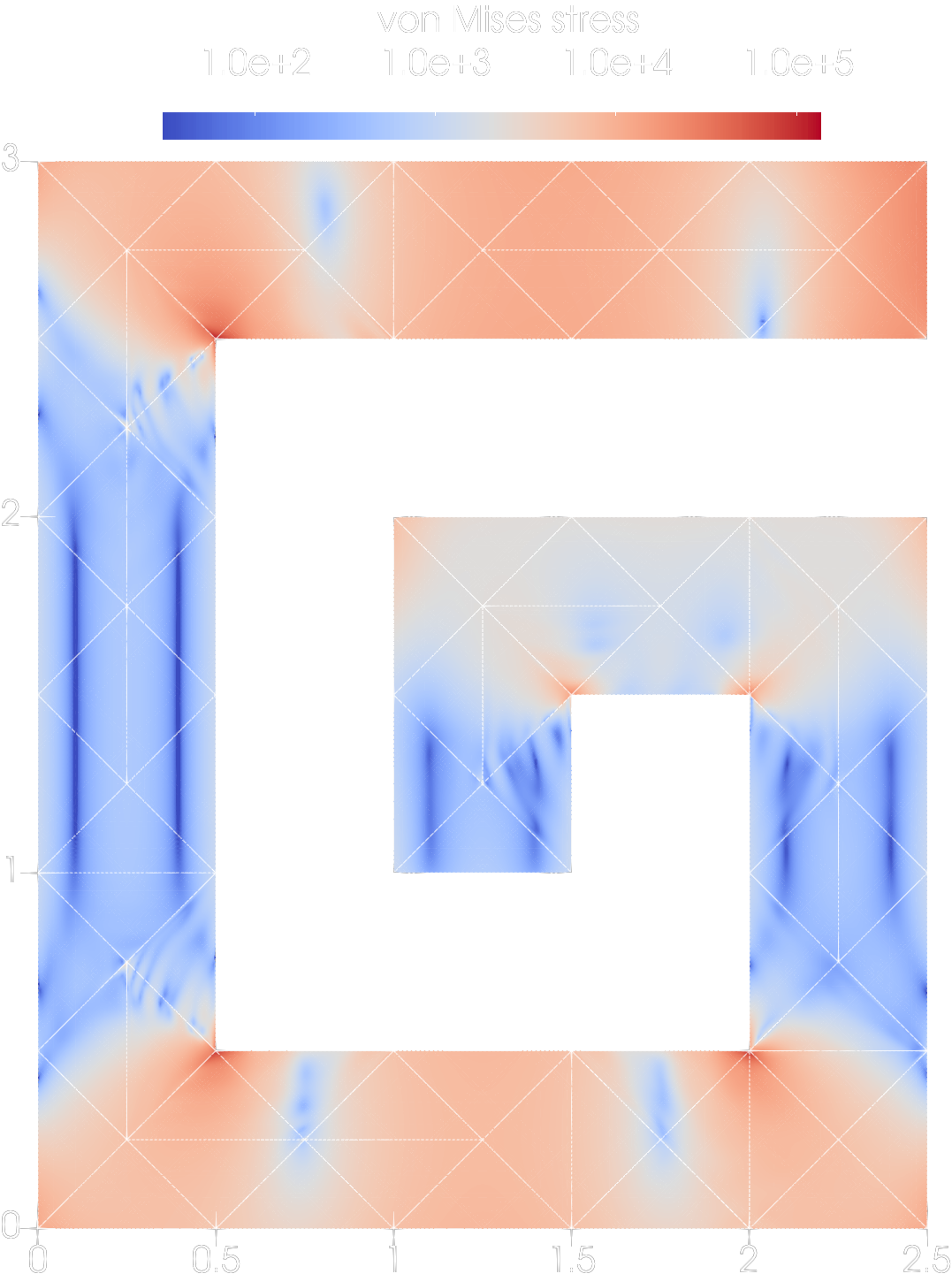}
		\caption{}
		\label{fig:gshape-vm}
	\end{subfigure}
	\hfill
	\begin{subfigure}[b]{0.48\linewidth}
		\centering
		\includegraphics[width=\linewidth]{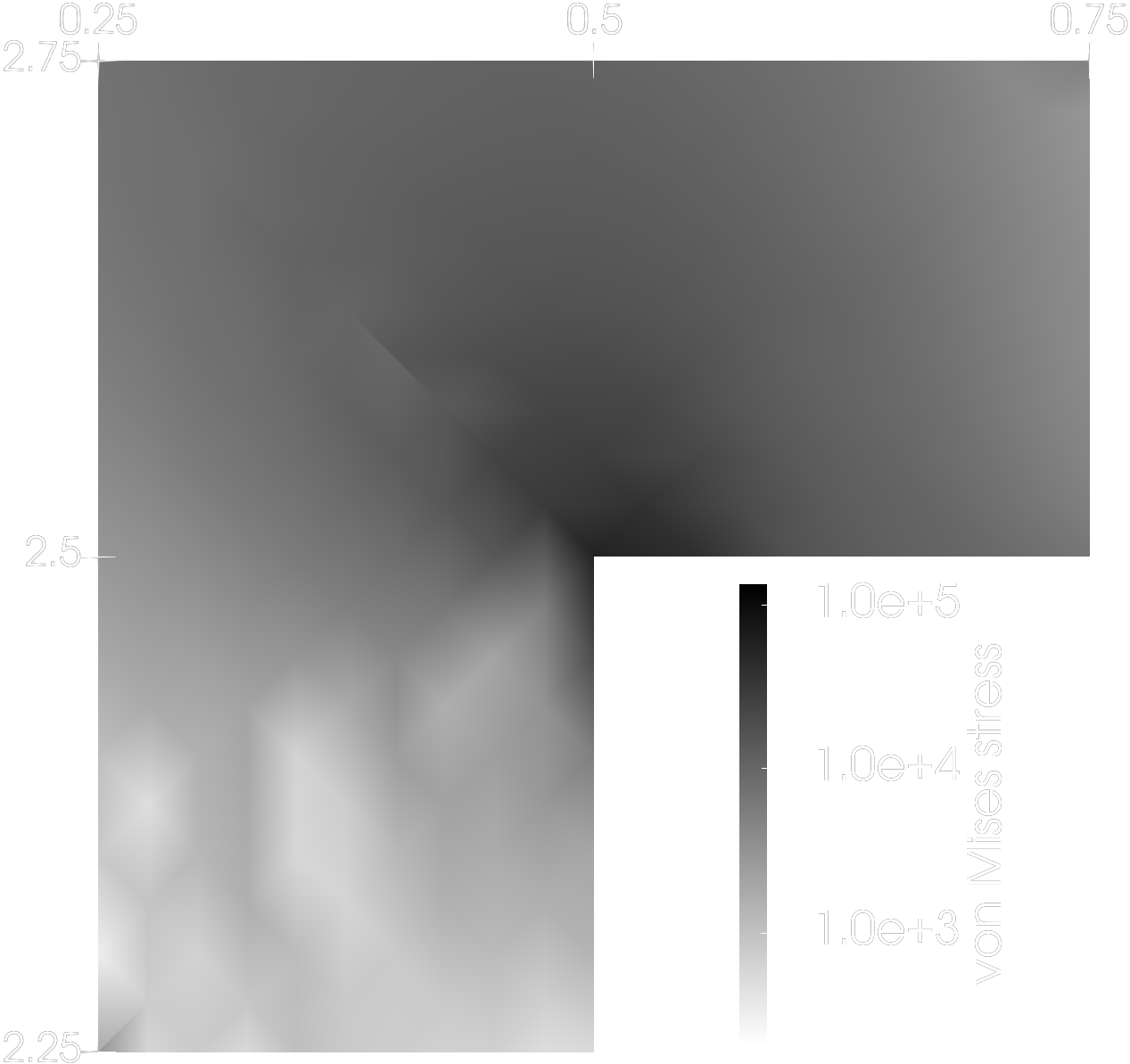}
		\caption{}
	\end{subfigure}
	\caption{(a) The von Mises stress $\sigma_v$ \cref{eq:von-mises} computed from the $p=10$ Morgan-Scott approximation to the G-shaped plate with uniform load  and (b) zoom on a reentrant corner (different color scale).}
	\label{fig:gshape-vms}
\end{figure}

\subsection{Example 3: Behavior of low-order methods}
\label{sec:low-order-example}

In the previous two examples, we chose $\mathbb{W}$ to be the Morgan-Scott space $W^p$ \cref{eq:wp-morgan-scott} of degree $p \geq 5$. As briefly mentioned earlier, the spaces $W^p$, $\mathbbb{G}_{\Gamma} = \bdd{G}_{\Gamma}^{p-1}$ \cref{eq:thetap-morgan-scott}, and  $\tilde{\mathbb{W}}_{\Gamma} = \tilde{W}_{\Gamma}^{p}$ \cref{eq:tildewp-morgan-scott} are well-defined for any $p \geq 1$, but there is no standard local, finite element basis for these elements \cite{AlfredPiperSchumaker87} and, as such it is not clear how one could compute the finite element approximation in the cases $p \geq 4$ using a standard element sub-assembly approach. In contrast, \cref{alg:abstract-method} does not require such a local basis and, as a result, means that the cases $p \leq 4$ can be treated in exactly the same way as $p \geq 5$. To highlight some approximation properties of the low-order ($p \leq 4$) Morgan-Scott elements, we solve the biharmonic projection problem
\begin{align}
	\label{eq:biharmonic-projection}
	w_p \in W_{\Gamma}^p : \qquad (\Delta w_p, \Delta v) = (\Delta^2 w, v) \qquad \forall v \in W_{\Gamma}^p,
\end{align}
where $\Gamma_c = \Gamma$ and $w := \sin^2(\pi x) \sin^2(\pi y)$. 

As shown in \cref{sec:problem-setting}, the bilinear form $a(\cdot,\cdot) := (\dive \cdot, \dive \cdot)$ satisfies \cref{eq:a-bounded-elliptic}. Moreover, \cref{rem:morgan-scott-low-order} shows that since $\Gamma_{cs}$ is connected ($N = 1$), $\mathbb{W}$, $\mathbbb{G}_{\Gamma}$, and $\tilde{\mathbb{W}}_{\Gamma}$ satisfy conditions \ref{hp:wp-constants-boundary-assumption}, \ref{hp:tildewp-condition}, and \ref{hp:stokes-complex-bcs-discon-fem}. Consequently, \cref{alg:abstract-method} delivers the lower-order approximations, and the iterated penalty method \cref{eq:iter-penalty-morgan-scott} is well-defined and converges at a geometric rate for $\lambda$ large enough, \textit{even though local bases for the spaces $W^p$, $p \leq 4$, are not available}. 

For each $2 \leq p \leq 5$, we compute the solution to \cref{eq:biharmonic-projection} using \cref{alg:abstract-method} on a sequence of meshes obtained by subdividing the unit square into two triangles and refining the mesh by splitting each triangle into four congruent subtriangles. The $H^2$-errors displayed in \cref{fig:low-order-convergence} show that for $2 \leq p \leq 4$, the errors behave like $\mathcal{O}(h^{p-2})$, which is consistent with the upper bound on the convergence rate of $p-2$ obtained in \cite{Deboor93}. Meanwhile, the $p=5$ approximation achieves the optimal error rate of $\mathcal{O}(h^4)$. Additionally, the number of iterations for the iterated penalty method to converge grows as the mesh is refined for $2 \leq p \leq 4$ but remains constant for $p=5$. This observation is consistent with the behavior of the inf-sup constant \cref{eq:stokes-morgan-scott-gen-inf-sup}, which behaves like $\mathcal{O}(h)$ for $2 \leq p \leq 4$ (see e.g. \cite[Figure 13]{AinCP21LE}) and is uniformly bounded from below for $p=5$ thanks to \cref{lem:invert-rot-rp}.

\begin{figure}[htb]
	\centering
	\begin{subfigure}[b]{0.48\linewidth}
		\centering
		\includegraphics[width=\linewidth]{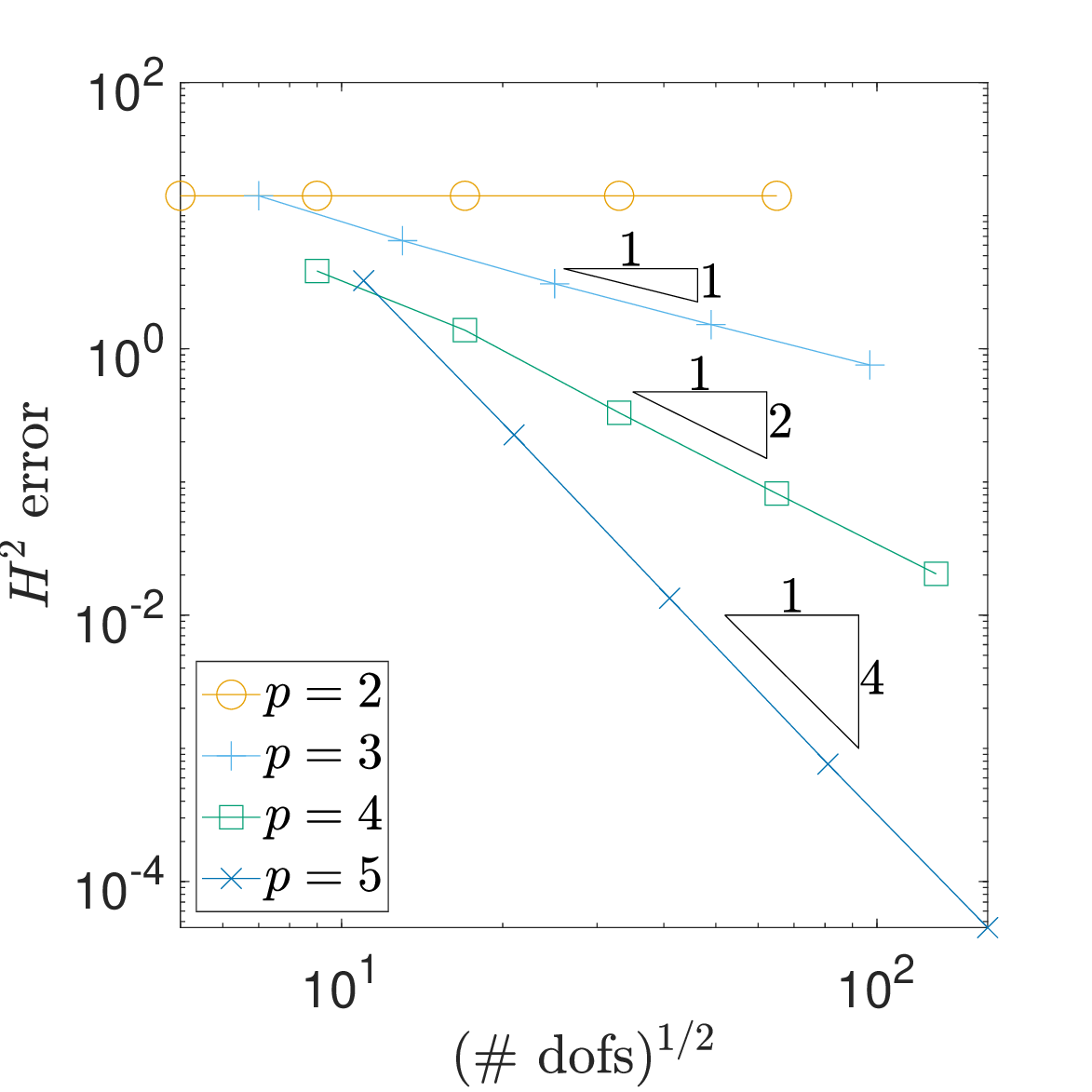}
		\caption{}
		\label{fig:low-order-convergence}
	\end{subfigure}
	\hfill
	\begin{subfigure}[b]{0.48\linewidth}
		\centering
		\includegraphics[width=\linewidth]{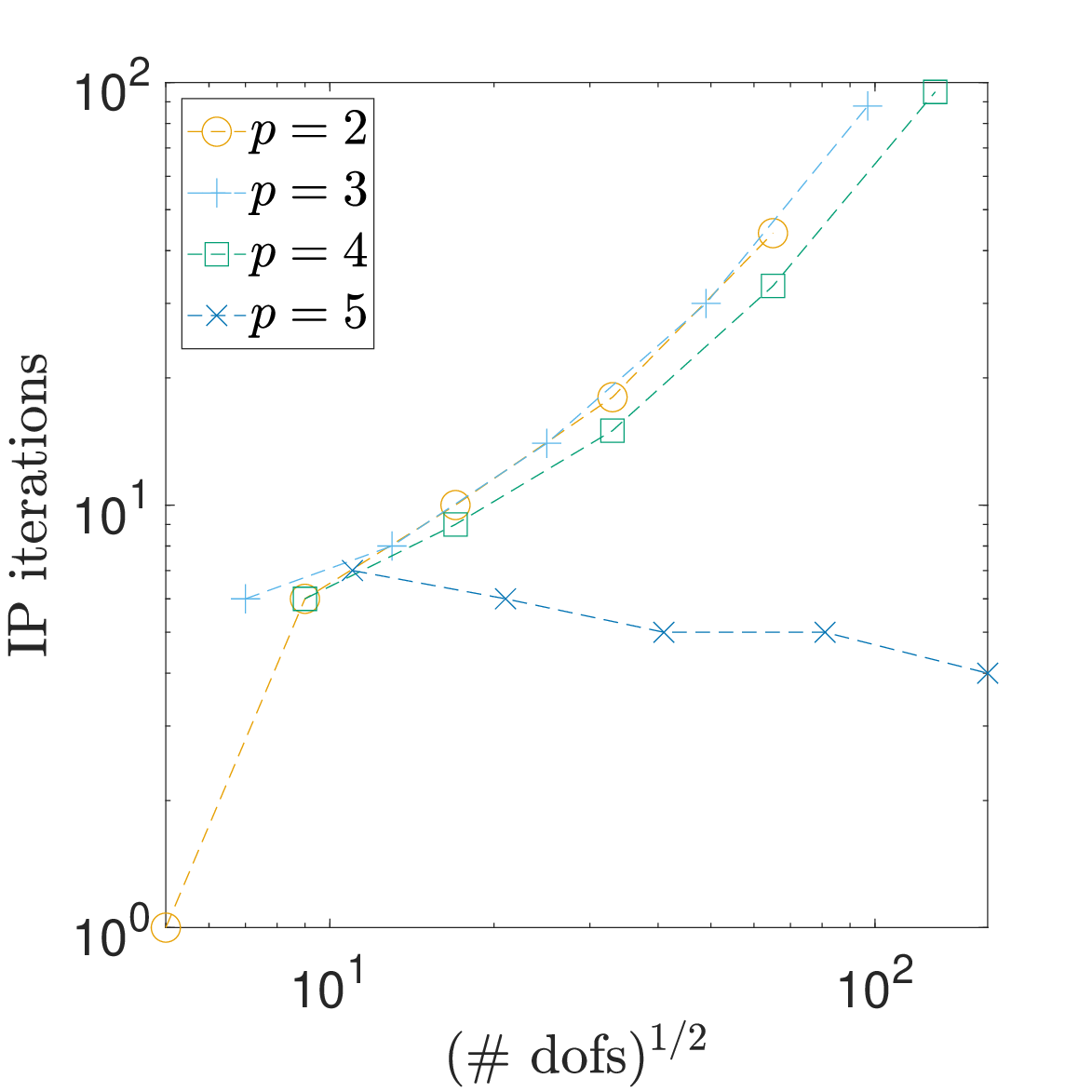}
		\caption{}
		\label{fig:low-order-ipiters}
	\end{subfigure}
	\caption{(a) $H^2(\Omega)$-errors and (b) iterated penalty (IP) iterations for the biharmonic projection problem \cref{eq:biharmonic-projection} with degree $p$ Morgan-Scott elements, $2 \leq p \leq 5$.}
\end{figure}

\subsection{Code availability}

All numerical examples have been implemented using Firedrake \cite{FiredrakeUserManual,Nixonhill23}. The code is available at \cite{zenodo/Firedrake-20231103.0}.

\section{Conclusion}
\label{sec:conclusion}

\Cref{alg:abstract-method} enables one to compute the $H^2$-conforming approximation of \cref{eq:biharmonic-primal-fem} without having to implement $C^1$-conforming finite elements provided that three basic conditions \ref{hp:wp-constants-boundary-assumption}, \ref{hp:tildewp-condition}, and \ref{hp:stokes-complex-bcs-discon-fem} are satisfied. It was shown that all common triangular $C^1$-finite elements satisfy these conditions. \Cref{alg:abstract-method} consists of (i) a pre-processing $H^1$-conforming elliptic projection, (ii) a Stokes-like solve, and (iii) a post-processing $H^1$-conforming elliptic projection, all of which only require finite element spaces and routines that are routinely available in software packages. 

Our approach has similarities to \cite{Gallistl17} in the special case where $\Gamma_{cs}$ is connected although, as shown here, \cite{Gallistl17} will only recover the $C^1$-conforming finite element approximation of the original problem if conditions \ref{hp:wp-constants-boundary-assumption}, \ref{hp:tildewp-condition}, and \ref{hp:stokes-complex-bcs-discon-fem} happen to be satisfied. \Cref{sec:gen-boundary} deals with the case when $\Gamma_{cs}$ is not connected ($N > 1$), which brings a number of fresh difficulties that seem to have not been addressed previously. Nevertheless, the basis idea of \cref{sec:simple-gammacs-connected} is extended to the non-connected case in \cref{alg:abstract-method}.

While there are many other works that seek to relax the $C^1$-continuity required by a conforming method by rewriting the biharmonic and/or Kirchhoff plate problem as a mixed formulation (see e.g. \cite{Amara02,Bramble83,Chen18,Ciarlet74,Veiga07,Farrell22,Rafetseder18} and references therein), it appears that \cref{alg:abstract-method} is the first that is able to deliver the actual $H^2$-conforming approximation defined in \cref{eq:biharmonic-primal-fem}.  

\section*{Acknowledgments}

We would like to thank Aaron Baier-Reinio and Pablo Brubeck for reviewing the code and their helpful suggestions. We additionally thank Pablo Brubeck for providing an early version of Firedrake code to compute the inverse of a symmetric rank $k$ correction of a symmetric positive definite matrix.

\appendix

\section{Properties of the Stokes complex involving the Morgan-Scott elements}

The spaces associated with the Morgan-Scott elements $W_{\Gamma}^p$, $\tilde{W}_{\Gamma}^p$, and $\bdd{G}^{p-1}_{\Gamma}$ were defined in \cref{eq:wp-morgan-scott,eq:tildewp-morgan-scott,eq:thetap-morgan-scott}, respectively. The first task is to characterize the space $\rot \bdd{G}^{p-1}_{\Gamma}$, which turns out to be more complicated than one may expect. In fact, $\rot \bdd{G}_{\Gamma}^{p-1}$ is related to a finite element subspace $R_{\Gamma}^{p-2} \subset L^2(\Omega)$, $p \geq 2$, that satisfies non-standard conditions at some of the element vertices (which we denote by $\mathcal{V}^{\sharp}$, see below), as follows:
\begin{align}
	\label{eq:rp-definition}
	R_{\Gamma}^{p-2} := \left\{ r \in L^2_{\Gamma}(\Omega) : r|_{K} \in \mathcal{P}_{p-2}(K) \ \forall K \in \mathcal{T}, \ \sum_{i=1}^{|\mathcal{T}_{\bdd{a}}|} (-1)^{i} r|_{K_i}(\bdd{a}) = 0 \ \forall \bdd{a} \in \mathcal{V}^{\sharp} \right\},
\end{align}
where 
\begin{align}
	\label{eq:l2gamma-def}
	L^2_{\Gamma}(\Omega) := \{ r \in L^2(\Omega) : (r, 1) = 0 \text{ if } |\Gamma_f| = 0 \}.
\end{align}
The labeling used in the definition \cref{eq:rp-definition} for the elements abutting a vertex $\bdd{a}$ is shown in \cref{fig:patch-schema}, where $\mathcal{V}^{\sharp}$ is the set of mesh vertices located in the interior of $\Omega$ or the interior of $\Gamma_c$ for which all element edges abutting the vertex lie on exactly two lines. 
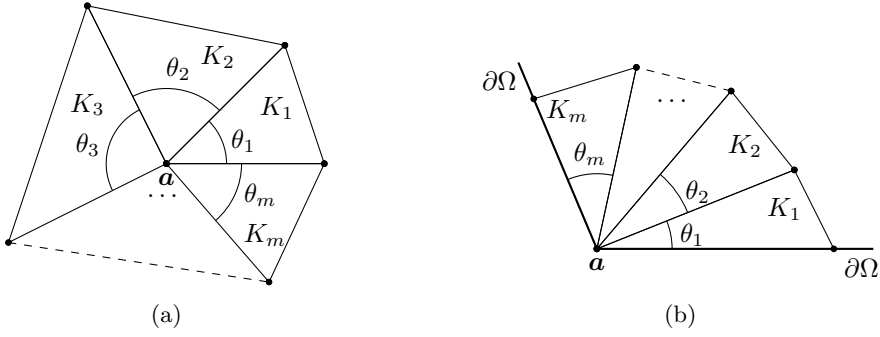
\begin{figure}[htb]
	\centering
	\begin{subfigure}[b]{0.45\linewidth}
		\centering
		\begin{tikzpicture}[scale=0.525]
			
			\coordinate (a) at (0, 0);
			\coordinate (a0) at (4, 0);
			\coordinate (a1) at (3, 3);
			\coordinate (a2) at (-2, 4);
			\coordinate (a3) at (-4, -2);
			\coordinate (a4) at (2.6, -3);				
			
			\filldraw (a) circle (2pt) node[align=center,below]{$\bdd{a}$}
			-- (a0) circle (2pt) 	
			-- (a1) circle (2pt) 
			-- (a);
			\filldraw (a) circle (2pt) node[align=center,below]{}	
			-- (a1) circle (2pt) 
			-- (a2) circle (2pt) 
			-- (a);
			\filldraw (a) circle (2pt) node[align=center,below]{}	
			-- (a2) circle (2pt) 
			-- (a3) circle (2pt) 
			-- (a);
			\filldraw (a) circle (2pt) node[align=center,below]{}	
			-- (a4) circle (2pt) 
			-- (a0) circle (2pt) 
			-- (a);
			\draw[dashed] (a3) -- (a4);
			\draw (2.8, 1.4) node(K0){$K_1$};
			\draw (1.3, 2.7) node(K1){$K_2$};
			\draw (-2, 1.5) node(K1){$K_3$};
			\draw (-0.1, -0.8) node(Kdots){$\ldots$};
			\draw (2.5, -1.8) node(Km){$K_m$};
			\pic["$\theta_1$"{anchor=west}, draw, angle radius=0.8cm, angle eccentricity=1] {angle=a0--a--a1};
			\pic["$\theta_2$"{anchor=south}, draw, angle radius=1cm, angle eccentricity=1] {angle=a1--a--a2};
			\pic["$\theta_3$"{anchor=east}, draw, angle radius=0.8cm, angle eccentricity=1] {angle=a2--a--a3};
			\pic["$\theta_m$"{anchor=west}, draw, angle radius=1cm, angle eccentricity=1] {angle=a4--a--a0};
			
		\end{tikzpicture}
		\caption{}
		\label{fig:internal schema}
	\end{subfigure}
	\hfill
	\begin{subfigure}[b]{0.5\linewidth}
		\centering
		\begin{tikzpicture}[scale=0.525]
			\filldraw (0,0) circle (2pt) node[align=center,below]{$\bdd{a}$}
			-- (6,0) circle (2pt) node[align=center,below]{}	
			-- (5,2) circle (2pt) node[align=center,above]{}
			-- (0,0);
			\filldraw (0,0) circle (2pt) node[align=center,below]{}	
			-- (5,2) circle (2pt) node[align=center,above]{}
			-- (3.4,4) circle (2pt) node[align=center,below]{}
			-- (0,0);
			\filldraw (0,0) circle (2pt) node[align=center,below]{}	
			-- (3.4,4) circle (2pt) node[align=center,above]{};
			\filldraw (1,4.6) circle (2pt) node[align=center,below]{}
			-- (0,0);
			\filldraw (0,0) circle (2pt) node[align=center,below]{}	
			-- (1,4.6) circle (2pt) node[align=center,above]{}
			-- (-1.6,3.8) circle (2pt) node[align=center,below]{}
			-- (0,0);
			
			\draw[dashed] (3.4,4) -- (1,4.6);
			
			\coordinate(a) at (0, 0);
			\coordinate(a0) at (6, 0);
			\coordinate(a1) at (5, 2);
			\coordinate(a2) at (3.4, 4);
			\coordinate(a3) at (1, 4.6);
			\coordinate(a4) at (-1.6, 3.8);
			
			\coordinate (a12) at ($(a)!2/3!(a1)$);
			\coordinate (a121) at ($(a)!2/3-1/sqrt(29)!(a1)$);
			\coordinate (a1205) at ($(a)!2/3-0.5/sqrt(29)!(a1)$);
			
			\coordinate (a22) at ($(a)!2/3!(a2)$);
			\coordinate (a221) at ($(a)!2/3-1/sqrt(27.56)!(a2)$);
			\coordinate (a2205) at ($(a)!2/3-0.5/sqrt(27.56)!(a2)$);
			
			\coordinate (a32) at ($(a)!2/3!(a3)$);
			\coordinate (a321) at ($(a)!2/3-1/sqrt(22.16)!(a3)$);
			\coordinate (a3205) at ($(a)!2/3-0.5/sqrt(22.16)!(a3)$);
			
			\coordinate (a42) at ($(a)!2/3!(a4)$);
			\coordinate (a421) at ($(a)!2/3-1/sqrt(17)!(a4)$);
			\coordinate (a4205) at ($(a)!2/3-0.5/sqrt(17)!(a4)$);

			\pic["$\theta_1$"{anchor=west}, draw, angle radius=1cm, angle eccentricity=1] {angle=a0--a--a1};
			\pic["$\theta_2$"{anchor=west}, draw, angle radius=1.3cm, angle eccentricity=1] {angle=a1--a--a2};
			\pic["$\theta_m$"{anchor=south}, draw, angle radius=1cm, angle eccentricity=1] {angle=a3--a--a4};
			
			\draw (4.75, 1) node(K0){$K_1$};
			\draw (3.75, 2.6) node(K1){$K_2$};
			\draw (1.9, 3.75) node(Kdots){$\ldots$};
			\draw (-0.75, 3.5) node(Km){$K_m$};
			
			\filldraw[thick] (0, 0) -- (7,0);
			\filldraw[thick] (0, 0) -- ($(-1.6, 3.8) + ($(a)!1/sqrt(17)!(a4)$) $);
			
			\draw (6.7, 0) node[align=center,below](G0){$\partial \Omega$};
			\draw ($(-1.6, 3.8) + ($(a)!0.5/sqrt(17)!(a4)$) $) node[align=center,left](Gm1){$\partial \Omega$};
		\end{tikzpicture}		
		\caption{}
		\label{fig:boundary schema}
	\end{subfigure}
	\caption{Notation for mesh around (a) an internal vertex $\bdd{a}$ and (b) a boundary vertex $\bdd{a}$, each abutting $m = |\mathcal{T}_{\bdd{a}}|$ elements.}
	\label{fig:patch-schema}
\end{figure}

A key quantity introduced in \cite{AinCP21LE,ScottVog85,Vogelius83divinv} related to the space $R_{\Gamma}^{p-2}$ is the function $\xi$ defined on the sets $\mathcal{V}_I$, respectively $\mathcal{V}_c$, consisting of mesh vertices located in the interior of $\Omega$, respectively on the interior of $\Gamma_c$, by the rule:
\begin{align*}
	\xi(\bdd{a}) = \sum_{i=1}^{|\mathcal{T}_{\bdd{a}}| - \eta_{\bdd{a}}} |\sin(\theta_i + \theta_{i+1})| \qquad \forall \bdd{a} \in \mathcal{V}_I \cup \mathcal{V}_c,
\end{align*}
where $\mathcal{T}_{\bdd{a}}$ denotes the set of elements abutting $\bdd{a}$ labeled as in \cref{fig:patch-schema}. The quantity $\eta_{\bdd{a}}$ equals 1 if $\bdd{a}$ lies on the domain boundary and equals 0 otherwise. Observe that $\xi(\bdd{a}) = 0$ if $\bdd{a} \in \mathcal{V}^{\sharp} \subseteq \mathcal{V}_I \cup \mathcal{V}_c$, where the set $\mathcal{V}^{\sharp}$ appears in the definition of $R_{\Gamma}^{p-2}$. We then define a corresponding quantity $\xi_{\mathcal{T}}$ for a mesh $\mathcal{T}$ to be
\begin{align}
	\label{eq:xi-mesh-def}
	\xi_{\mathcal{T}} = \min_{\bdd{a} \in (\mathcal{V}_I \cup \mathcal{V}_c) \setminus \mathcal{V}^{\sharp}} \xi(\bdd{a}) > 0.
\end{align}
Of course, it is possible for some families of meshes that this quantity can become arbitrarily small. However, it is a relatively simple matter to adjust such meshes to ensure that $\xi_{\mathcal{T}}$ remains bounded away from zero uniformly. For instance, in the case of an interior vertex, one can perturb the location of the offending vertex while preserving the mesh topology; see e.g. \cite[p. 35 Remark 2]{AinCP21LE} for other remedies. Despite the apparently rather obscure definition of $R_{\Gamma}^{p-2}$, the following result  \cite{AinCP21LE,ScottVog85,Vogelius83divinv} shows that $R_{\Gamma}^{p-2}$ and $\rot \bdd{G}^{p-1}_{\Gamma}$ coincide when $p \geq 5$:
\begin{lemma}
	\label{lem:invert-rot-rp}
	Let $p \geq 5$. Then $\rot \bdd{G}^{p-1}_{\Gamma} = R_{\Gamma}^{p-2}$ and for each $r \in R_{\Gamma}^{p-2}$, there exists $\bdd{\theta} \in \bdd{G}^{p-1}_{\Gamma}$ satisfying
	\begin{align}
		\label{eq:invert-rot-vp}
		\rot \bdd{\theta} = r \quad \text{and} \quad \|\bdd{\theta}\|_1 \leq C \xi_{\mathcal{T}}^{-1} \|r\|,
	\end{align}
	where $C > 0$ is independent of $r$, $h$, $p$, and $\xi_{\mathcal{T}}$.
\end{lemma}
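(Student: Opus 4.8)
The plan is to transport the statement to a known result on the stability of the divergence operator for Scott--Vogelius pairs and then translate back. Introduce the pointwise rotation $\bdd{\phi} := (\theta_2, -\theta_1)$, which is a linear automorphism of $[\mathcal{P}_{p-1}(K)]^2$ and satisfies $\dive\bdd{\phi} = \rot\bdd{\theta}$ and $\|\bdd{\phi}\|_1 = \|\bdd{\theta}\|_1$. Writing $\unitvec{n}=(t_2,-t_1)$, a direct computation shows that $\bdd{\theta}|_{\Gamma_c}=\bdd{0}$ is equivalent to $\bdd{\phi}|_{\Gamma_c}=\bdd{0}$ and that $\unitvec{t}\cdot\bdd{\theta}|_{\Gamma_s}=0$ is equivalent to $\unitvec{n}\cdot\bdd{\phi}|_{\Gamma_s}=0$. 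Hence $\bdd{\theta}\mapsto\bdd{\phi}$ is a norm-preserving isomorphism of $\bdd{G}^{p-1}_{\Gamma}$ onto
\begin{align*}
	\bdd{\Phi}^{p-1}_{\Gamma} := \{ \bdd{\phi}\in\bdd{H}^1(\Omega) : \bdd{\phi}|_K\in[\mathcal{P}_{p-1}(K)]^2 \ \forall K\in\mathcal{T}, \ \bdd{\phi}|_{\Gamma_c}=\bdd{0}, \ \unitvec{n}\cdot\bdd{\phi}|_{\Gamma_s}=0 \},
\end{align*}
under which $\rot\bdd{G}^{p-1}_{\Gamma} = \dive\bdd{\Phi}^{p-1}_{\Gamma}$ and \cref{eq:invert-rot-vp} becomes: for each $r\in\dive\bdd{\Phi}^{p-1}_{\Gamma}$ there is $\bdd{\phi}\in\bdd{\Phi}^{p-1}_{\Gamma}$ with $\dive\bdd{\phi}=r$ and $\|\bdd{\phi}\|_1\le C\xi_{\mathcal{T}}^{-1}\|r\|$. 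Since $p-1\ge4$, this is precisely the regime in which Scott--Vogelius divergence stability with explicit singular-vertex dependence is available \cite{AinCP21LE,ScottVog85,Vogelius83divinv}.

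\textbf{The easy inclusion.} I would verify $\dive\bdd{\Phi}^{p-1}_{\Gamma}\subseteq R_\Gamma^{p-2}$ (equivalently $\rot\bdd{G}^{p-1}_{\Gamma}\subseteq R_\Gamma^{p-2}$) directly. First, $\dive\bdd{\phi}|_K\in\mathcal{P}_{p-2}(K)$ by construction. When $|\Gamma_f|=0$ one has $\unitvec{n}\cdot\bdd{\phi}=0$ on all of $\Gamma=\bar\Gamma_c\cup\bar\Gamma_s$, so $\int_\Omega\dive\bdd{\phi}=\int_\Gamma\unitvec{n}\cdot\bdd{\phi}=0$ by the divergence theorem, giving $\dive\bdd{\phi}\in L^2_\Gamma(\Omega)$. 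Finally, at a vertex $\bdd{a}\in\mathcal{V}^{\sharp}$ all abutting edges lie on two lines, and the classical local linear-algebra computation for the divergence of a $\bdd{C}^0$ piecewise polynomial there (with the Dirichlet/normal conditions on $\Gamma_c$ supplying the needed relations at boundary vertices in $\mathcal{V}_c$) shows the one-sided values $r|_{K_i}(\bdd{a})$ satisfy the single relation $\sum_{i}(-1)^{i}r|_{K_i}(\bdd{a})=0$; I would reproduce this short argument following \cite{Vogelius83divinv,ScottVog85}.

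\textbf{The main task: surjectivity and the $\xi_{\mathcal{T}}^{-1}$ bound.} The remaining direction, and the crux of the proof, is that every $r\in R_\Gamma^{p-2}$ is $\dive\bdd{\phi}$ for some $\bdd{\phi}\in\bdd{\Phi}^{p-1}_{\Gamma}$ with $\|\bdd{\phi}\|_1\le C\xi_{\mathcal{T}}^{-1}\|r\|$, $C$ independent of $h$, $p$, and $\xi_{\mathcal{T}}$. My approach follows the established route: localize $r$ over a vertex-patch partition of unity, discharge the generic contributions with a mesh- and degree-robust Fortin/Bogovski\u{\i}-type right inverse of the divergence on the patches away from the problematic vertices (this is where $p-1\ge4$ is essential, guaranteeing local surjectivity onto the full $\mathcal{P}_{p-2}$ pressure), and treat the contributions concentrated near the nearly-singular vertices in $(\mathcal{V}_I\cup\mathcal{V}_c)\setminus\mathcal{V}^{\sharp}$ by the explicit local construction whose conditioning is controlled by $\xi(\bdd{a})^{-1}$; the worst vertex produces the factor $\xi_{\mathcal{T}}^{-1}$ of \cref{eq:xi-mesh-def}. (Alternatively, one may first establish $\dim\dive\bdd{\Phi}^{p-1}_{\Gamma}=\dim R_\Gamma^{p-2}$ by an Euler-characteristic count, so that the easy inclusion already forces equality, and then only bound the right inverse.) I expect the genuine obstacles to be (i) the polynomial-degree robustness of the constant, which rules out crude scaling and forces degree-explicit local inverses as in \cite{AinCP21LE}, and (ii) checking that the mixed boundary conditions do not spoil the construction: the free part $\Gamma_f$ imposes no constraint (matching the disappearance of the $L^2_\Gamma$ condition when $\Gamma_f\ne\emptyset$), $\Gamma_c$ is the classical Dirichlet case, and the normal-component condition on $\Gamma_s$ introduces no new singular vertices because $\mathcal{V}^{\sharp}$ and $\xi$ only see vertices interior to $\Omega$ or to $\Gamma_c$. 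Transporting $\bdd{\phi}$ back through the rotation yields $\bdd{\theta}\in\bdd{G}^{p-1}_{\Gamma}$ with $\rot\bdd{\theta}=r$ and \cref{eq:invert-rot-vp}, completing both the identity $\rot\bdd{G}^{p-1}_{\Gamma}=R_\Gamma^{p-2}$ and the bound.
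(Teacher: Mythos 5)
Your reduction via the pointwise rotation $\bdd{\theta}\mapsto(\theta_2,-\theta_1)$, which exchanges $\rot$ for $\dive$ and identifies $\bdd{G}^{p-1}_{\Gamma}$ with the Scott--Vogelius velocity space subject to the rotated boundary conditions ($\bdd{\phi}|_{\Gamma_c}=\bdd{0}$, $\unitvec{n}\cdot\bdd{\phi}|_{\Gamma_s}=0$), is exactly the paper's argument; the paper then simply quotes the characterization $R_{\Gamma}^{p-2}=\dive\tilde{\bdd{G}}_{\Gamma}^{p-1}$ and the $\xi_{\mathcal{T}}^{-1}$-bounded right inverse of the divergence from Theorems 4.1 and 5.1 of the Ainsworth--Parker reference you also invoke, rather than re-deriving the inclusion or the stability bound. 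So the proposal is correct and follows essentially the same route, with your sketched localization/singular-vertex argument being a summary of how the cited results are themselves established.
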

\begin{proof}
	Thanks to \cite[Theorem 4.1]{AinCP21LE}, $R_{\Gamma}^{p-2} = \dive \tilde{\bdd{G}}_{\Gamma}^{p-1}$, where
	\begin{align}
		\label{eq:tilde-theta-def}
		\tilde{\bdd{G}}_{\Gamma}^{p-1} &:= \{ \bdd{v} \in \bdd{C}^0(\Omega) : \bdd{v}|_{\Gamma_c} = \bdd{0}, \ \bdd{v}\cdot \unitvec{n}|_{\Gamma_s} = 0, \text{ and }  \bdd{v}|_K \in \mathcal{P}_{p-1}(K)^2 \ \forall K \in \mathcal{T} \}. 
	\end{align}
	The identity $R_{\Gamma}^{p-2} = \dive \tilde{\bdd{G}}_{\Gamma}^{p-1} = \rot \bdd{G}^{p-1}_{\Gamma}$ is an immediate consequence of the fact that $\bdd{u} \in \bdd{G}^{p-1}_{\Gamma}$ if and only if $\bdd{v} = (-u_2, u_1) \in \tilde{\bdd{G}}_{\Gamma}^{p-1}$ and $\rot \bdd{u} = \dive \bdd{v}$.
	
	For every $r \in R_{\Gamma}^{p-2}$, there exists $\bdd{v} \in \tilde{\bdd{G}}_{\Gamma}^p$ satisfying $\dive \bdd{v} = r$ and $\|\bdd{v}\|_1 \leq C \xi_{\mathcal{T}}^{-1} \|r\|$ by \cite[Theorem 5.1]{AinCP21LE}. The function $\bdd{\theta} = (-v_2, v_1) \in \bdd{G}^{p-1}_{\Gamma}$ then satisfies \cref{eq:invert-rot-vp}, which completes the proof.
\end{proof}

\begin{remark}
	\Cref{lem:invert-rot-rp} is sharp in the sense that for $p \leq 4$, there exist meshes such that $\rot \bdd{G}^{p-1}_{\Gamma} \subsetneq R_{\Gamma}^{p-2}$; see e.g. section 7 of \cite{ScottVog84}.
\end{remark}

\subsection{Uniform bounds on the inf-sup constant in \cref{eq:stokes-morgan-scott-gen-inf-sup}}

We begin with the following result at the continuous level:
\begin{lemma}
	\label{lem:invert-rot-free-averages}
	For every $r \in L^2_{\Gamma}(\Omega)$ and $\vec{\omega} \in \mathbb{R}^N$ satisfying \cref{eq:kappaij-constraint}, there exists $\bdd{\theta} \in \bdd{\Theta}_{\Gamma}(\Omega)$ satisfying
	\begin{align}
		\label{eq:invert-rot-free-averages}
		\rot \bdd{\theta} = r, \quad (\unitvec{t} \cdot \bdd{\theta}, 1)_{\Gamma_f^{(i)}} = \omega_{i}, \quad 1 \leq i \leq N, \quad \text{and} \quad \|\bdd{\theta}\|_{1} \leq C\left( \|r\| + |\vec{\omega}| \right),
	\end{align}
	where $C > 0$ is independent of $r$ and $\vec{\omega}$.
\end{lemma}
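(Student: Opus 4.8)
The plan is to transcribe the proof of \cref{lem:invert-rot-with-free-averages} to the continuous level, with its two discrete ingredients — a fixed $\rot$-preimage in $\mathbbb{G}_{\Gamma}$ and the potential supplied by \ref{hp:wp-constants-boundary-assumption} — replaced by their infinite-dimensional counterparts, the latter now being elementary to construct directly.

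First I would establish a bounded $\bdd{H}^1$ right inverse of $\rot$ that respects the homogeneous conditions on $\Gamma_{cs}$: for every $r \in L^2_{\Gamma}(\Omega)$ there is $\bdd{\psi} \in \bdd{H}^1(\Omega)$ with $\bdd{\psi}|_{\Gamma_{cs}} = \bdd{0}$ (hence in particular $\bdd{\psi} \in \bdd{\Theta}_{\Gamma}(\Omega)$), $\rot \bdd{\psi} = r$, and $\|\bdd{\psi}\|_1 \le C\|r\|$. Rotating the field by a right angle intertwines $\rot$ with $\dive$ (up to a sign) and maps $\{\bdd{v} \in \bdd{H}^1(\Omega) : \bdd{v}|_{\Gamma_{cs}} = \bdd{0}\}$ onto itself, so this is equivalent to the classical statement that $\dive$ admits a bounded right inverse from $\{\bdd{v} \in \bdd{H}^1(\Omega) : \bdd{v}|_{\Gamma_{cs}} = \bdd{0}\}$ onto $L^2_{\Gamma}(\Omega)$; the only compatibility condition is $(r,1)_{\Omega} = 0$, which is needed precisely in the degenerate case $|\Gamma_f| = 0$ (i.e. $\Gamma_{cs} = \Gamma$) and is exactly how $L^2_{\Gamma}(\Omega)$ is defined (see e.g. \cite{GiraultRaviart86}). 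I expect this divergence right-inverse with mixed boundary data to be the only genuinely non-routine input.

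Next I would build the continuous analogue of \ref{hp:wp-constants-boundary-assumption}: given $\vec{\mu} = (\mu_1,\dots,\mu_N) \in \mathbb{R}^N$, a function $v \in H^2(\Omega)$ with $v|_{\Gamma_{cs}^{(i)}} = \mu_i$ for $1 \le i \le N$, $\partial_n v|_{\Gamma} = 0$, $\grad v \in \bdd{\Theta}_{\Gamma}(\Omega)$, and $\|\grad v\|_1 \le C|\vec{\mu}|$. Since the arcs $\Gamma_{cs}^{(i)}$ are closed and pairwise disjoint, one may choose smooth $\phi_i$ with $\phi_i \equiv 1$ near $\Gamma_{cs}^{(i)}$, $\phi_i \equiv 0$ near the remaining arcs, and $\partial_n \phi_i \equiv 0$ on $\Gamma$ (the last arranged edge by edge, since $\Gamma$ is polygonal), and set $v := \sum_{i=1}^N \mu_i \phi_i$. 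Then $v$ is constant on each $\Gamma_{cs}^{(i)}$ and $\partial_n v|_{\Gamma} = 0$, so $\grad v|_{\Gamma_c} = \bdd{0}$ and $\unitvec{t}\cdot\grad v|_{\Gamma_s} = \partial_t v|_{\Gamma_s} = 0$, giving $\grad v \in \bdd{\Theta}_{\Gamma}(\Omega)$ (and $\rot\grad v \equiv 0$), while linearity in $\vec{\mu}$ yields the bound.

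With these two tools the rest follows \cref{lem:invert-rot-with-free-averages} verbatim: set $\mu_1 := 0$ and $\mu_{i+1} := \mu_i + \omega_i - (\unitvec{t}\cdot\bdd{\psi},1)_{\Gamma_f^{(i)}}$ for $1 \le i \le N$; Stokes' theorem together with $\bdd{\psi}|_{\Gamma_{cs}} = \bdd{0}$ gives $\sum_{i=1}^N (\unitvec{t}\cdot\bdd{\psi},1)_{\Gamma_f^{(i)}} = \int_{\Omega} \rot \bdd{\psi}\,d\bdd{x} = \int_{\Omega} r\,d\bdd{x}$, so by \cref{eq:kappaij-constraint} one has $\mu_{N+1} = \sum_i \omega_i - \int_{\Omega} r = 0 = \mu_1$ and $|\vec{\mu}| \le C(|\vec{\omega}| + \|\bdd{\psi}\|_1) \le C(|\vec{\omega}| + \|r\|)$ by the trace theorem. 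Taking $v$ as above for this $\vec{\mu}$ and setting $\bdd{\theta} := \bdd{\psi} + \grad v \in \bdd{\Theta}_{\Gamma}(\Omega)$ gives $\rot \bdd{\theta} = r$ and, using $\int_{\Gamma_f^{(i)}} \partial_t v\,ds = v|_{\Gamma_{cs}^{(i+1)}} - v|_{\Gamma_{cs}^{(i)}} = \mu_{i+1} - \mu_i$ (fundamental theorem of calculus along $\Gamma$, cf. \cref{eq:proof:ftc-wp}, with $\Gamma_{cs}^{(N+1)} := \Gamma_{cs}^{(1)}$), $(\unitvec{t}\cdot\bdd{\theta},1)_{\Gamma_f^{(i)}} = (\unitvec{t}\cdot\bdd{\psi},1)_{\Gamma_f^{(i)}} + \mu_{i+1} - \mu_i = \omega_i$ for all $1 \le i \le N$; finally $\|\bdd{\theta}\|_1 \le \|\bdd{\psi}\|_1 + \|\grad v\|_1 \le C\|r\| + C|\vec{\mu}| \le C(\|r\| + |\vec{\omega}|)$, as claimed.
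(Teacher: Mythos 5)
Your proof is correct, but it reaches the conclusion by a different route than the paper at the key correction step. Both arguments begin the same way, with a bounded right inverse of $\rot$ in $\bdd{\Theta}_{\Gamma}(\Omega)$: the paper cites its companion result \cite[Lemma A.1]{AinCP21LE} for a field $\bdd{v} \in \bdd{\Theta}_{\Gamma}(\Omega)$ with $\rot\bdd{v} = r$ and $\|\bdd{v}\|_1 \le C\|r\|$, whereas you obtain it by rotating a divergence right inverse with homogeneous Dirichlet data on $\Gamma_{cs}$; this is indeed the one non-routine input, and note that it is not literally in \cite{GiraultRaviart86} for partial boundary conditions (the paper's citation covers exactly this case), and also that you ask for the slightly stronger condition $\bdd{\psi}|_{\Gamma_{cs}} = \bdd{0}$ when membership in $\bdd{\Theta}_{\Gamma}(\Omega)$ already suffices, since only the tangential trace on $\Gamma_{cs}$ must vanish for the Stokes-theorem step. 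The divergence occurs in how the tangential averages on the $\Gamma_f^{(i)}$ are corrected: you transcribe the proof of \cref{lem:invert-rot-with-free-averages}, building a continuous analogue of \ref{hp:wp-constants-boundary-assumption} by hand (a potential $v = \sum_i \mu_i \phi_i$ from cutoffs that are constant near each $\Gamma_{cs}^{(i)}$) and adding the curl-free field $\grad v$, with \cref{eq:kappaij-constraint} and \cref{eq:proof:ftc-wp} closing the telescoping argument exactly as in the discrete case; the paper instead prescribes tangential bump data $\bdd{g}$ supported on one edge of each $\Gamma_f^{(i)}$, checks that the total circulation vanishes by \cref{eq:kappaij-constraint}, and invokes \cite[Lemma 2.2, p.~24]{GiraultRaviart86} to produce a rot-free $\bdd{H}^1$ correction with trace $\bdd{g}$ and norm controlled by $\|\bdd{g}\|_{1/2,\Gamma} \le C|\vec{\omega}\,| + C\|r\|$. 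Your construction is more elementary and keeps the continuous and discrete proofs structurally parallel, at the modest cost of verifying the cutoff properties (the $\partial_n v|_\Gamma = 0$ requirement on $\Gamma_f$ is in fact not needed, since only $\grad v \in \bdd{\Theta}_{\Gamma}(\Omega)$ and the constant traces enter the argument); the paper's construction avoids cutoffs altogether by outsourcing the existence of the rot-free correction to a standard trace-lifting result. Both yield the stated bound $\|\bdd{\theta}\|_1 \le C(\|r\| + |\vec{\omega}\,|)$ with $C$ independent of $r$ and $\vec{\omega}$.
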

\begin{proof}
	Let $r \in L^2_{\Gamma}(\Omega)$ be given. By \cite[Lemma A.1]{AinCP21LE}, there exists $\bdd{v} \in \bdd{\Theta}_{\Gamma}(\Omega)$ satisfying $\rot \bdd{v} = q$ and $\|\bdd{v}\|_{1} \leq C \|r\|$. We now construct a rot-free correction to $\bdd{v}$ to interpolate the values $\omega_{i}$ on $\Gamma_f^{(i)}$.

	For $1 \leq i \leq N$, we define functions $\bdd{g}_{i} \in \bdd{L}^2(\Gamma_f^{(i)})$ as follows. Let $\Gamma_k \subset \Gamma_f^{(i)}$ be an edge of $\Gamma$ and set
	\begin{align*}
		\bdd{g}_{i}:= \begin{cases}
			[\omega_{i} - (\unitvec{t} \cdot \bdd{v}, 1)_{\Gamma_f^{(i)}} ] \phi_k \unitvec{t} & \text{on } \Gamma_k, \\
			\bdd{0} & \text{on } \Gamma_f^{(i)} \setminus \Gamma_k,
		\end{cases}
	\end{align*}
	where $\phi_k \in C^{\infty}_c(\Gamma_k)$ is any smooth function satisfying $(\phi_k, 1)_{\Gamma_{k}} = 1$. The function $\bdd{g} \in \bdd{L}^2(\partial \Omega)$ defined by
	\begin{align*}
		\bdd{g} = \begin{cases}
			\bdd{g}_{i} & \text{on } \Gamma_f^{(i)}, \ 1 \leq i \leq N, \\
			\bdd{0} & \text{otherwise},
		\end{cases}
	\end{align*}
	then satisfies $\bdd{g} \in \bdd{H}^{1/2}(\Gamma)$ and
	\begin{align*}
		\int_{\Gamma} \unitvec{t} \cdot \bdd{g} \ ds = \sum_{i=1}^{N} \omega_{i} - \int_{\Gamma} \unitvec{t} \cdot \bdd{v} \ ds = \int_{\Omega} (r - \rot \bdd{v}) \ d\bdd{x} = 0,
	\end{align*}
	where we used \cref{eq:kappaij-constraint}. \cite[Lemma 2.2, p. 24]{GiraultRaviart86} asserts the existence of $\bdd{u} \in \bdd{H}^1(\Omega)$ satisfying 
	\begin{align*}
		\rot \bdd{u} \equiv 0, \quad \bdd{u}|_{\Gamma} = \bdd{g}, \quad \text{and} \quad \|\bdd{u}\|_{1} \leq C \|\bdd{g}\|_{1/2, \Gamma},
	\end{align*}
	where $\|\cdot\|_{1/2, \Gamma}$ is the $H^{1/2}(\Gamma)$ norm. In particular, $\bdd{u} \in \bdd{\Theta}_{\Gamma}(\Omega)$. Consequently, the function $\bdd{\theta} := \bdd{v} + \bdd{u}$ then satisfies $\bdd{\theta} \in \bdd{\Theta}_{\Gamma}(\Omega)$,
	\begin{align*}
		\rot \bdd{\theta} = r, \quad (\unitvec{t} \cdot \bdd{\theta}, 1)_{\Gamma_f^{(i)}} = \omega_{i}, \quad 1 \leq i \leq N,
	\end{align*}
	and $\| \bdd{\theta}\|_{1} \leq \| \bdd{v} \|_1 + \|\bdd{u}\|_{1} 	\leq C (\|r\| + \|\bdd{g}\|_{1/2, \Gamma}) \leq C (\|r\| + |\vec{\omega}|)$.
\end{proof}

\noindent The discrete analogue of \cref{lem:invert-rot-free-averages} is the following:
\begin{lemma}
	\label{lem:invert-rot-free-averages-discrete}
	Let $p \geq 5$. For every $r \in R_{\Gamma}^{p-2} = \rot \bdd{G}^{p-1}_{\Gamma}$ and $\vec{\omega} \in \mathbb{R}^N$ satisfying \cref{eq:kappaij-constraint}, there exists $\bdd{\theta} \in \bdd{G}^{p-1}_{\Gamma}$ satisfying 
	\begin{align}
		\label{eq:invert-rot-free-averages-discrete}
		\rot \bdd{\theta} = r, \quad (\unitvec{t} \cdot \bdd{\theta}, 1)_{\Gamma_f^{(i)}} = \omega_{i}, \quad 1 \leq i \leq N, \quad \text{and} \quad  	\|\bdd{\theta}\|_{1} \leq C \left( \xi_{\mathcal{T}}^{-1} \|r\| + |\vec{\omega}|\right),
	\end{align}
	where $C > 0$ is independent of $r$, $\vec{\omega}$, $h$, $p$, and $\xi_{\mathcal{T}}$.
\end{lemma}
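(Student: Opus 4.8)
The plan is to reproduce the architecture of the continuous result \cref{lem:invert-rot-free-averages} at the discrete level: write $\bdd{\theta} = \bdd{\psi} + \grad v$, where $\bdd{\psi} \in \bdd{G}^{p-1}_{\Gamma}$ inverts $\rot$ and $\grad v$ is a discrete rot-free correction that repairs the $N$ prescribed tangential averages over the $\Gamma_f^{(i)}$, all while keeping constants independent of $h$, $p$, and $\xi_{\mathcal{T}}$.

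First I would invoke \cref{lem:invert-rot-rp} to obtain $\bdd{\psi} \in \bdd{G}^{p-1}_{\Gamma}$ with $\rot \bdd{\psi} = r$ and $\|\bdd{\psi}\|_1 \le C \xi_{\mathcal{T}}^{-1} \|r\|$. Then I would record the average defects $\tau_i := \omega_i - (\unitvec{t} \cdot \bdd{\psi}, 1)_{\Gamma_f^{(i)}}$, $1 \le i \le N$. Because $\bdd{\psi} \in \bdd{\Theta}_{\Gamma}(\Omega)$ has vanishing tangential trace on $\Gamma_{cs}$, Stokes' theorem gives $\sum_{i} (\unitvec{t} \cdot \bdd{\psi}, 1)_{\Gamma_f^{(i)}} = \int_{\Gamma} \unitvec{t} \cdot \bdd{\psi} \ ds = \int_{\Omega} \rot \bdd{\psi} \ d\bdd{x} = \int_{\Omega} r \ d\bdd{x}$, so the compatibility condition \cref{eq:kappaij-constraint} forces $\sum_i \tau_i = 0$; the trace theorem gives $|\tau_i| \le |\omega_i| + C\|\bdd{\psi}\|_1$, hence $|\vec{\tau}| \le C(|\vec{\omega}| + \xi_{\mathcal{T}}^{-1}\|r\|)$. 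Setting $\mu_1 := 0$ and $\mu_{i+1} := \mu_i + \tau_i$ (with $\Gamma_{cs}^{(N+1)} := \Gamma_{cs}^{(1)}$, so $\mu_{N+1} = \mu_1$ by the zero-sum property) yields boundary constants with $|\vec{\mu}| := \sum_{i=1}^{N} |\mu_i| \le C|\vec{\tau}|$, since $N$ is fixed.

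The crux is producing the rot-free correction with a \emph{mesh- and degree-independent} bound. I want $v \in W^5 \subseteq W^p$ with $v|_{\Gamma_{cs}^{(i)}} = \mu_i$ and $\partial_n v|_{\Gamma} = 0$ (so that $v \in \mathbb{V}_{\Gamma}$ and $\grad v \in \bdd{G}^{4}_{\Gamma} \subseteq \bdd{G}^{p-1}_{\Gamma}$), together with $\|\grad v\|_1 \le \|v\|_2 \le C|\vec{\mu}|$. The naive construction that establishes \ref{hp:wp-constants-boundary-assumption} in \cref{lem:vp-specify-constants} — setting the boundary function-value degrees of freedom to $\mu_i$ and the rest to zero — will \emph{not} do here, since a scaling argument shows its $H^2$-norm grows like $h^{-3/2}$ on a refined mesh. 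Instead I would fix, once and for all, cutoff functions $\chi_i \in C^{\infty}(\bar{\Omega})$ equal to $1$ near $\overline{\Gamma_{cs}^{(i)}}$, supported in pairwise disjoint neighborhoods (possible since the $\overline{\Gamma_{cs}^{(i)}}$ are disjoint, being separated by the $\Gamma_f^{(j)}$), and with $\partial_n \chi_i \equiv 0$ on $\Gamma$; then $\Phi := \sum_i \mu_i \chi_i$ satisfies $\Phi|_{\Gamma_{cs}^{(i)}} = \mu_i$, $\partial_n \Phi|_{\Gamma} = 0$, and $\|\Phi\|_4 \le C_{\Omega} |\vec{\mu}|$. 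Taking $v := \Pi_h \Phi$ to be the degree-$5$ Argyris interpolant of $\Phi$ (well-defined since $\Phi$ is smooth), the fact that $\Phi$ is locally constant near $\overline{\Gamma_{cs}^{(i)}}$ with vanishing normal derivative forces the boundary edge traces of $v$ and of $\partial_n v$ to coincide with those data, so $v \in \mathbb{V}_{\Gamma}$ and $\grad v \in \bdd{\Theta}_{\Gamma}(\Omega)$ with $\grad v|_K \in [\mathcal{P}_4(K)]^2$. Finally, $H^2$-stability of the low-order $C^1$ nodal interpolation on shape-regular meshes gives $\|v\|_2 \le C\|\Phi\|_4 \le C(|\vec{\omega}| + \xi_{\mathcal{T}}^{-1}\|r\|)$, with $C$ depending only on shape regularity (in particular independent of $p$ and $\xi_{\mathcal{T}}$).

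Then I would set $\bdd{\theta} := \bdd{\psi} + \grad v \in \bdd{G}^{p-1}_{\Gamma}$: clearly $\rot \bdd{\theta} = r$, and the fundamental theorem of calculus along each $\Gamma_f^{(i)}$ (the identity \cref{eq:proof:ftc-wp}) gives $(\unitvec{t} \cdot \grad v, 1)_{\Gamma_f^{(i)}} = v|_{\Gamma_{cs}^{(i+1)}} - v|_{\Gamma_{cs}^{(i)}} = \mu_{i+1} - \mu_i = \tau_i$, whence $(\unitvec{t} \cdot \bdd{\theta}, 1)_{\Gamma_f^{(i)}} = (\unitvec{t} \cdot \bdd{\psi}, 1)_{\Gamma_f^{(i)}} + \tau_i = \omega_i$ for $1 \le i \le N$; the stated norm bound then follows from the triangle inequality. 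The only genuine difficulty is the one flagged above: replacing the non-uniform degree-of-freedom construction by the Argyris interpolant of a fixed smooth potential and invoking its $H^2$-stability. Everything else is bookkeeping that parallels the proofs of \cref{lem:invert-rot-with-free-averages,lem:invert-rot-free-averages}.
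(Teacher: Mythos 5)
Your proof is correct, but it follows a genuinely different route from the paper's. The paper does not reuse \cref{lem:invert-rot-rp} as a black box; instead it builds $\bdd{\theta}$ in three explicit steps: a field whose $\rot$ matches $r$ at the element vertices with the $\xi_{\mathcal{T}}^{-1}$-bound (via [AinCP21LE, Cor.~5.1]), then a degree-4 correction whose vertex values come from a Scott--Zhang projection of the \emph{continuous} solution $\bdd{\Psi}$ of \cref{lem:invert-rot-free-averages} and whose edge/element moments are matched to $\bdd{\Psi}$ (this is what enforces $(\unitvec{t}\cdot\bdd{\eta},1)_{\Gamma_f^{(i)}}=\omega_i$ and the elementwise compatibility), and finally elementwise $\bdd{H}^1_0$ bubble corrections (via [AinCP19StokesI, Thm.~3.4]) to achieve $\rot\bdd{\theta}=r$ exactly. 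You instead invoke \cref{lem:invert-rot-rp} for the uniform-in-$h,p$ inversion of $\rot$ and repair only the $N$ tangential averages by a curl-free term $\grad v$, with $v$ the Argyris interpolant of a fixed smooth cutoff potential $\Phi=\sum_i\mu_i\chi_i$; since the closed components $\overline{\Gamma_{cs}^{(i)}}$ lie in the fixed regions where $\Phi\equiv\mu_i$, the Argyris degrees of freedom force $v|_{\Gamma_{cs}^{(i)}}=\mu_i$ and $\partial_n v|_{\Gamma_{cs}}=0$ exactly on every mesh, so $\grad v\in\bdd{G}^{4}_{\Gamma}\subseteq\bdd{G}^{p-1}_{\Gamma}$, and the interpolation stability $\|v\|_2\leq C\|\Phi\|_4\leq C|\vec{\mu}|$ gives the required $h$- and $p$-independent bound; your diagnosis that the naive degree-of-freedom construction behind \ref{hp:wp-constants-boundary-assumption} scales like $h^{-3/2}$ and cannot be used here is exactly right and is the key point. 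What each approach buys: yours is shorter, avoids the continuous lemma and the Scott--Zhang/bubble machinery, and leans on the already-proved \cref{lem:invert-rot-rp}; the paper's one-pass construction avoids introducing an auxiliary $C^1$ interpolation operator and keeps everything within the toolkit of [AinCP21LE, AinCP19StokesI]. Two cosmetic remarks: the requirement $\partial_n\chi_i\equiv 0$ on $\Gamma$ is superfluous (only constancy of $\Phi$ near $\overline{\Gamma_{cs}^{(i)}}$ matters, since the conditions defining $\bdd{\Theta}_{\Gamma}(\Omega)$ concern only $\Gamma_c\cup\Gamma_s\subset\Gamma_{cs}$), and your constants are allowed to depend on $N$ and on the fixed cutoffs, which is consistent with the statement.
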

\begin{proof}
	Let $r \in R_{\Gamma}^{p-2}$ and $\vec{\omega} \in \mathbb{R}^N$ be as in the statement of the lemma.
	
	\textbf{Step 1.} Thanks to \cite[Corollary 5.1]{AinCP21LE}, there exists $\tilde{\bdd{\phi}} \in \tilde{\bdd{\Theta}}_{\Gamma}^{p-1}$ satisfying the following on all $K \in \mathcal{T}$:
	\begin{align*}
		\dive \tilde{\bdd{\phi}}|_{K}(\bdd{a}) = r|_{K}(\bdd{a}) \quad \forall \bdd{a} \in \mathcal{V}_K \quad \text{and} \quad h_K^{-1} \| \tilde{\bdd{\phi}} \|_K + | \tilde{\bdd{\phi}} |_{1, K} \leq C \xi_{\mathcal{T}}^{-1} \|r\|_{K},
	\end{align*}
	where $\tilde{\bdd{\Theta}}_{\Gamma}^{p-1}$ is defined in \cref{eq:tilde-theta-def} and $\mathcal{V}_K$ denotes the vertices of $K$. The function $\bdd{\phi} = (-\tilde{\phi}_2, \tilde{\phi}_1)$ satisfies
	\begin{align*}
		\rot \bdd{\phi}|_{K}(\bdd{a}) = r|_{K}(\bdd{a}) \quad \forall \bdd{a} \in \mathcal{V}_K \quad \text{and} \quad h_K^{-1} \| \bdd{\phi} \|_K + | \bdd{\phi} |_{1, K} \leq C \xi_{\mathcal{T}}^{-1} \|r\|_{K}.
	\end{align*}
	
	\textbf{Step 2.}  Let $\bdd{\Psi} \in \bdd{\Theta}_{\Gamma}(\Omega)$ be given by \cref{lem:invert-rot-free-averages} and let $\Pi_{SZ} : \bdd{\Theta}_{\Gamma}(\Omega) \to \bdd{\Theta}_{\Gamma}^{1}$ denote the linear Scott-Zhang projection \cite{Scott90}. We define $\bdd{u} \in \bdd{\Theta}_{\Gamma}^4$ by assigning degrees of freedom as follows:
	\begin{alignat*}{2}
		\bdd{u}(\bdd{a}) &= \Pi_{SZ} \bdd{\Psi}(\bdd{a}) \qquad & &\forall \bdd{a} \in \mathcal{V}, \\
		\grad \bdd{u}(\bdd{a}) &= \bdd{0} \qquad & &\forall \bdd{a} \in \mathcal{V}, \\
		\int_{\gamma} \bdd{u} \ ds &= \int_{\gamma} (\bdd{\Psi} - \bdd{\phi}) \ ds \qquad & &\forall \gamma \in \mathcal{E}, \\
		\int_{K} \bdd{u} \cdot \bdd{q} \ d\bdd{x} &= \int_{K} (\bdd{\Psi} - \bdd{\phi}) \cdot \bdd{q} \ ds \qquad & &\forall \bdd{q} \in [\mathcal{P}_0(K)]^2, \ \forall K \in \mathcal{T}.
	\end{alignat*}
	Arguing as in the proof of \cite[Lemma 5.1]{AinCP21LE}, the function $\bdd{\eta} = \bdd{\phi} + \bdd{u}$ satisfies $\bdd{\eta} \in \bdd{\Theta}_{\Gamma}^{p}$,
	\begin{align*}
		\rot \bdd{\eta}|_{K}(\bdd{a}) = r|_{K}(\bdd{a}) \quad \forall \bdd{a} \in \mathcal{V}_K \quad \text{and} \quad \int_{K} \rot \bdd{\eta} \ d\bdd{x} = \int_{K} r \ d\bdd{x} \quad \forall K \in \mathcal{T}.
	\end{align*}
	Additionally,
	\begin{align*}
		\int_{\gamma} \bdd{\eta} \ ds = \int_{\gamma} \bdd{\Psi} \ ds \quad \forall \gamma \in \mathcal{E} \quad \implies \quad (\unitvec{t} \cdot \bdd{\eta}, 1 )_{\Gamma_f^{(i)}} = \omega_{i}, \quad 1 \leq i \leq N,
	\end{align*}
	where $\mathcal{E}$ denotes the edges of the mesh. Moreover, arguing as in the proof of \cite[Lemma 3.7]{AinCP19StokesI}, we may show that 
	\begin{align*}
		\| \bdd{\eta} \|_{1} \leq C \left( \| \bdd{\phi}\|_{1} + \|\bdd{\Psi}\|_{1} \right) \leq C \left( \xi_{\mathcal{T}}^{-1} \|r\| + |\vec{\omega}| \right).
	\end{align*}

	\textbf{Step 3.} For each $K \in \mathcal{T}$, there exists $\tilde{\bdd{\psi}}^K \in [\mathcal{P}_{p-1}(K)]^2 \cap \bdd{H}^1_0(K)$ satisfying
	\begin{align*}
		\dive \tilde{\bdd{\psi}}^K = (r - \rot \bdd{\eta})_{K} \quad \text{and} \quad \| \tilde{\bdd{\psi}}^K \|_{1, K} \leq C \left( \|r\|_K + \|\bdd{\eta}\|_{1, K} \right)
	\end{align*}
	thanks to \cite[Theorem 3.4]{AinCP19StokesI}. Thus, the function $\bdd{\theta} \in \bdd{G}^{p-1}_{\Gamma}$ defined elementwise by
	\begin{align*}
		\bdd{\theta}|_{K} := \bdd{\eta}|_{K} + (-\tilde{\psi}^K_2, \tilde{\psi}^K_{1}) \qquad \text{on } K
	\end{align*}
	satisfies \cref{eq:invert-rot-free-averages-discrete}.
\end{proof}	

An immediate consequence of \cref{lem:invert-rot-free-averages-discrete} is a uniform in $h$ and $p$ lower bound on the inf-sup constant $\beta_p$ \cref{eq:stokes-morgan-scott-gen-inf-sup}:
\begin{corollary}
	\label{cor:stokes-morgan-scott-gen-inf-sup}
	For every $r \in \rot \bdd{G}^{p-1}_{\Gamma}$ and $\vec{\kappa} \in \mathbb{R}^{N-1}$, there exists $\bdd{\theta} \in \bdd{G}^{p-1}_{\Gamma}$ satisfying
	\begin{align}
		\rot \bdd{\theta} = r, \quad \int_{\Gamma_f^{(i)}} \unitvec{t} \cdot \bdd{\theta} \ ds = \kappa_i, \quad 1 \leq i \leq N-1, \quad \text{and} \quad \|\bdd{\theta}\|_{1} \leq C \left( \xi_{\mathcal{T}}^{-1} \|r\| + |\vec{\kappa}| \right),
	\end{align}
	where $C > 0$ is independent of $r$, $\vec{\kappa}$, $h$, $p$, and $\xi_{\mathcal{T}}$. Consequently, the spaces $\bdd{G}^{p-1}_{\Gamma} \times (\rot \bdd{G}^{p-1}_{\Gamma} \times \mathbb{R}^{N-1})$ are uniformly inf-sup stable:
	\begin{align}
		\label{eq:stokes-morgan-scott-inf-sup-bound}
		\beta_X \geq \xi_{\mathcal{T}} \beta_0,
	\end{align}
	where $\beta_0 > 0$ is independent of $h$, $p$, and $\xi_{\mathcal{T}}$ and $\beta_X$ is defined in \cref{eq:stokes-morgan-scott-gen-inf-sup}.
\end{corollary}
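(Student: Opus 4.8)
The plan is to obtain the Corollary almost directly from \cref{lem:invert-rot-free-averages-discrete} (applicable since $p \geq 5$), the only real content being a judicious choice of the extra free parameter $\omega_N$ together with elementary norm bookkeeping. First I would construct $\bdd{\theta}$: given $r \in \rot \bdd{G}^{p-1}_{\Gamma} = R_{\Gamma}^{p-2}$ and $\vec{\kappa} \in \mathbb{R}^{N-1}$, set $\omega_i := \kappa_i$ for $1 \leq i \leq N-1$ and $\omega_N := \int_{\Omega} r \, d\bdd{x} - \sum_{i=1}^{N-1}\kappa_i$, so that $\vec{\omega} = (\omega_1,\dots,\omega_N)$ satisfies the compatibility condition \cref{eq:kappaij-constraint}. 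Then \cref{lem:invert-rot-free-averages-discrete} produces $\bdd{\theta} \in \bdd{G}^{p-1}_{\Gamma}$ with $\rot \bdd{\theta} = r$ and $(\unitvec{t}\cdot\bdd{\theta}, 1)_{\Gamma_f^{(i)}} = \omega_i$ for all $1 \leq i \leq N$; the first $N-1$ of these are exactly the required constraints $\int_{\Gamma_f^{(i)}} \unitvec{t}\cdot\bdd{\theta}\, ds = \kappa_i$, and the $N$-th is not imposed in \cref{eq:stokes-morgan-scott-gen-inf-sup}, so it is irrelevant.

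Next I would convert the bound $\|\bdd{\theta}\|_1 \leq C(\xi_{\mathcal{T}}^{-1}\|r\| + |\vec{\omega}|)$ supplied by \cref{lem:invert-rot-free-averages-discrete} into the claimed estimate in terms of $\vec{\kappa}$ alone. By Cauchy--Schwarz, $|\omega_N| \leq |\Omega|^{1/2}\|r\| + |\vec{\kappa}|$, whence $|\vec{\omega}| = \sum_{i=1}^{N}|\omega_i| \leq 2|\vec{\kappa}| + |\Omega|^{1/2}\|r\|$. Since a shape-regular mesh has uniformly bounded vertex valence and each summand defining $\xi(\bdd{a})$ is at most $1$, the quantity $\xi_{\mathcal{T}}$ is bounded above by a constant depending only on the shape-regularity parameter; hence $\|r\| \leq C\,\xi_{\mathcal{T}}^{-1}\|r\|$, and we arrive at $\|\bdd{\theta}\|_1 \leq C(\xi_{\mathcal{T}}^{-1}\|r\| + |\vec{\kappa}|)$ with $C$ independent of $r$, $\vec{\kappa}$, $h$, $p$, and $\xi_{\mathcal{T}}$. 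This is the first assertion of the Corollary.

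Finally I would deduce the inf-sup estimate \cref{eq:stokes-morgan-scott-inf-sup-bound}. Fix $(r, \vec{\kappa}) \neq (0, \vec{0})$ and test the supremum in \cref{eq:stokes-morgan-scott-gen-inf-sup} with the $\bdd{\theta}$ just constructed. Since $\rot\bdd{\theta} = r$ and $(\unitvec{t}\cdot\bdd{\theta}, 1)_{\Gamma_f^{(i)}} = \kappa_i$, the numerator equals $\|r\|^2 + \sum_{i=1}^{N-1}\kappa_i^2$, which by equivalence of the $\ell^1$ and $\ell^2$ norms on $\mathbb{R}^{N-1}$ is bounded below by $c_N(\|r\| + |\vec{\kappa}|)^2$ for some $c_N > 0$. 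Combining with $(\|r\| + |\vec{\kappa}|)\|\bdd{\theta}\|_1 \leq C\,\xi_{\mathcal{T}}^{-1}(\|r\| + |\vec{\kappa}|)^2$ (again using that $\xi_{\mathcal{T}}$ is bounded above to absorb $|\vec{\kappa}|$ into $\xi_{\mathcal{T}}^{-1}|\vec{\kappa}|$) gives a Rayleigh quotient bounded below by $(c_N/C)\,\xi_{\mathcal{T}}$; taking the infimum over $(r,\vec{\kappa})$ yields $\beta_X \geq \beta_0\,\xi_{\mathcal{T}}$ with $\beta_0 := c_N/C$ independent of $h$, $p$, and $\xi_{\mathcal{T}}$.

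There is no deep obstacle here: the entire argument is bookkeeping built on \cref{lem:invert-rot-free-averages-discrete}. The only points meriting care are the passage from the $N$-dimensional compatibility constraint of \cref{lem:invert-rot-free-averages-discrete} to the $(N-1)$ free averages appearing in \cref{eq:stokes-morgan-scott-gen-inf-sup}, and the observation that $\xi_{\mathcal{T}}$ cannot blow up on shape-regular meshes, which is exactly what lets the $\|r\|$ contribution be absorbed into $\xi_{\mathcal{T}}^{-1}\|r\|$ so that the final constants are genuinely mesh-independent. The dependence of $c_N$, and hence of $\beta_0$, on $N$ is harmless since $N$ is a fixed feature of the boundary partition.
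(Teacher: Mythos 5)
Your proposal is correct and follows essentially the same route as the paper: choose $\omega_i = \kappa_i$ for $i \le N-1$ and $\omega_N$ to satisfy the compatibility condition \cref{eq:kappaij-constraint}, apply \cref{lem:invert-rot-free-averages-discrete}, and use $\xi_{\mathcal{T}} \leq C$ (shape regularity) to absorb the extra $\|r\|$ and $|\vec{\kappa}|$ contributions. The only difference is that you spell out the norm bookkeeping and the Rayleigh-quotient computation for \cref{eq:stokes-morgan-scott-inf-sup-bound}, which the paper leaves implicit; this is harmless, and the dependence of your constant on $N$ is consistent with the paper's remark that $\beta_X$ may depend on $N$.
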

\begin{proof}
	Let $r \in \rot \bdd{G}^{p-1}_{\Gamma}$ and $\vec{\kappa} \in \mathbb{R}^{N-1}$ be given. Using the notation in the statement of \cref{lem:invert-rot-free-averages}, we set
	\begin{align*}
		\omega_i := \kappa_i, \qquad 1 \leq i \leq N-1, \quad \text{and} \quad \omega_{N} := \int_{\Omega} r \ d\bdd{x} -\sum_{j=1}^{N-1} \kappa_{j}.
	\end{align*}
	The result now follows from \cref{lem:invert-rot-free-averages-discrete} and that $\xi_{\mathcal{T}} \leq C$ by shape regularity.
\end{proof}

\bibliographystyle{siamplain}

\begin{thebibliography}{10}
	
	\bibitem{AinCP22SCIP}
	{\sc M.~Ainsworth and C.~Parker}, {\em Statically condensed iterated penalty
		method for high order finite element discretizations of incompressible flow},
	SIAM J. Sci. Comput., to appear, \url{https://arxiv.org/abs/2301.01818}.
	
	\bibitem{AinCP19StokesI}
	{\sc M.~Ainsworth and C.~Parker}, {\em {M}ass conserving mixed
		\lowercase{$hp$}-{FEM} approximations to {S}tokes flow. {P}art
		\uppercase{\textup{i}}: {U}niform stability}, SIAM J. Numer. Anal., 59
	(2021), pp.~1218--1244, \url{https://doi.org/10.1137/20M1359109}.
	
	\bibitem{AinCP21LE}
	{\sc M.~Ainsworth and C.~Parker}, {\em Unlocking the secrets of locking:
		{Finite} element analysis in planar linear elasticity}, Comput. Methods Appl.
	Mech. Engrg., 395 (2022), p.~115034,
	\url{https://doi.org/10.1016/j.cma.2022.115034}.
	
	\bibitem{AlfredPiperSchumaker87}
	{\sc P.~Alfeld, B.~Piper, and L.~L. Schumaker}, {\em An explicit basis for
		{$C^1$} quartic bivariate splines}, SIAM J. Numer. Anal., 24 (1987),
	pp.~891--911, \url{https://doi.org/10.1137/0724058}.
	
	\bibitem{Amara02}
	{\sc M.~Amara, D.~Capatina-Papaghiuc, and A.~Chatti}, {\em Bending moment mixed
		method for the {K}irchhoff--{L}ove plate model}, SIAM J. Numer. Anal., 40
	(2002), pp.~1632--1649, \url{https://doi.org/10.1137/S0036142900379680}.
	
	\bibitem{Argyris68}
	{\sc J.~H. Argyris, I.~Fried, and D.~W. Scharpf}, {\em The {TUBA} family of
		plate elements for the matrix displacement method}, The Aeronautical Journal,
	72 (1968), pp.~701--709, \url{https://doi.org/10.1017/S000192400008489X}.
	
	\bibitem{Bramble83}
	{\sc J.~H. Bramble and R.~S. Falk}, {\em Two mixed finite element methods for
		the simply supported plate problem}, RAIRO Anal. Numer., 17 (1983),
	pp.~337--384, \url{https://doi.org/10.1051/m2an/1983170403371}.
	
	\bibitem{Brenner08}
	{\sc S.~C. Brenner and L.~R. Scott}, {\em The Mathematical Theory of Finite
		Element Methods}, vol.~15 of Texts in Applied Mathematics, Springer-Verlag,
	New York, 3rd~ed., 2008.
	
	\bibitem{Chen18}
	{\sc L.~Chen and X.~Huang}, {\em Decoupling of mixed methods based on
		generalized {H}elmholtz decompositions}, SIAM J. Numer. Anal., 56 (2018),
	pp.~2796--2825, \url{https://doi.org/10.1137/17M114587}.
	
	\bibitem{Ciarlet74}
	{\sc P.~Ciarlet and P.~Raviart}, {\em A mixed finite element method for the
		biharmonic equation}, in Mathematical Aspects of Finite Elements in Partial
	Differential Equations, C.~{de Boor}, ed., Academic Press, 1974,
	pp.~125--145, \url{https://doi.org/10.1016/B978-0-12-208350-1.50009-1}.
	
	\bibitem{Cia02}
	{\sc P.~G. Ciarlet}, {\em The Finite Element Method for Elliptic Problems},
	Stud. Math. Appl. 4, North Holland, Amsterdam, 1978,
	\url{https://doi.org/10.1137/1.9780898719208}.
	
	\bibitem{Clough65}
	{\sc R.~W. Clough and J.~L. Tocher}, {\em Finite element stiffness matricess
		for analysis of plate bending}, in Proceedings of the First Conference on
	Matrix Methods in Structural Mechanics, no.~AFFDL-TR-66-80, Wright Patterson
	Air Force Base, Ohio, 1965, pp.~515--546,
	\url{https://contrails.library.iit.edu/item/160951}.
	
	\bibitem{Veiga07}
	{\sc L.~B.~a. da~Veiga, J.~Niiranen, and R.~Stenberg}, {\em A family of {$C^0$}
		finite elements for {K}irchhoff plates {I}: {E}rror analysis}, SIAM J. Numer.
	Anal., 45 (2007), pp.~2047--2071, \url{https://doi.org/10.1137/06067554X}.
	
	\bibitem{Deboor93}
	{\sc C.~Deboor and R.-Q. Jia}, {\em A sharp upper bound on the approximation
		order of smooth bivariate pp functions}, J. Approx. Theory, 72 (1993),
	pp.~24--33, \url{https://doi.org/10.1006/jath.1993.1003}.
	
	\bibitem{Farrell22}
	{\sc P.~E. Farrell, A.~Hamdan, and S.~P. MacLachlan}, {\em A new mixed
		finite-element method for {$H^2$} elliptic problems}, Comput. Math. Appl.,
	128 (2022), pp.~300--319, \url{https://doi.org/10.1016/j.camwa.2022.10.024}.
	
	\bibitem{FortinGlow83}
	{\sc M.~Fortin and R.~Glowinski}, {\em Augmented Lagrangian Methods:
		Applications to the Numerical Solution of Boundary-value Problems}, Stud.
	Math. Appl. 15, North-Holland, Amsertdam, 1983.
	
	\bibitem{Gallistl17}
	{\sc D.~Gallistl}, {\em Stable splitting of polyharmonic operators by
		generalized {S}tokes systems}, Math. Comp., 86 (2017), pp.~2555--2577,
	\url{https://doi.org/10.1090/mcom/3208}.
	
	\bibitem{GiraultRaviart86}
	{\sc V.~Girault and P.~Raviart}, {\em Finite Element Methods for Navier-Stokes
		Equations: Theory and Algorithms}, Spring-Verlag, Berlin, 1986,
	\url{https://doi.org/10.1007/978-3-642-61623-5}.
	
	\bibitem{Glowinski84}
	{\sc R.~Glowinski}, {\em Numerical Methods for Nonlinear Variational Problems},
	Spring-Verlag, New York, 1984,
	\url{https://doi.org/10.1007/978-3-662-12613-4}.
	
	\bibitem{Grisvard92}
	{\sc P.~Grisvard}, {\em Singularities in Boundary Value Problems}, vol.~22 of
	Research Notes in Applied Mathematics, Masson, Paris, 1992.
	
	\bibitem{skfem2020}
	{\sc T.~Gustafsson and G.~D. McBain}, {\em scikit-fem: A {P}ython package for
		finite element assembly}, Journal of Open Source Software, 5 (2020), p.~2369,
	\url{https://doi.org/10.21105/joss.02369}.
	
	\bibitem{FiredrakeUserManual}
	{\sc D.~A. Ham, P.~H.~J. Kelly, L.~Mitchell, C.~J. Cotter, R.~C. Kirby,
		K.~Sagiyama, N.~Bouziani, S.~Vorderwuelbecke, T.~J. Gregory, J.~Betteridge,
		D.~R. Shapero, R.~W. Nixon-Hill, C.~J. Ward, P.~E. Farrell, P.~D. Brubeck,
		I.~Marsden, T.~H. Gibson, M.~Homolya, T.~Sun, A.~T.~T. McRae, F.~Luporini,
		A.~Gregory, M.~Lange, S.~W. Funke, F.~Rathgeber, G.-T. Bercea, and G.~R.
		Markall}, {\em Firedrake User Manual}, Imperial College London and University
	of Oxford and Baylor University and University of Washington, first
	edition~ed., 5 2023, \url{https://doi.org/10.25561/104839}.
	
	\bibitem{FreeFEM}
	{\sc F.~Hecht}, {\em New development in {F}ree{F}em++}, J. Numer. Math., 20
	(2012), pp.~251--265, \url{https://freefem.org/}.
	
	\bibitem{MorganScott75}
	{\sc J.~Morgan and R.~Scott}, {\em A nodal basis for {$C^1$} piecewise
		polynomials of degree $n \geq 5$}, Math. Comp., 29 (1975), pp.~736--740,
	\url{https://doi.org/10.1090/S0025-5718-1975-0375740-7}.
	
	\bibitem{Nixonhill23}
	{\sc R.~W. Nixon-Hill, D.~Shapero, C.~J. Cotter, and D.~A. Ham}, {\em
		Consistent point data assimilation in firedrake and icepack}, 2023,
	\url{https://arxiv.org/abs/2304.06058}.
	
	\bibitem{Rafetseder18}
	{\sc K.~Rafetseder and W.~Zulehner}, {\em A decomposition result for
		{K}irchhoff plate bending problems and a new discretization approach}, SIAM
	J. on Numer. Anal., 56 (2018), pp.~1961--1986,
	\url{https://doi.org/10.1137/17M11184}.
	
	\bibitem{ScottVog84}
	{\sc L.~R. Scott and M.~Vogelius}, {\em Conforming finite element methods for
		incompressible and nearly incompressible continua}, in Large-Scale
	Computations in Fluid Mechanics, Part 2, Lectures in Appl. Math. 22, AMS,
	Providence, RI, 1985, pp.~221--244,
	\url{https://apps.dtic.mil/sti/citations/ADA141117}.
	
	\bibitem{ScottVog85}
	{\sc L.~R. Scott and M.~Vogelius}, {\em Norm estimates for a maximal right
		inverse of the divergence operator in spaces of piecewise polynomials}, ESAIM
	Math. Model. Numer. Anal., 19 (1985), pp.~111--143,
	\url{https://doi.org/10.1051/m2an/1985190101111}.
	
	\bibitem{Scott90}
	{\sc L.~R. Scott and S.~Zhang}, {\em Finite element interpolation of nonsmooth
		functions satisfying boundary conditions}, Math. Comp., 54 (1990),
	pp.~483--493, \url{https://doi.org/10.1090/S0025-5718-1990-1011446-7}.
	
	\bibitem{Timoshenko59}
	{\sc S.~Timoshenko and S.~Woinowsky-Krieger}, {\em Theory of Plates and
		Shells}, Engineering Societies Monograph, McGraw-Hill, New York, 2~ed., 1959.
	
	\bibitem{Vogelius83divinv}
	{\sc M.~Vogelius}, {\em A right-inverse for the divergence operator in spaces
		of piecewise polynomials}, Numer. Math., 41 (1983), pp.~19--37,
	\url{https://doi.org/10.1007/BF01396303}.
	
	\bibitem{vonmises13}
	{\sc R.~von Mises}, {\em Mechanik der festen k{\"o}rper im
		plastisch-deformablen zustand}, Nachrichten von der Gesellschaft der
	Wissenschaften zu G{\"o}ttingen, Mathematisch-Physikalische Klasse, 1913
	(1913), pp.~582--592, \url{http://eudml.org/doc/58894}.
	
	\bibitem{zenodo/Firedrake-20231103.0}
	{\em {Software used in `Computing H2-conforming finite element approximations
			without having to implement C1-elements'}}, Nov 2023,
	\url{https://doi.org/10.5281/zenodo.10070565},
	\url{https://doi.org/10.5281/zenodo.10070565}.
	
\end{thebibliography}

\end{document}